\tikzset{knot diagram/every knot diagram/.style={background color=gray!20,clip width=6,end tolerance=5pt,clip radius=0.2cm}}
\tikzset{edge/.style={line width=0.8}}
\tikzset{wall/.style={very thick}}
\newcommand{\picmargin}{\mathop{}\!}
\newcommand{\stsize}{\footnotesize}
\newcommand{\TanglePic}[5]{
\picmargin
\begin{tikzpicture}[baseline=(ref.base)]
\tikzmath{\xw=#1; \yh=#2; \yd=0; \yu=0;}
\ifthenelse{\equal{#3}{<-}\OR\equal{#4}{<-}}{\tikzmath{\yd=-0.1;}}{}
\ifthenelse{\equal{#3}{->}\OR\equal{#4}{->}}{\tikzmath{\yu=0.1;}}{}
\tikzmath{\yt=\yh+\yu;}
\fill[gray!20] (0,\yd)rectangle(\xw,\yt);
\node(ref) at ({\xw/2},{\yh/2}) {\phantom{$-$}};
\begin{scope}[wall]
\ifthenelse{\equal{#3}{w}}{
	\draw (0,\yd) --(0,\yt);
}{\ifthenelse{\equal{#3}{}}{}{
	\draw[#3] (0,\yd) --(0,\yt);
}}
\ifthenelse{\equal{#4}{w}}{
	\draw (\xw,\yd) --(\xw,\yt);
}{\ifthenelse{\equal{#4}{}}{}{
	\draw[#4] (\xw,\yd) --(\xw,\yt);
}}
\end{scope}
#5
\end{tikzpicture}
\picmargin
}
\newcommand{\HorizontalTangle}[5]{
\TanglePic{0.9}{0.9}{#1}{#2}{
\tikzmath{\ya=\yh/2+0.2; \yb=\yh/2-0.2;}
\path (0,\ya) coordinate (C) (0,\yb) coordinate (D);
\path (\xw,\ya) coordinate (A) (\xw,\yb) coordinate (B);
\ifthenelse{\equal{#3}{}\AND\equal{#4}{}}{}{
	\draw[right,inner sep=2pt] (A)node{\stsize #3} (B)node{\stsize #4};
}
#5
}}
\newcommand{\crosswall}[4]{
\HorizontalTangle{}{#1}{#3}{#4}{
\ifthenelse{\equal{#3}{a}}{
	\draw[edge] (C) ..controls ({\xw/2},\ya) and ({\xw/2},\yb).. (B);
	\draw[edge] (D) ..controls ({\xw/2},\yb) and ({\xw/2},\ya).. (A);
}{
	\begin{knot}
	\strand[edge] (C) ..controls ({\xw/2},\ya) and ({\xw/2},\yb).. (B);
	\strand[edge] (D) ..controls ({\xw/2},\yb) and ({\xw/2},\ya).. (A);
	\ifthenelse{\(\equal{#2}{n}\)}{\flipcrossings{1}}{}
	\end{knot}
}
\path[edge] (C) ..controls ({\xw/2},\ya) and ({\xw/2},\yb).. (B);
\path[edge] (D) ..controls ({\xw/2},\yb) and ({\xw/2},\ya).. (A);
}}
\newcommand{\twowall}[3]{
\HorizontalTangle{}{#1}{#2}{#3}{
\draw[edge] (C) -- (A);
\draw[edge] (D) -- (B);
}}
\newcommand{\circlediag}{\TanglePic{0.9}{0.9}{}{}{\draw[edge] (0.45,0.45) circle (0.25);}}
\newcommand{\capwall}[5][]{
\HorizontalTangle{}{#2}{#4}{#5}{
\draw[edge] (\xw,\yb) ..controls (0.1,\yb) and (0.1,\ya).. (\xw,\ya);
}}
\newcommand{\capnearwall}{
\HorizontalTangle{}{w}{}{}{
\draw[edge] (0,\yb) ..controls (0.7,\yb) and (0.7,\ya).. (0,\ya);
}}
\providecommand{\abs}[1]{{|{#1}|}}
\DeclareMathOperator{\lead}{lt}
\DeclareMathOperator{\order}{ord}
\DeclareMathOperator{\coker}{coker}
\DeclareMathOperator{\image}{im}
\DeclareMathOperator{\vol}{vol}
\DeclareMathOperator{\dego}{\deg^\circ}
\DeclareMathOperator{\maxspec}{MaxSpec}
\newcommand{\ints}{\mathbb{Z}}
\newcommand{\nats}{\mathbb{N}}
\newcommand{\reals}{\mathbb{R}}
\newcommand{\cx}{\mathbb{C}}
\newcommand{\surface}{\mathfrak{S}}
\newcommand{\surclose}{\overline{\surface}}
\newcommand{\marked}{\mathcal{P}}
\newcommand{\peripheral}{\mathring{\marked}}
\newcommand{\perialg}{R[\peripheral]}
\newcommand{\periext}{\perialg^\diamond}
\newcommand{\quasi}{\mathcal{E}}
\newcommand{\face}{\mathcal{F}}
\newcommand{\extY}{\bar{\mathcal{Y}}}
\newcommand{\rmmatch}{\mathrm{ma}}
\newcommand{\skein}{\mathscr{S}}
\newcommand{\matchedS}{\skein^\rmmatch}
\newcommand{\evenS}{\skein^\mathrm{ev}}
\newcommand{\balLambda}{\langle\Lambda\rangle}
\newcommand{\centV}{\mathcal{Z}}
\newcommand{\term}[1]{\textbf{#1}}
\newtheorem{theorem}{Theorem}[section]
\newtheorem*{thm}{Theorem}
\newtheorem{lemma}[theorem]{Lemma}
\newtheorem{proposition}[theorem]{Proposition}
\newtheorem{corollary}[theorem]{Corollary}
\theoremstyle{remark}
\newtheorem*{remark}{Remark}
\renewcommand*{\fps@figure}{htb}
\begin{document}

\title{Center of the Stated Skein Algebra}
\author[Tao Yu]{Tao Yu}
\address{Shenzhen International Center for Mathematics, Southern University of Science and Technology, 1088 Xueyuan Avenue, Shenzhen, Guangdong, China}
\email{yut6@sustech.edu.cn}


\begin{abstract}
The stated skein algebra is a generalization of the Kauffman bracket skein algebra introduced in the study of quantum trace maps. When the quantum parameter is a root of unity, the stated skein algebra has a big center and is finitely generated as a module over the center. We give the center a simple description and calculate the dimension over center of the stated skein algebra.
\end{abstract}

\maketitle

\section{Introduction}

Let $R$ be a commutative domain with an invertible element $q^{1/2}$. The Kauffman bracket skein algebra $\mathring\skein_q(\surface)$ of a surface $\surface$, introduced by Przytycki \cite{Pr} and Turaev \cite{Tu}, is an $R$-algebra spanned by framed unoriented links in the thickened surface $\surface\times(-1,1)$ modulo the Kauffman bracket relations
\begin{equation*}
\TanglePic{1.1}{0.9}{}{}{\tikzmath{\xl=0.1;\xr=\xw-\xl;}
\begin{knot}
\strand[edge] (\xr,\yt)--(\xl,\yd); \strand[edge] (\xl,\yt)--(\xr,\yd);
\end{knot}}
=q\TanglePic{1.1}{0.9}{}{}{
\foreach \x in {0.1,\xw-0.1} \draw[edge] (\x,\yt)..controls (ref)..(\x,\yd);}
+q^{-1}\TanglePic{1.1}{0.9}{}{}{ \tikzmath{\xl=0.1;\xr=\xw-\xl;}
\foreach \y in {\yt,\yd} \draw[edge] (\xl,\y)..controls (ref)..(\xr,\y);},
\qquad
\circlediag=(-q^2-q^{-2})\TanglePic{0.9}{0.9}{}{}{}.
\end{equation*}
The Kauffman bracket skein algebra is connected to many areas in low dimensional topology. Quantizations of character varieties \cite{Bul,BFK,PS} and Teichm\"uller spaces \cite{BWqtr,Mu} are a few examples.

In \cite{LeTriang}, a generalization called the stated skein algebra $\skein_q(\surface)$. The surface here is of the form $\surface=\surclose\setminus\marked$, where $\surclose$ is an oriented compact surface, and $\marked$ is a finite set with at least one point in each boundary component of $\surclose$. The stated skein algebra is spanned by framed tangles with endpoint on the boundary of $\surface$, satisfying the addition relations given in Section~\ref{sec-stateS}. The original application was to give a simple construction of the quantum trace map, first defined by Bonahon and Wong \cite{BWqtr}. More related results have been developed since. See e.g. \cite{CL,Fa,Kor}.

We are interested in the center and dimension over center of the skein algebras. They have important applications in the representation theory. Suppose $A$ is a finitely generated $\cx$-algebra and $Z$ is the center of $A$. Any irreducible representation of $A$ restricts to a representation of $Z$ and, by Schur's lemma, defines an algebra homomorphism $Z\to\cx$. This is a point in the max spectrum $\centV:=\maxspec Z$. Further assume that $A$ is a domain and finitely generated as a $Z$-module. 
Let $\tilde{Z}$ be the field of fractions of $Z$, and define the dimension of $A$ over $Z$ as $\dim_{\tilde{Z}}(A\otimes_Z\tilde{Z})$. This dimension is always a square, and its square root is called the PI degree of $A$.

\begin{thm}[See e.g. \cite{BGQGroup,BY,DP}]
Every point in $\centV$ correspond to at least one irreducible representation of $A$. Let $M$ be the PI degree of $A$. Every irreducible representation of $A$ has dimension at most $M$. The points in $\centV$ that correspond to $M$-dimensional irreducible representations form an open subset $U\subset\centV$, called the Azumaya locus of $A$. Moreover, the correspondence is one-to-one on $U$.
\end{thm}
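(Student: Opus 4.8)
The plan is to deduce the statement from three classical facts about prime affine PI algebras, since the hypotheses say exactly that $A$ is a domain, affine over $\cx$, and finitely generated as a module over its center $Z$, hence a prime PI algebra. First, Posner's theorem gives that $Q:=A\otimes_Z\tilde{Z}$ is a central simple algebra over $\tilde{Z}$; since central simple algebras have square dimension, this explains why $\dim_{\tilde{Z}}Q$ is a perfect square, and we write $M$ for its square root, the PI degree of $A$. Second, for the uniform bound I would use that an irreducible representation $\rho\colon A\to\operatorname{End}_\cx(V)$ has kernel a primitive ideal $P$, so $A/P$ is a primitive affine PI algebra; by Kaplansky's theorem $A/P$ is a simple algebra that is finite-dimensional over its center, and that center is an affine $\cx$-algebra which is a field, hence equals $\cx$ by the Nullstellensatz, so $A/P\cong M_d(\cx)$ for some $d$. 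Since the polynomial identities satisfied by $M_M(\cx)$---which $A$ satisfies via the embedding $A\hookrightarrow Q$---pass to the quotient $A/P$, one gets $d\le M$, and hence $\dim_\cx V=d\le M$. This discussion also shows that every irreducible representation factors as $A\twoheadrightarrow A/\mathfrak{m}A\twoheadrightarrow M_d(\cx)$, where $\mathfrak{m}:=Z\cap P\in\centV$ is the central character of $\rho$.

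Next I would prove that the central character map from irreducible representations to $\centV$ is onto. Fix $\mathfrak{m}\in\centV$; as $A$ is finitely generated and faithful as a $Z$-module, Nakayama's lemma yields $\mathfrak{m}A\ne A$, so $\bar{A}_{\mathfrak{m}}:=A/\mathfrak{m}A=A\otimes_Z(Z/\mathfrak{m})$ is a nonzero finite-dimensional $\cx$-algebra. Any such algebra has an irreducible module $V$, and the composite $A\to\bar{A}_{\mathfrak{m}}\to\operatorname{End}_\cx(V)$ is an irreducible representation that kills $\mathfrak{m}$ but is not zero on $Z$, so its central character is the maximal ideal $\mathfrak{m}$ itself. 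Thus every point of $\centV$ supports at least one irreducible representation.

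The heart of the argument is the Azumaya locus $U=\{\mathfrak{m}\in\centV:\bar{A}_{\mathfrak{m}}\cong M_M(\cx)\}$. Since an algebra that is module-finite over a commutative local ring is Azumaya exactly when its special fiber is central simple over the residue field, $U$ is the locus where $A_{\mathfrak{m}}$ is Azumaya over $Z_{\mathfrak{m}}$ of rank $M^2$; being Azumaya is an open condition on $\operatorname{Spec}Z$, so $U$ is open, and it is nonempty because at the generic point $A\otimes_Z\tilde{Z}=Q$ is already Azumaya over the field $\tilde{Z}$. For $\mathfrak{m}\in U$, the algebra $\bar{A}_{\mathfrak{m}}\cong M_M(\cx)$ has a unique simple module, of dimension exactly $M$, so there is a unique irreducible representation with central character $\mathfrak{m}$, and it attains the maximal dimension. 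Conversely, the Artin--Procesi theorem identifies the Azumaya locus with the locus of points supporting an $M$-dimensional irreducible representation: $U$ is the non-vanishing locus in $\centV$ of (the image of) a central polynomial for $M_M(\cx)$ together with the discriminant of the reduced trace form of $Q$, and having an $M$-dimensional irreducible quotient at $\mathfrak{m}$ reflects precisely the nonvanishing of these. Together these show that $U$ is exactly the set of points carrying an $M$-dimensional irreducible representation and that the correspondence is one-to-one on $U$.

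The step I expect to be the genuine obstacle is the Artin--Procesi theorem: that Azumaya-ness is detected by the special fiber, that the Azumaya locus is open and nonempty, and above all that it coincides with the locus of points supporting a top-dimensional irreducible representation. This is where elementary commutative algebra does not suffice and one needs the theory of polynomial and trace identities of matrix algebras---existence of central polynomials, the trace-ring construction, and the resulting control over how the fibers $\bar{A}_{\mathfrak{m}}$ degenerate as $\mathfrak{m}$ leaves $U$. Everything else (Posner's theorem, Kaplansky's theorem, the PI-Nullstellensatz, and Nakayama's lemma) is standard, and the statement follows by assembling these inputs; detailed accounts are in \cite{BGQGroup,BY,DP}.
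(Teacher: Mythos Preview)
The paper does not prove this theorem; it is stated as background and attributed to \cite{BGQGroup,BY,DP}. Your sketch is the standard argument found in those references---Posner plus Kaplansky plus the Nullstellensatz for the dimension bound and surjectivity, then Artin--Procesi for the Azumaya locus---so there is nothing to compare against and your outline is correct in substance.

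One small point worth tightening: when you argue that $U$ is nonempty because the generic fiber $Q$ is Azumaya, remember that the generic point of $\operatorname{Spec}Z$ is not in $\centV=\maxspec Z$. You need one extra sentence: since $Z$ is an affine $\cx$-algebra it is Jacobson, so any nonempty open subset of $\operatorname{Spec}Z$ meets $\maxspec Z$; hence the Azumaya locus, being open and containing the generic point, has nonempty intersection with $\centV$.
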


When the quantum parameter $q=\zeta$ is a root of unity, the Kauffman bracket skein algebra $A=\mathring{\skein}_\zeta(\surface)$ satisfy the assumptions above, as shown in \cite{FKLUnicity}. The set of peripheral curves, denoted $\peripheral$, is central for all $q$. Additional central elements are given by (a subalgebra of) the image of the Frobenius homomorphism. These elements generate the center. Moreover, $\mathring{\skein}_\zeta(\surface)$ is finitely generated over its center, and the dimension over center is calculated in \cite{FKLDimension}. See also \cite{BWi,BWii,BWiii}.

In this paper, we determine the center and the dimension over the center for the stated skein algebra $\skein_q(\surface)$, which was announced in \cite{LYSurvey}. We assume that $\surface$ is connected with nonempty boundary. The stated skein algebra for surfaces with empty boundary reduces to the Kauffman bracket skein algebra, so existing results apply.

Suppose $\zeta$ is a primitive $n$-th root of unity. Let $d=\gcd(n,4)$, $N=n/d$, and $\epsilon=\zeta^{N^2}$. \cite{BLFrob} defines the Frobenius homomorphism for the stated skein algebra
\[\Phi_\zeta:\skein_\epsilon(\surface)\to\skein_\zeta(\surface),\]
which is an algebra embedding.

\begin{thm}[{Theorems~\ref{thm-center-root-1} and \ref{thm-dim-z}}]
\begin{enumerate}
\item If $n$ is odd, the center of $\skein_\zeta(\surface)$ is the subalgebra generated by the image of $\Phi_\zeta$, the peripheral curves, and a finite set of central elements $B_\zeta$ (defined in Lemma~\ref{lemma-central-boundary}).
\item For all roots of unity, the center is spanned by elements $\gamma$ such that $\gamma\beta$ is of the form $c\Phi_\zeta(\gamma')$ where $c$ is a polynomial of the peripheral curves, $\gamma'\in\skein_\epsilon(\surface)$ belongs to a subalgebra $X_\zeta$ (defined in Corollary~\ref{cor-Xzeta}), and $\beta$ is a product of elements in $B_\zeta$.
\item The dimension of $\skein_\zeta(\surface)$ over its center is
\[D_\zeta=\begin{cases}
N^{\abs{\bar{\quasi}}-b_2},&d=1,\\
2^{2\lfloor\frac{v-1}{2}\rfloor}N^{\abs{\bar{\quasi}}-b_2},&d=2,\\
2^{2g+2v-2}N^{\abs{\bar{\quasi}}-b_2},&d=4,
\end{cases}\]
where $g$ is the genus of $\surface$, $v\ge 1$ is the number of points of $\marked$ on the boundary $\partial\surclose$, $\abs{\bar{\quasi}}=3(v-\chi(\surclose))+2\abs{\peripheral}$, and $b_2$ is the number of boundary components of $\surclose$ containing an even number of points of $\marked$.
\end{enumerate}
\end{thm}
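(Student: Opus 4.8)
The plan is to tackle the three parts in sequence, with the representation-theoretic input (the cited Theorem on PI degree and Azumaya loci) playing the role of a black box throughout. For part (1), the strategy is to show both inclusions. The easy direction is that the listed generators — the image of $\Phi_\zeta$, the peripheral curves $\peripheral$, and the elements of $B_\zeta$ — are all central; centrality of $\Phi_\zeta$'s image should follow from the defining property of the Frobenius homomorphism (it lands in the center when $n$ is odd, mirroring the Kauffman bracket case in \cite{BLFrob}), centrality of peripheral curves is classical, and centrality of $B_\zeta$ is Lemma~\ref{lemma-central-boundary}. The hard direction, that these exhaust the center, should be carried out by a filtration/leading-term argument: pick a basis of $\skein_\zeta(\surface)$ adapted to a suitable ordering (e.g. by a multidegree coming from a triangulation or from the quantum torus picture underlying the quantum trace), compute the commutator of a central element with the standard generators, and show the leading term is forced to be a monomial in $\peripheral$ times a Frobenius image times a $B_\zeta$-product. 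This is essentially the stated-skein analogue of the argument in \cite{FKLUnicity}, adapted to handle boundary states.

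For part (2), the general root-of-unity case, the obstruction is that when $d=\gcd(n,4)>1$ the Frobenius image is no longer literally central — there are sign/$\epsilon$ twists. The plan is to pass to the localization by the (central, non-zero-divisor) peripheral curves and the $B_\zeta$-elements, where one can clear denominators: a central $\gamma$ becomes, after multiplying by an appropriate $\beta\in\langle B_\zeta\rangle$ and a peripheral polynomial $c$, an element of the form $c\,\Phi_\zeta(\gamma')$ with $\gamma'$ constrained to lie in the subalgebra $X_\zeta$ of Corollary~\ref{cor-Xzeta}. The constraint $\gamma'\in X_\zeta$ comes from analyzing which elements of $\skein_\epsilon(\surface)$ have central Frobenius image — this is exactly the sub-algebra on which $\Phi_\zeta$ remains central after the twist. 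I would set this up using the quantum-torus model for the stated skein algebra near a triangulation, where $X_\zeta$ should be cut out by explicit congruence conditions on exponent vectors.

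Part (3) is where the real computation lives, and I expect the dimension count to be the main obstacle. The approach: the dimension over the center equals $\dim_{\tilde Z}(\skein_\zeta(\surface)\otimes_Z \tilde Z)$, and by the general PI theory this is the index of the center inside a maximal commutative subalgebra, computable as a Pfaffian-type quantity attached to the Poisson/commutation form. Concretely, the stated skein algebra has a quantum-torus-like associated graded (via a triangulation), so the PI degree is governed by the cokernel of an antisymmetric integer matrix $B$ recording the $q$-commutation exponents; $\dim = \sqrt{\abs{\det}}$ on the non-degenerate part, refined by $N$ versus $2$ contributions according to $d$. I would: (i) identify the relevant lattice and the form $B$ from the triangulated picture, with $\abs{\bar\quasi} = 3(v-\chi(\surclose)) + 2\abs{\peripheral}$ counting the generators (edge and boundary coordinates of the quantum trace); (ii) compute the Smith normal form of $B$, isolating the rank-$b_2$ kernel contributed by the even-boundary components (these give peripheral-type relations that do not contribute to the dimension, hence the exponent $\abs{\bar\quasi} - b_2$); (iii) handle the $d=2$ and $d=4$ cases separately, where the extra factors $2^{2\lfloor(v-1)/2\rfloor}$ and $2^{2g+2v-2}$ arise from the $2$-torsion in the cokernel caused by the $\zeta^{N^2}=\epsilon=\pm 1, \pm i$ sign subtleties — these match the analogous corrections in \cite{FKLDimension} for the closed case but must be recomputed with the boundary data present. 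The bookkeeping of the even-boundary correction $b_2$ and the $d$-dependent $2$-power factors, making sure no overcounting happens at the seams between triangles, is the delicate part; everything else is linear algebra over $\ints$ once the matrix $B$ is correctly written down.
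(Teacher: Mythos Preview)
Your outline for parts (1) and (2) matches the paper's strategy closely: centrality of the listed generators is established via an explicit commutation formula (Lemma~\ref{lemma-Frob-comm} gives $\Phi_\zeta(\alpha)\beta=\zeta^{c(\alpha,\beta)N}\beta\Phi_\zeta(\alpha)$, from which Corollary~\ref{cor-Xzeta} and Lemma~\ref{lemma-central-boundary} follow), and the reverse inclusion is a leading-term descent using the filtration coming from the quantum trace embedding. The paper does not literally localize at $B_\zeta$; the formulation with $B_\zeta^{-1}$ is just a compact way to package the output of the leading-term argument (Lemma~\ref{lemma-lt-cancel}), which solves the $\bmod\,n$ linear system $\bar K\bar Q\bar k\equiv 0$ for the degree $\bar k$ of a central element and constructs $\gamma$, $\beta$, $\gamma'$ directly, case by case in $d$.

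For part (3) there is a genuine gap in your plan. You propose to read the dimension off from the Smith normal form of the commutation matrix on the associated graded, but the stated skein algebra is only a \emph{subalgebra} of the quantum torus $\extY$ (its associated graded is a monoid algebra over the cone $\Lambda$, not the full group algebra over $\balLambda$), and neither PI degree nor dimension over center is automatically preserved under such an embedding or under passage to associated graded. The paper closes this gap by invoking the machinery of \cite{FKLDimension} (their Corollary~5.2 and Theorem~6.1): one checks that the degree map $\deg_\zeta:\skein_\zeta(\surface)\setminus\{0\}\to R_\zeta:=\balLambda/\Lambda_\zeta$ is surjective (Lemma~\ref{lemma-deg-sur}), where $\Lambda_\zeta$ is the lattice of balanced solutions to $\bar K\bar Q\bar k\equiv 0\pmod n$, and then their filtration argument yields $\dim_Z\skein_\zeta(\surface)=\lvert R_\zeta\rvert$ on the nose. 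Your Smith-normal-form computation is a legitimate alternative to the paper's case-by-case count of $\lvert R_\zeta\rvert$ (Lemmas~\ref{lemma-balanced-index}--\ref{lemma-bdry-summand} and the lemmas following), but without the FKL step the linear algebra only produces the size of a lattice quotient, not the dimension you are after.
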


In \cite{KK}, corresponding results when the order $n$ of $\zeta$ is odd are obtained for a special case ($d=1$ with one point of $\marked$ on each boundary component) and for a quotient of $\skein_q(\surface)$ called the reduced skein algebra.

\section{Punctured bordered surfaces}

\subsection{Basic definitions}

A \term{(punctured bordered) surface} $\surface$ is a surface of the form $\surclose\setminus\marked$, where $\surclose$ is a compact oriented surface, and $\marked$ is a finite set of \term{ideal points} containing at least one point from each boundary component of $\surclose$. For convenience, we assume $\surface$ is connected and has nonempty boundary (so $\marked$ is nonempty as well).

The set of ideal points on the boundary is denoted $\marked_\partial:=\marked\cap\partial\surclose$, while the set of ideal points in the interior, also called punctures, is denoted $\peripheral:=\marked\cap\mathring{\surface}$.

An \term{ideal arc} of $\surface$ is an embedding $a:(0,1)\to\surface$ that extends to an immersion $\bar{a}:[0,1]\to\surclose$ with \term{endpoints} $\bar{a}(0),\bar{a}(1)$ in $\marked_\partial$. Note we do not allow ideal arcs to end on punctures. As usual, we identify an ideal arc with its image. Isotopies of ideal arcs are considered in the class of ideal arcs. If $\bar{a}(0)=\bar{a}(1)$ and $\bar{a}$ bounds a disk in $\surface$, $a$ is called a \term{trivial} ideal arc.

We say the surface $\surface$ is triangulable if it is not a monogon or bigon. A \term{(quasi)triangulation} of $\surface$ is a maximal collection $\quasi$ of nontrivial, pairwise non-isotopic ideal arcs called \term{edges}. Triangulation exists if $\surface$ is not a monogon. The bigon has a unique triangulation consisting of one edge connecting the ideal points. However, the combinatorics of the bigon triangulation is very different from the other surfaces, so we consider the bigon as exceptional. 

For each boundary component of $\surface$, there is an edge of $\quasi$ isotopic to it. Such an edge is called \term{boundary}, and we always assume it is exactly on the boundary of $\surface$. The other edges are called \term{interior}. The collection of boundary edges are denoted $\quasi_\partial$.

The interior edges of a triangulation cut a triangulable surface into a disjoint union of triangles and punctured monogons. Let $\face(\quasi)$ denote these components, which are called the \term{faces} of the triangulation. Since arcs must end on the boundary of $\surclose$, each triangle has three distinct edges.

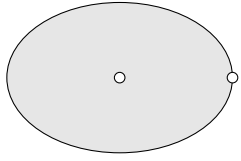
\begin{figure}
\centering
\begin{tikzpicture}
\draw[fill=gray!20] (0,0)circle[x radius=1.5,y radius=1];
\draw[fill=white] (0,0)circle(2pt) (1.5,0)circle(2pt);
\end{tikzpicture}
\caption{A punctured monogon}\label{punctured-monogon}
\end{figure}

\subsection{Matrices associated with a triangulation}

Following \cite{LYSL2}, define the following matrices associated to a triangulation $\quasi$ of a triangulable surface $\surface$.

The edges of a triangle are cyclically ordered. For each triangle $\tau\in\face(\quasi)$ and $a,b\in\quasi$, define 
\begin{equation}
Q_\tau(a,b)=\begin{cases}
1,&a\to b\text{ is clockwise},\\
-1,&a\to b\text{ is counterclockwise},\\
0,&\text{otherwise}.
\end{cases}
\qquad
Q=\sum_{\tau\in\face(\quasi)}Q_\tau.
\end{equation}
The matrix $Q$ is $Q_\quasi$ in \cite{LYSL2}.

Given an edge in $\quasi$, removing a point in the interior produces two \term{half-edges}. For each ideal point $v\in\marked_\partial$, if $a'$ and $b'$ are disjoint half-edges that meet at $v$, define
\begin{equation}
P'_{+,v}(a',b')=\begin{cases}
1,&b'\text{ is counterclockwise to }a',\\
0,&\text{otherwise}.
\end{cases}
\end{equation}
Given two edges $a,b\in\quasi$, isotope them so that they are disjoint. Let
\begin{equation}
P_{+,v}(a,b)=\sum P'_{+,v}(a',b'),\qquad
P_+=\sum_{v\in\marked}P_{+,v},
\end{equation}
where the first sum is over half-edges $a'$ of $a$ and half-edges $b'$ of $b$. The disjoint condition is essential for the definition to make sense when $a=b$.

Some examples are given in Figure~\ref{def-qp-illu}. 

\begin{figure}
\centering
\begin{subfigure}[b]{0.3\linewidth}
\centering
\begin{tikzpicture}[baseline=(C)]
\coordinate (A) at (-30:1.5);
\coordinate (B) at (90:1.5);
\coordinate (C) at (-150:1.5);
\draw[fill=gray!20] (A) to node[above right]{$a$} (B) to (C) to node[below]{$b$} (A);
\node (O) at (0,0){$\tau$};
\draw[fill=white] (A)circle(2pt) (B)circle(2pt) (C)circle(2pt);
\end{tikzpicture}
\subcaption{$Q_\tau(a,b)=1$}
\end{subfigure}
\begin{subfigure}[b]{0.3\linewidth}
\centering
\begin{tikzpicture}[baseline=0cm]
\node[below] (V) at (0,0){$v$};
\fill[gray!20!white] (1.5,0)--(1.5,1.7)--(-1.5,1.7)--(-1.5,0);
\draw (0,0) edge (0:1.5) edge node[above right,at end]{$a'$} (45:1.5) edge node[above left,at end]{$b'$} (135:1.5) edge (180:1.5) edge (90:1.5);
\draw[fill=white] (0,0) circle(2pt);
\end{tikzpicture}
\subcaption{$P'_{+,v}(a',b')=1$}
\end{subfigure}
\caption{Illustrations of the definitions of $Q_\tau$ and $P'_{+,v}$}
\label{def-qp-illu}
\end{figure}
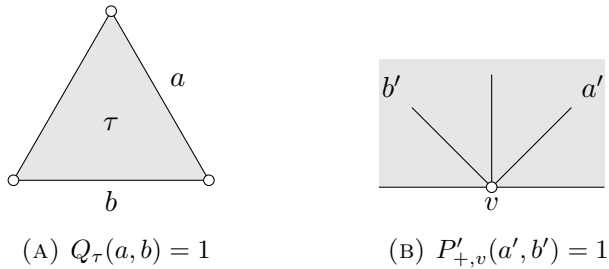

For a surface $\surface$ with a triangulation $\quasi$, there is a relation between the face matrix and the vertex matrix. Define a $\quasi\times\quasi_\partial$ matrix $J$ by
\begin{equation}
J(a,b)=\begin{cases}1,&a=b,\\0,&a\ne b.\end{cases}
\end{equation}

\begin{lemma}[{\cite[Lemma~A.1]{LYSL2}}]\label{lemma-H-inverse}
$P_+(JJ^T-Q)=2I$.
\end{lemma}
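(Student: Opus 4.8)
The plan is to verify the identity one matrix entry at a time. Fix edges $a,b\in\quasi$; we must show that $\sum_{c}P_+(a,c)\,(JJ^T-Q)(c,b)$ equals $2\delta_{ab}$. The key preliminary observation is that this sum is \emph{local} around $b$: the matrix $JJ^T$ is diagonal with value $1$ on each boundary edge and $0$ elsewhere, while $Q(c,b)=\sum_\tau Q_\tau(c,b)$ vanishes unless $c$ and $b$ are two distinct edges of a common triangular face. Hence $(JJ^T-Q)(c,b)=0$ unless $c=b$ (with $b$ a boundary edge) or $c$ is one of the at most four further edges of the one or two faces incident to $b$, so the entry depends only on the triangulation inside a small disk around $b$. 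It therefore suffices to check it in each of the finitely many local configurations.

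For the computation itself I would organize the $Q$-part as a telescoping sum, writing $P_+(a,c)=\sum_v P_{+,v}(a,c)$. For a triangular face $\tau$ incident to $b$, list its edges in clockwise cyclic order as $e_1\to e_2\to e_3$ with $b=e_2$; then $Q_\tau(e_1,b)=1$, $Q_\tau(e_3,b)=-1$ and $Q_\tau(b,b)=0$, so the contribution of $\tau$ to the entry is $-(P_+(a,e_1)-P_+(a,e_3))$. Splitting $P_+(a,e_1)$ and $P_+(a,e_3)$ into contributions at the three vertices of $\tau$, at the vertex of $\tau$ opposite $b$ the half-edges of $e_1$ and $e_3$ are consecutive, with the wedge between them equal to $\tau$, so $P'_{+}$ cannot separate them against any half-edge outside that wedge; hence in the difference $P_+(a,e_1)-P_+(a,e_3)$ this vertex contributes nothing, apart from a correction when $a$ equals $e_1$ or $e_3$. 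What survives lives at the two endpoints $v,w$ of $b$. When $b$ is an interior edge whose two faces are triangles, adding their two contributions makes the surviving terms at $v$ — and likewise at $w$ — involve only $b$ and its two immediate neighbours there; reading off the cyclic order of half-edges at that ideal point shows the net contribution is $1$ exactly when $a=b$ and $0$ otherwise, so the entry equals $2\delta_{ab}$. When $b$ is a boundary edge bordering a single triangle, the extra summand $P_+(a,b)\,JJ^T(b,b)=P_+(a,b)$ compensates for the missing second face, and the same count at $v,w$ again yields $2\delta_{ab}$.

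The delicate part, which I expect to be the main obstacle, is the precise bookkeeping underlying step two, namely the self-pairing terms and the degenerate local pictures. When $a$ coincides with $b$ or with one of its neighbouring edges, one cannot evaluate $P'_{+,v}$ on a repeated half-edge but must use the parallel push-off built into the definition of $P_+$, and the side to which one pushes fixes the sign of the contribution. The local picture may also degenerate: $b$ may be a loop, so its two endpoints coincide; $b$ may border a punctured monogon, in which case it is itself a loop around a puncture with two half-edges at a single ideal point; the two faces of $b$ may share a second edge; or a neighbour of $b$ in one face may coincide with one in the other; and the surface may be so small that it has no triangular face at all. (Self-folded triangles, by contrast, do not occur here, since every triangle of a quasitriangulation has three distinct edges — this removes a case that is otherwise troublesome.) Each such configuration remains a finite, purely local check; the only remaining care is to fix the orientation conventions consistently so that the constant is $+2$ rather than $-2$, and no global information beyond the existence of the quasitriangulation is used.
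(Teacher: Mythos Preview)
The paper does not actually prove this lemma; it is quoted from \cite[Lemma~A.1]{LYSL2} as a black box, so there is no in-paper argument to compare against.

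Your plan of an entry-by-entry local verification is the natural one, and the skeleton you describe --- telescoping $P_+(a,e_1)-P_+(a,e_3)$ at the vertex of $\tau$ opposite $b$, leaving only contributions at the two ends of $b$, and then matching the boundary term $P_+(a,b)\,JJ^T(b,b)$ against the missing second triangle --- is sound. Two points deserve care as you flesh it out. First, when $b$ bounds a punctured monogon, $b$ is an \emph{interior} loop, so $JJ^T(b,b)=0$ while there is still only one triangular face on the other side; the needed compensation must therefore come entirely from the fact that both half-edges of $b$ sit at the same vertex, not from a $JJ^T$ term, so this case is genuinely different from the ``boundary edge with one triangle'' situation you sketch. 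Second, your cancellation at the opposite vertex $w$ tacitly assumes that the only half-edges of $e_1$ and $e_3$ at $w$ are the adjacent pair bounding $\tau$; if $e_1$ or $e_3$ happens to be a loop based at $w$ it contributes a second half-edge there, and you must track that this extra term is correctly absorbed elsewhere. Neither issue is fatal --- each remains a finite local check, as you say --- but these two configurations are where a first pass is most likely to drop a stray $\pm 1$.
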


Let $\bar{\quasi}=\quasi\sqcup\hat\quasi_\partial$, where $\hat\quasi=\{\hat{e}\mid e\in\quasi_\partial\}$ is a second copy of $\quasi_\partial$. The extended version of $Q$ is a $\bar{\quasi}\times\bar{\quasi}$ block matrix with respect to the above decomposition of $\bar{\quasi}$.
\begin{equation}
\bar{Q}=\begin{pmatrix}Q&-J\\J^T&0\end{pmatrix}.
\end{equation}
The matrix $\bar{Q}$ differs from $\bar{Q}$ in \cite{LYSL2} by the signs on $J$ and $J^T$ because of the choice of generators of the quantum torus defined in Section~\ref{sec-qtr}.

\section{Stated Skein algebra}

\subsection{Stated tangles}

Given a punctured bordered surface $\surface$, a \term{tangle} $\alpha$ over $\surface$ is a compact $1$-dimensional smooth submanifold embedded in $\surface\times(-1,1)$ with a framing (normal vector field) such that
\begin{enumerate}
\item $\partial\alpha\subset\partial\surface\times(-1,1)$;
\item For each boundary edge $b$, the points in $\partial\alpha\cap(b\times(-1,1))$ have distinct heights;
\item The framing at each $x\in\partial\alpha$ is vertical, i.e., tangent to $\{x\}\times(-1,1)$.
\end{enumerate}
A \term{stated tangle} is a tangle $\alpha$ equipped with a \term{state} map $\partial\alpha\to\{+,-\}$. An isotopy of stated tangles is a homotopy through stated tangles. In particular, states and height ordering on a boundary edge are preserved. By convention, the empty set is included as a stated tangle.

When representing tangles by diagrams, height ordering of endpoints on a boundary edge is represented by an arrow, indicating that the heights are increasing as one follow the direction of the arrow. Any tangle can be isotoped to have such a projection. If the arrows on all boundary edges agree with the orientation induced from the surface, then the diagram is called \term{positively ordered}. All diagrams in this paper are positively ordered.

\subsection{Stated skein algebra}
\label{sec-stateS}

Let $R$ be an integral domain with an invertible element $q^{1/2}$. The main example is $R=\cx$ with $q^{1/2}$ a nonzero complex number. The \term{stated skein algebra} $\skein_q(\surface)$ is the $R$-module generated by isotopy classes of stated tangles modulo the following relations.
\begin{align}
&\text{Skein relation:}&
&\TanglePic{1.1}{0.9}{}{}{\tikzmath{\xl=0.1;\xr=\xw-\xl;}
\begin{knot}
\strand[edge] (\xr,\yt)--(\xl,\yd); \strand[edge] (\xl,\yt)--(\xr,\yd);
\end{knot}}
=q\TanglePic{1.1}{0.9}{}{}{
\foreach \x in {0.1,\xw-0.1} \draw[edge] (\x,\yt)..controls (ref)..(\x,\yd);}
+q^{-1}\TanglePic{1.1}{0.9}{}{}{ \tikzmath{\xl=0.1;\xr=\xw-\xl;}
\foreach \y in {\yt,\yd} \draw[edge] (\xl,\y)..controls (ref)..(\xr,\y);}.\label{eq-defrel1}\\
&\text{Trivial loop relation:}&
&\circlediag=(-q^2-q^{-2})\TanglePic{0.9}{0.9}{}{}{}.\\
&\text{State exchange relation:}&
&\twowall{->}{$-$}{$+$}=q^2\twowall{->}{$+$}{$-$}+q^{-1/2}\capnearwall.\label{eq-defrel3}\\
&\text{Returning arc relations:}&
&\capwall{->}{right}{$+$}{$+$}=\capwall{->}{right}{$-$}{$-$}=0,\qquad
\capwall{->}{right}{$+$}{$-$}=q^{-1/2}\TanglePic{0.9}{0.9}{}{w}{}.\label{eq-defrel4}
\end{align}
The product of two stated tangles is defined by stacking $[L_1][L_2]=[i_+(L_1)\cup i_-(L_2)]$ where $i_\pm:M\to M$ are the embeddings given by $i_\pm(x,t)=(x,\frac{t\pm1}{2})$. This extends linearly to a well-defined product.

The height exchange moves
\begin{equation}\label{eq-height-ex}
\crosswall{->}{n}{$\mu$}{$+$}=q^{-\mu}\twowall{->}{$+$}{$\mu$},\qquad
\crosswall{->}{n}{$-$}{$\mu$}=q^{\mu}\twowall{->}{$\mu$}{$-$}
\end{equation}
are consequences of the defining relations. Here we identify the states $\pm$ with $\pm1$. They are also equivalent to the state exchange relation assuming the other defining relations.

\subsection{Exceptional surfaces}

We have assumed throughout that $\surface$ has nonempty boundary. If $\surface$ has empty boundary, tangles can no longer have endpoints, and the boundary relations \eqref{eq-defrel3} and \eqref{eq-defrel4} are vacuous. The stated skein algebra reduces to the Kauffman bracket skein algebra.

If $\surface$ is a monogon, it is easy to see that all tangles can be reduced to the empty tangle using the defining relations. By Theorem~\ref{thm-basis}, the empty tangle is a basis, so the stated skein algebra is simply $R$.

If $\surface$ is a bigon, the stated skein algebra is isomorphic to the quantum coordinate ring $\mathcal{O}_{q^2}(SL_2)$. Additional structures can be defined so that the isomorphism preserves the cobraided Hopf algebra structure. See \cite[Section~3]{CL}. The representation theory is well studied. For results on the center and the PI degree, see \cite[Chapter~III.3]{BGQGroup} and \cite[Appendix]{DL} for $q^2$ an odd root of unity.

\subsection{Basis}

A stated tangle diagram is \term{simple} if there are no crossings, \term{essential} if there are no components homotopic (rel endpoints) to a point or part of a boundary edge, and \term{increasingly stated} if on each boundary edge, the endpoints with the $+$ state are higher than the endpoints with the $-$ state. 

\begin{theorem}[{\cite[Theorem~2.8]{LeTriang}}]\label{thm-basis}
As an $R$-module, $\skein_q(\surface)$ is free. A basis is given by the equivalence classes of increasingly stated, positively ordered, simple essential stated tangle diagrams.
\end{theorem}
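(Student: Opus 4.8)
Since the statement is quoted from \cite{LeTriang}, I only outline the argument. It has two halves: the indicated diagrams \emph{span} $\skein_q(\surface)$ over $R$, and they are \emph{linearly independent}.

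\emph{Spanning} is algorithmic. Given an arbitrary stated tangle, isotope its diagram to be positively ordered. Apply the skein relation~\eqref{eq-defrel1} repeatedly to remove all crossings; this terminates by the usual induction on crossing number, since each resolution strictly decreases it. The trivial loop relation then deletes each contractible closed component up to a scalar in $R$. Any component homotopic rel endpoints to a sub-arc of a boundary edge is eliminated using the returning arc relations~\eqref{eq-defrel4} --- after isotoping it to a small returning arc, it is either removed (lowering the number of boundary endpoints by two) or shown to vanish. At this stage the class is an $R$-combination of positively ordered simple essential diagrams. Finally, on each boundary edge, use the state exchange relation~\eqref{eq-defrel3} (equivalently the height exchange moves~\eqref{eq-height-ex}) to push the $+$-stated endpoints above the $-$-stated ones: each innermost swap replaces a diagram by the reordered diagram (same geometry, fewer state inversions) plus a correction term in which two endpoints are capped off, hence with strictly fewer boundary endpoints. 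A well-founded induction --- on the number of boundary endpoints, then on the number of crossings, then on the number of mis-ordered adjacent state pairs --- shows every class is an $R$-combination of increasingly stated, positively ordered, simple essential diagrams.

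\emph{Linear independence} is the substantial half, and I would follow the triangular-decomposition strategy of \cite{LeTriang}. Fix a triangulation $\quasi$ of $\surface$ and cut along all interior edges; this induces an algebra homomorphism
\[
\skein_q(\surface)\longrightarrow\bigotimes_{\tau\in\face(\quasi)}\skein_q(\tau),
\]
which one proves to be injective. The point is that a simple essential diagram, when cut, becomes a tensor of diagrams in the faces together with matching states along the cutting arcs; tracking this, and how~\eqref{eq-defrel3} produces the correction terms needed to bring each factor to standard form, one shows the image of a proposed basis element is ``triangular'' with respect to a suitable partial order on diagrams. Triangularity reduces independence over $\surface$ to independence of the proposed bases of the faces --- triangles and punctured monogons (Figure~\ref{punctured-monogon}) --- and those are established by an explicit computation of $\skein_q$ in each case. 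The monogon and the bigon are exceptional surfaces and are treated directly: $\skein_q$ of the monogon is $R$, while $\skein_q$ of the bigon is $\mathcal{O}_{q^2}(SL_2)$ (see \cite{CL}), under which the increasingly stated simple essential diagrams are exactly the standard monomial basis.

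The hard part is the injectivity of the splitting homomorphism and the compatibility of bases across it: one must control the combinatorics of how an essential diagram meets the cutting arcs and carefully account for the states created by~\eqref{eq-defrel3} along those arcs, and this is where the real content of the proof lies (the base-case computations for the triangle and the punctured monogon are nontrivial but self-contained). An alternative route to independence, available since $R$ is only assumed to be a domain, is to specialize $q^{1/2}\to 1$, invoke the corresponding classical statement for the stated skein module, and propagate it to generic $q$ by a filtration argument; but the $q=1$ input for surfaces with marked boundary is of comparable difficulty, so this does not really circumvent the main obstacle.
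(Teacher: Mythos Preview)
The paper does not give its own proof of this theorem: it is quoted verbatim from \cite[Theorem~2.8]{LeTriang} and used as a black box, so there is nothing in the present paper to compare your proposal against. Your outline is a faithful sketch of L\^e's original argument---spanning by an algorithmic reduction using the defining relations, and linear independence via the splitting homomorphism along a triangulation together with the explicit computation of $\skein_q$ for the elementary pieces---so as a summary of the cited proof it is accurate.
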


A \term{peripheral curve} is a simple closed curve that bounds a disk with one interior ideal point. Peripheral curve are central because they have no intersection with other diagrams up to isotopy, so their heights can be freely isotoped when stacked. Then the corollary below easily follows from Theorem~\ref{thm-basis}.

\begin{corollary}\label{cor-peri}
Let $X_v$ denote the peripheral curve around the interior ideal point $v\in\peripheral$. Then the polynomial algebra
\begin{equation}
\perialg:=R[X_v,v\in\peripheral]
\end{equation}
is an embedded subalgebra of the center of $\skein_q(\surface)$. Therefore, $\skein_q(\surface)$ is an $\perialg$-algebra.

As an $\perialg$-module, $\skein_q(\surface)$ is free with a basis $B$ given by $R$-basis elements of $\skein_q(\surface)$ with no peripheral components.
\end{corollary}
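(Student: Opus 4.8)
The plan is to deduce both assertions from the basis of Theorem~\ref{thm-basis}, the only genuinely geometric input being the description of a peripheral curve as the boundary of an arbitrarily small once-punctured disk around its puncture.

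First I would check that each $X_v$ is central. Fix an interior ideal point $v\in\peripheral$ and an arbitrary stated tangle $L$. The image of $L$ under the projection $\surface\times(-1,1)\to\surface$ is a compact subset of $\surface=\surclose\setminus\marked$, hence disjoint from some open neighborhood $U$ of $v$ with $\overline{U}\cap\marked=\{v\}$. Since $X_v$ is represented by some simple closed curve bounding a once-punctured disk around $v$, it can be isotoped so that this curve lies inside $U$. In both stackings $[X_v][L]$ and $[L][X_v]$ the copy of $X_v$ then lies in $U\times(-1,1)$ while $L$ lies in $(\surface\setminus U)\times(-1,1)$, so sliding $X_v$ through the height coordinate inside $U\times(-1,1)$, without ever meeting $L$, produces an isotopy between the two; thus $[X_v][L]=[L][X_v]$. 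Consequently every $X_v$ is central, the $X_v$ commute pairwise, and the evaluation map $\perialg=R[X_v:v\in\peripheral]\to\skein_q(\surface)$ is a well-defined $R$-algebra homomorphism with image in the center.

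Next I would show this homomorphism is injective. A monomial $\prod_{v}X_v^{k_v}$ is represented, up to isotopy, by the disjoint union over $v$ of $k_v$ parallel copies of the peripheral curve around $v$; this is a simple, positively ordered, unstated tangle diagram, and it is essential since a peripheral curve is homotopic neither to a point (it encircles a puncture) nor to part of a boundary edge (it lies in the interior). Distinct exponent vectors yield non-isotopic such diagrams, so by Theorem~\ref{thm-basis} these monomials belong to an $R$-basis of $\skein_q(\surface)$, hence are $R$-linearly independent; this is exactly injectivity of $\perialg\to\skein_q(\surface)$, so $\skein_q(\surface)$ is a $\perialg$-algebra. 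For the $\perialg$-freeness with basis $B$, start again from the $R$-basis of Theorem~\ref{thm-basis}: a basis diagram $D$ is simple, so its components are pairwise disjoint curves and arcs in $\surface$; split off the peripheral components, which amount to $k_v\ge 0$ parallel copies of $X_v$ for each $v$, and let $D_0$ be the union of the remaining components. Then $D_0$ is still increasingly stated, positively ordered, simple, essential, and has no peripheral components, so $[D_0]\in B$, and because $D$ is the disjoint union of $D_0$ with those peripheral curves the same height-sliding argument gives $[D]=\bigl(\prod_v X_v^{k_v}\bigr)[D_0]$ in $\skein_q(\surface)$. This sets up a bijection between the basis of Theorem~\ref{thm-basis} and the set of all products $\bigl(\prod_v X_v^{k_v}\bigr)[D_0]$ with $[D_0]\in B$.

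Hence the $\perialg$-module map $\bigoplus_{[D_0]\in B}\perialg\cdot[D_0]\to\skein_q(\surface)$ is surjective, since it hits every Theorem~\ref{thm-basis} basis element, and injective, since expanding any $\perialg$-linear relation into monomials yields an $R$-linear relation among the basis elements of Theorem~\ref{thm-basis}, forcing every coefficient to vanish; therefore $\skein_q(\surface)$ is free over $\perialg$ with basis $B$. I do not anticipate a serious obstacle, as the corollary essentially repackages Theorem~\ref{thm-basis}; the one point meriting care is the isotopy argument invoked twice above — that disjointness of the surface projections permits freely reordering heights when stacking — which is exactly where compactness of the tangle and the small-punctured-disk description of peripheral curves are used.
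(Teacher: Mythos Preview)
Your proposal is correct and follows the same line as the paper: centrality via isotoping the peripheral curve into a small punctured-disk neighborhood disjoint from the given tangle, then repackaging the $R$-basis of Theorem~\ref{thm-basis} by splitting off peripheral components. The paper in fact only states this as a one-sentence remark (``Peripheral curves are central because they have no intersection with other diagrams up to isotopy, so their heights can be freely isotoped when stacked'') and declares the rest to follow easily from Theorem~\ref{thm-basis}; your write-up supplies exactly the details the paper omits.
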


\subsection{Ideal tangle diagrams}
\label{sec-move-left}

In a few places, we need tangle diagrams that end on $\marked_\partial$ instead of on $\partial\surface$, which is away from $\marked_\partial$. We call these \term{ideal tangle diagrams}. In particular, ideal arcs are ideal tangle diagrams.

Given an ideal tangle diagram $\alpha$, it defines a usual tangle diagram $D(\alpha)$ by moving the endpoints slightly in the negative direction of the boundary in a way that does not introduce extra crossings. See Figure~\ref{fig-move-left}. This operation is clearly invertible (up to isotopy).

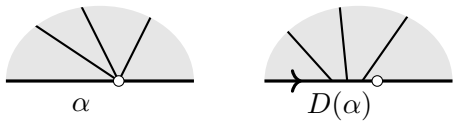
\begin{figure}
\centering
\begin{tikzpicture}[baseline=0cm]
\begin{scope}
\clip (1,0)arc[x radius=1.25,y radius=1,start angle=0,end angle=180];
\fill[gray!20] (-1.5,0) rectangle (1,1);
\draw[edge] (0,0) edge (0.5,1) edge (-0.5,1) edge (-1.5,1);
\end{scope}
\draw[wall] (-1.5,0)--(1,0);
\path (-0.5,0)node[below]{$\vphantom{D}\alpha$};
\draw[fill=white] (0,0)circle(2pt);
\end{tikzpicture}
\qquad
\begin{tikzpicture}[baseline=0cm]
\begin{scope}
\clip (1,0)arc[x radius=1.25,y radius=1,start angle=0,end angle=180];
\fill[gray!20] (-1.5,0) rectangle (1,1);
\draw[edge] (0.5,1)--(-0.2,0) (-0.5,1)--(-0.4,0) (-1.5,1)--(-0.6,0);
\end{scope}
\path[wall,tips,->] (-1.5,0)--(-1,0);
\draw[wall] (-1.5,0)--(1,0);
\path (-0.5,0)node[below]{$D(\alpha)$};
\draw[fill=white] (0,0)circle(2pt);
\end{tikzpicture}
\caption{Tangle diagram $D(\alpha)$}\label{fig-move-left}
\end{figure}

\subsection{Parameterization of basis elements}

Given a triangulation $\quasi$ of $\surface$, the $\perialg$-basis $B$ in Corollary~\ref{cor-peri} can be parameterized by edge colorings. An \term{edge coloring} is a vector $\bar{k}\in\ints^{\bar{\quasi}}$, which is also considered as a map $\bar{\quasi}\to\ints$.
A basis element $\alpha\in B$ represented by a simple tangle diagram with state $s:\partial\alpha\to\{\pm\}$ defines an edge coloring $\bar{k}_\alpha:\bar{\quasi}\to\nats$ by
\begin{equation}\label{eq-color-def}
\begin{aligned}
\bar{k}_\alpha(e)&:=I(\alpha,e),&e&\in\quasi,\\
\bar{k}_\alpha(\hat{e})&:=I(\alpha,e)-\sum_{x\in\alpha\cap e}s(x),& e&\in\quasi_\partial,
\end{aligned}
\end{equation}
where $I$ denotes geometric intersection number. This is defined in \cite[Section~6.3]{LYSL2} as $\bar{\mathbf{n}}_\alpha$, but we want to reserve $n$ for another use. It is easy to see that $\bar{k}_\alpha(\hat{e})$ is twice the number of $-$ skein on $e$. An edge coloring arising this way is called \term{admissible}. Let $\Lambda\subset\nats^{\bar\quasi}$ be the set of admissible edge colorings, and let $\balLambda\subset\ints^{\bar\quasi}$ be the subgroup generated by $\Lambda$. Edge colorings in $\balLambda$ are called \term{balanced}.

\begin{proposition}[{\cite[Lemma~6.2, Proposition~6.3]{LYSL2}}]\label{prob-bal}
$\balLambda$ consists of edge colorings $\bar{k}\in\ints^{\bar\quasi}$ such that
\begin{enumerate}
\item $\bar{k}(a)+\bar{k}(b)+\bar{k}(c)$ is even if $a,b,c\in\quasi$ are edges of a face $\tau\in\face(\quasi)$, and
\item $\bar{k}(a)$ is even if $a$ bounds a punctured monogon or if $a\in\hat\quasi_\partial$.
\end{enumerate}
$\Lambda\subset\nats^{\bar\quasi}\cap\balLambda$ is the submonoid defined by the additional inequalities
\begin{enumerate}
\item $\bar{k}(a)\le\bar{k}(b)+\bar{k}(c)$ if $a,b,c\in\quasi$ are edges of a face $\tau\in\face(\quasi)$, and
\item $\bar{k}(\hat{e})\le 2\bar{k}(e)$ for all $e\in\quasi_\partial$.
\end{enumerate}
An edge coloring $\bar{k}\in\ints^{\bar\quasi}$ is in $\balLambda$ if and only if there exists $\bar{l}\in\Lambda$ such that $\bar{k}-\bar{l}\in(2\ints)^{\bar\quasi}$.
\end{proposition}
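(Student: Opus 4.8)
The plan is to establish, in order: the description of the monoid $\Lambda$; the identification of $\balLambda$ with the subgroup $G_0\subset\ints^{\bar\quasi}$ cut out by the two parity conditions; and the ``mod $2$'' criterion. The only genuinely geometric ingredient is a normal-form construction of tangle diagrams; everything else is bookkeeping with parities and triangle inequalities.

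\emph{The monoid $\Lambda$ --- easy inclusion.} First I would check that $\bar k_\alpha\in\nats^{\bar\quasi}\cap G_0\cap\{\text{inequalities}\}$ for every $\alpha\in B$, by counting intersections face by face after putting $\alpha$ in minimal position with respect to $\quasi$ (which changes neither its isotopy class nor its coloring). In a triangle $\tau$ with edges $a,b,c$, essentiality forbids a component joining an edge to itself or a loop inside $\tau$, so $\alpha\cap\tau$ is a union of disjoint arcs joining distinct pairs of edges; writing $x_{ab},x_{bc},x_{ac}$ for their numbers one gets $\bar k_\alpha(a)=x_{ab}+x_{ac}$ and cyclically, whence $\bar k_\alpha(a)+\bar k_\alpha(b)+\bar k_\alpha(c)$ is even and the three triangle inequalities hold. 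In a punctured monogon bounded by $a$, essentiality and the absence of peripheral components force every component there to be an arc meeting $a$ an even number of times, so $\bar k_\alpha(a)$ is even. Finally $\bar k_\alpha(\hat e)$ equals twice the number of endpoints of $\alpha$ on $e$ with state $-$, hence is even, nonnegative, and $\le 2I(\alpha,e)=2\bar k_\alpha(e)$.

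\emph{The monoid $\Lambda$ --- hard inclusion.} Conversely, given $\bar k\in\nats^{\bar\quasi}\cap G_0$ satisfying the triangle inequalities and $\bar k(\hat e)\le 2\bar k(e)$, I would construct a diagram realizing it: in each triangle $abc$ draw $x_{bc}=(\bar k(b)+\bar k(c)-\bar k(a))/2$ disjoint arcs joining $b$ to $c$ and cyclically (nonnegative integers, by the inequalities and parity); in each punctured monogon bounded by $a$ draw $\bar k(a)/2$ nested arcs around the puncture; glue across each interior edge $e$ by matching, in order along $e$, the $\bar k(e)$ arc-endpoints presented by the two adjacent faces; and on each boundary edge promote the $\bar k(e)$ marked points to tangle endpoints, assigning the lowest $\bar k(\hat e)/2$ of them the state $-$ and the rest the state $+$. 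The resulting $\alpha$ is, by construction, simple, positively ordered, increasingly stated, without peripheral components, and with $\bar k_\alpha(\hat e)=\bar k(\hat e)$. What requires real work --- and is the main obstacle of the proof --- is that $\alpha$ is essential and in minimal position with respect to $\quasi$, so that $I(\alpha,e)=\bar k(e)$; this is the standard theory of normal curves: the chosen pattern forms no bigon with the edges inside any face and the order-preserving gluing creates none (so $\alpha$ is in minimal position), while a connected normal subdiagram meeting $\quasi$ is neither null-homotopic nor homotopic rel endpoints into a boundary edge nor isotopic to a peripheral curve (so $\alpha\in B$). Granting this, $\Lambda=\nats^{\bar\quasi}\cap G_0\cap\{\text{inequalities}\}$; since all these conditions are stable under coordinatewise addition, $\Lambda$ is a submonoid of $\nats^{\bar\quasi}$.

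\emph{The group $\balLambda$ and the criterion.} Since $G_0$ is a subgroup containing $\Lambda$, it contains $\balLambda=\langle\Lambda\rangle$. For the reverse inclusion, fix a large even integer $N$: the constant coloring $N\mathbf 1$ lies in $\Lambda$, and so does $N\mathbf 1\pm 2e_f$ for every $f\in\bar\quasi$ (all the conditions still hold, with room to spare, and adding an even vector preserves the parities), so $2e_f=(N\mathbf 1+2e_f)-N\mathbf 1\in\balLambda$ and hence $(2\ints)^{\bar\quasi}\subseteq\balLambda$. Likewise, for any $v\in G_0$ the coloring $N\mathbf 1+w$ with $w\in\{0,1\}^{\bar\quasi}$, $w\equiv v\pmod 2$, again lies in $\Lambda$ and differs from $v$ by an element of $(2\ints)^{\bar\quasi}\subseteq\balLambda$, so $v\in\balLambda$; thus $\balLambda=G_0$. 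This also gives one direction of the last assertion: if $\bar k-\bar l\in(2\ints)^{\bar\quasi}$ with $\bar l\in\Lambda$, then $\bar k=\bar l+(\bar k-\bar l)\in\balLambda$. For the other direction, write $\bar k\in\balLambda$ as a $\ints$-combination of elements of $\Lambda$ and collect the positive and negative terms to get $\bar k=\bar l_+-\bar l_-$ with $\bar l_\pm\in\Lambda$ (using that $\Lambda$ is a monoid); then $\bar l:=\bar l_++\bar l_-\in\Lambda$ and $\bar k-\bar l=-2\bar l_-\in(2\ints)^{\bar\quasi}$, completing the proof.
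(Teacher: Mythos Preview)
The paper does not prove this proposition: it is quoted verbatim from \cite[Lemma~6.2, Proposition~6.3]{LYSL2} and no argument is given here. So there is no ``paper's own proof'' to compare against; I can only assess your argument on its merits.

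Your argument is correct and is in fact the standard one. The easy inclusion for $\Lambda$ is routine once $\alpha$ is in minimal position with $\quasi$: the triangle parities and inequalities come from the corner-arc decomposition $\bar k_\alpha(a)=x_{ab}+x_{ac}$, the monogon parity from the fact that every essential simple arc in a once-punctured monogon meets the boundary edge exactly twice, and the $\hat\quasi_\partial$ conditions are immediate from the definition. For the hard inclusion, your normal-curve construction is exactly right, and you correctly identify the one nontrivial point: that the glued diagram is bigon-free with respect to $\quasi$ (hence realizes the prescribed geometric intersection numbers) and that no connected piece is trivial or peripheral. Both follow from the standard bigon criterion --- order-preserving gluing along interior edges cannot create a bigon, and a simple closed normal curve meeting some edge in minimal position cannot bound a disk or be isotopic to a peripheral loop (which has geometric intersection zero with every edge). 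Your treatment of $\balLambda$ is clean: using a large even constant vector $N\mathbf 1$ to absorb the inequalities and then perturbing by $2e_f$ and by the $\{0,1\}$-representative $w$ of $v\bmod 2$ gives $(2\ints)^{\bar\quasi}\subset\balLambda$ and then $G_0\subset\balLambda$; the only thing to note is that $w(\hat e)=0$ automatically (since $v(\hat e)$ is even), so $N\mathbf 1+w$ really does satisfy the $\hat\quasi_\partial$ parity. The final $\bar l=\bar l_++\bar l_-$ trick for the mod-$2$ criterion is correct and efficient.
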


The balanced subgroup $\balLambda$ has another useful description. Each edge $a\in\quasi$ defines a basis element $X_a$ of $\skein_q(\surface)$ by assigning $+$ states to both endpoints of the tangle diagram $D(a)$ defined in Section~\ref{sec-move-left}. Each boundary edge $e\in\quasi_\partial$ defines an additional basis element $X_{\hat{e}}$ by a different state assignment shown in Figure~\ref{e-hat-def}. Therefore, we get the corresponding edge colorings $\bar{k}_a:=\bar{k}_{X_a}$, $a\in\bar{\quasi}$. Define a $\bar\quasi\times\bar\quasi$ matrix
\begin{equation}
\bar{K}(a,b)=\bar{k}_a(b).
\end{equation}
Comparing Figure~\ref{e-hat-def} with the definition of $P_+$, we see
\begin{equation}
\bar{K}=\begin{pmatrix}P_+&0\\J^TP_+&2I\end{pmatrix}.
\end{equation}

\begin{figure}[h]
\centering
\begin{tikzpicture}[baseline=0cm]
\coordinate (O) at (0,0);
\coordinate (E) at (90:2);
\fill[gray!20] (1.2,0) rectangle (-1.2,2);
\draw[wall] (-1.2,0) -- (1.2,0) (-1.2,2) -- (1.2,2);
\draw (O) edge +(150:0.6) edge +(120:0.6) edge (E) edge +(30:0.6);
\draw (E) edge +(-30:0.6) edge +(-60:0.6) edge +(-150:0.6);
\draw (-0.4,0)node[below]{$+$} (E) ++ (0.4,0)node[above]{$+$};
\draw (0,1) node[left]{$X_a$} (0,0.7) node[right]{$a$};
\draw[edge,rounded corners] (E) ++(0.4,0)
	arc[radius=0.4,start angle=0, end angle=-75]
	--(105:0.4)
	arc[radius=0.4,start angle=105, end angle=180];
\draw[fill=white] (O)circle(2pt) (E)circle(2pt);
\end{tikzpicture}
\qquad
\begin{tikzpicture}[baseline=0cm]
\fill[gray!20] (1.5,0)--(1.5,1.3)--(-1.5,1.3)--(-1.5,0);
\draw[wall] (-1.5,0)--(1.5,0);
\draw (-1,0) edge ++(120:0.8) edge ++(90:0.8) edge ++(60:0.8);
\draw (1,0) edge ++(120:0.8) edge ++(90:0.8) edge ++(60:0.8);
\draw[edge] (-1.2,0)node[below]{$+$}
	..controls (-1.2,0.5) and (0.8,0.5)..(0.8,0)node[below]{$-$};
\draw (0,0)node[below]{$e$} (-0.2,0.6)node{$X_{\hat{e}}$};
\draw[fill=white] (-1,0) circle(2pt) (1,0) circle(2pt);
\end{tikzpicture}
\caption{Basis elements associated to $\bar\quasi$}\label{e-hat-def}
\end{figure}

\begin{lemma}\label{lemma-bal-basis}
The subgroup of balanced vectors is 
a free abelian group with a basis given by the rows of $\bar{K}$, or in other words, $\bar{k}_a,a\in\bar\quasi$.

Consequently, a vector $\bar{k}$ is balanced if and only if it can be written as $\bar{k}=(kP_+,2\hat{k})$ for some $k\in\ints^{\quasi}$ and $\hat{k}\in\ints^{\hat\quasi_\partial}$.
\end{lemma}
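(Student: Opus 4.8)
The plan is to reduce both assertions to the single equality of subgroups $\balLambda = L$ of $\ints^{\bar\quasi}$, where
\[
L := \bigl\{(kP_+,\,2\hat k)\ :\ k\in\ints^{\quasi},\ \hat k\in\ints^{\hat\quasi_\partial}\bigr\},
\]
and to identify $L$ with the $\ints$-span of the rows of $\bar K$. The first step is pure linear algebra: writing a general integer vector as $(c,\hat c)$ with $c\in\ints^{\quasi}$, $\hat c\in\ints^{\hat\quasi_\partial}$, one computes $(c,\hat c)\bar K = \bigl((c+\hat c J^T)P_+,\,2\hat c\bigr)$, and since $(c,\hat c)\mapsto(c+\hat c J^T,\hat c)$ is an automorphism of $\ints^{\bar\quasi}$, the row span of $\bar K$ is exactly $L$. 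Moreover $\bar K$ has a zero off-diagonal block, so $\det\bar K = 2^{\abs{\quasi_\partial}}\det P_+$, which is nonzero by Lemma~\ref{lemma-H-inverse} (the identity $P_+(JJ^T-Q)=2I$ gives $\det P_+\cdot\det(JJ^T-Q)=2^{\abs{\quasi}}$); hence the rows of $\bar K$ are $\ints$-linearly independent. Since $\balLambda$, a subgroup of $\ints^{\bar\quasi}$, is automatically free abelian, it suffices to prove $\balLambda = L$: the ``consequently'' statement is then just the description of $L$, and the rows of $\bar K$ form a basis because they are independent and (via $\balLambda=L$) span $\balLambda$.

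For the inclusion $L\subseteq\balLambda$: each row $\bar k_a$ of $\bar K$ is the edge coloring $\bar k_{X_a}$ of a basis element, hence admissible, hence balanced, so their $\ints$-span $L$ lies in $\balLambda$. For the reverse inclusion I would use the explicit description of $\balLambda$ in Proposition~\ref{prob-bal}: $\bar k$ is balanced iff every triangle-edge sum is even and $\bar k$ is even on $\hat\quasi_\partial$ and on every edge bounding a punctured monogon. Given balanced $\bar k=(v,w)$ with $v\in\ints^{\quasi}$, $w\in\ints^{\hat\quasi_\partial}$, the second condition forces $w=2\hat k$, so it remains to show $v\in\ints^{\quasi}P_+$. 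By Lemma~\ref{lemma-H-inverse}, $P_+^{-1}=\tfrac12(JJ^T-Q)$ over the rationals, so $v\in\ints^{\quasi}P_+$ iff $v(JJ^T-Q)\in 2\ints^{\quasi}$, and this I would verify modulo $2$.

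Here is the computation. Since $(JJ^T)(a,b)=1$ precisely when $a=b$ is a boundary edge, and $Q(a,b)\bmod 2$ equals the number of faces whose boundary contains both $a$ and $b$ (with $Q(a,a)=0$), expanding $JJ^T-Q$ mod $2$ and then using that each triangle-edge sum is even gives, for every $b\in\quasi$,
\[
v(JJ^T-Q)(b)\ \equiv\ [b\in\quasi_\partial]\,v(b)\ -\sum_{\substack{\tau\text{ a triangle}\\ b\in\partial\tau}}\bigl(S_\tau-v(b)\bigr)\ \equiv\ \bigl([b\in\quasi_\partial]+n_b\bigr)\,v(b)\pmod 2,
\]
where $[b\in\quasi_\partial]$ is $1$ for a boundary edge and $0$ otherwise, $S_\tau$ is the sum of $v$ over the three edges of $\tau$, and $n_b$ is the number of triangle faces incident to $b$. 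A short case analysis on the local picture at $b$ (boundary versus interior; and whether the one or two incident faces are triangles or punctured monogons) shows that either $[b\in\quasi_\partial]+n_b$ is even, or $b$ bounds a punctured monogon and hence $v(b)$ is even by the third balanced condition; either way $v(JJ^T-Q)(b)\equiv 0$. Thus $v(JJ^T-Q)\in 2\ints^{\quasi}$, so $v\in\ints^{\quasi}P_+$ and $\bar k\in L$, completing $\balLambda\subseteq L$ and hence $\balLambda=L$.

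The main obstacle is this last parity bookkeeping: one must pin down $Q\bmod 2$ in terms of face incidences and then run through the possible local configurations at an edge, carefully using both conditions of Proposition~\ref{prob-bal}, in order to match the congruence $v(JJ^T-Q)\equiv 0$ against ``balanced''. Everything else is either quoted from Lemma~\ref{lemma-H-inverse} and Proposition~\ref{prob-bal} or is elementary linear algebra over $\ints$.
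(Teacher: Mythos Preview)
Your argument is correct and complete, but it follows a different route from the paper's. The paper's proof is two lines: linear independence of the rows of $\bar K$ comes from $\det P_+\neq 0$ (via Lemma~\ref{lemma-H-inverse}), exactly as you do; but for generation the paper simply invokes Theorem~\ref{thm-qtr}. Concretely, item~(3) of that theorem says the image of the quantum trace $\phi$ lies in the $\periext$-subalgebra generated by the monomials $\phi(X_a)^{\pm1}$, whose exponents are precisely the $\bar k_a$; combined with item~(1), which identifies the top monomial of $\phi(\alpha)$ as $z^{\bar k_\alpha}$, this forces every admissible $\bar k_\alpha$ to lie in the $\ints$-span of the $\bar k_a$. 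Your approach instead bypasses the quantum trace entirely: you use only the combinatorial description of $\balLambda$ from Proposition~\ref{prob-bal} together with the matrix identity of Lemma~\ref{lemma-H-inverse}, and close the loop with a direct mod-$2$ computation of $v(JJ^T-Q)$ and a local case analysis at each edge. This is more elementary and more self-contained (Proposition~\ref{prob-bal} is a purely combinatorial statement, whereas Theorem~\ref{thm-qtr} packages a substantial construction), at the cost of the bookkeeping you flag at the end. The paper's route is shorter on the page but leans on heavier input; yours makes the lemma logically independent of the quantum trace machinery.
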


\begin{proof}
The generation is a corollary of Theorem~\ref{thm-qtr}. The linear independence follows from the invertiblility of $P_+$ in Lemma~\ref{lemma-H-inverse}.
\end{proof}

\subsection{Filtrations}

The edge colorings define filtrations of the skein algebra. Let $\dego:\ints^{\bar\quasi}\to\ints$ be the homomorphism
\begin{equation}
\dego(\bar{k})=\sum_{e\in\quasi}\bar{k}(e).
\end{equation}
Note the edges in $\hat\quasi_\partial$ are not summed. The filtration $\{F^\circ_d\}_{d\in\nats}$ is given by
\begin{equation}
F^\circ_d=\perialg\text{-span of }\{\alpha\in B\mid\dego(\bar{k}_\alpha)\le d\}.
\end{equation}

For later uses, we also need a refinement of $\{F^\circ_d\}$ that breaks all ties. Let $<$ be a well ordering on $\ints^{\bar\quasi}$ such that $\bar{k}\le\bar{l}$ implies $\dego(\bar{k})\le\dego(\bar{l})$.
Let $\{F_{\bar{k}}\}_{\bar{k}\in\Lambda}$ be the filtration defined by
\begin{equation}
F_{\bar{k}}=\perialg\text{-span of }\{\alpha\in B\mid \bar{k}_\alpha \le \bar{k}\}.
\end{equation}
For an edge coloring $\bar{k}$ with $\dego(\bar{k})=d$, it is easy to see
\begin{equation}
F^\circ_{d-1}\subset \bigcup_{\bar{l}<\bar{k}}F_{\bar{l}},\qquad
F_{\bar{k}}\subset F^\circ_d.
\end{equation}


\subsection{Quantum trace map}\label{sec-qtr}

An advantage of working with surfaces that are not closed is that there is an algebra embedding, called the quantum trace map, from $\skein_q(\surface)$ into a quantum torus $\extY$, which has a much simpler algebra structure. Define the Laurent polynomial algebra
\begin{equation}
\periext:=R[z_v^{\pm1},v\in\peripheral]
\end{equation}
It contains $\perialg$ via the identification $X_v=z_v+z_v^{-1}$. Then $\extY$ is defined as the $\periext$-algebra with the presentation
\begin{equation}
\extY=\periext\langle z_e^{\pm1},e\in\bar{\quasi}\rangle / (z_az_b=q^{\bar{Q}(a,b)}z_bz_a).
\end{equation}
Number the edges $\bar\quasi=\{e_1,\dotsc,e_r\}$ and define the (Weyl-normalized) monomial
\begin{equation}
z^{\bar{k}}=q^{-\frac{1}{2}\sum_{i<j}\bar{Q}(e_i,e_j)\bar{k}(e_i)\bar{k}(e_j)}z_{e_1}^{\bar{k}(e_1)}z_{e_2}^{\bar{k}(e_2)}\dotsm z_{e_r}^{\bar{k}(e_r)}\qquad\text{for }\bar{k}\in\ints^{\bar\quasi}.
\end{equation}
The product of monomials is
\begin{equation}
z^{\bar{k}}z^{\bar{l}}
=q^{\frac{1}{2}\langle\bar{k},\bar{l}\rangle_{\bar{Q}}}z^{\bar{k}+\bar{l}}
=q^{\langle\bar{k},\bar{l}\rangle_{\bar{Q}}}z^{\bar{l}}z^{\bar{k}},
\end{equation}
where
\begin{equation}
\langle\bar{k},\bar{l}\rangle_{\bar{Q}}=\sum_{a,b\in\bar\quasi}\bar{k}(a)\bar{Q}(a,b)\bar{l}(b)=\bar{k}^T\bar{Q}\bar{l}
\end{equation}
is the skew-symmetric bilinear form associated to $\bar{Q}$. 

The product formula shows that $\extY$ has a $\ints^{\bar\quasi}$-grading
\begin{equation}
\extY=\bigoplus_{\bar{k}\in\ints^{\bar\quasi}}\periext z^{\bar{k}}.
\end{equation}
The grading can be reduced to filtrations $\{F^\circ_d\}$ and $\{F_{\bar{k}}\}$, defined similarly to the skein algebra case.

\begin{theorem}[{\cite[Theorems~6.5, 7.1 and Lemma~7.5]{LYSL2}}]\label{thm-qtr}
Suppose $\surface$ is a triangulable surface. There is an $\perialg$-algebra embedding
\begin{equation}
\phi:\skein_q(\surface)\to\extY
\end{equation}
such that
\begin{enumerate}
\item Suppose $\alpha\in B$ is a basis element with edge coloring $\bar{k}_\alpha$. Let $d=\dego(\bar{k}_\alpha)$. Then there exists a nonzero $c(\alpha)\in\periext$ such that
\begin{equation}
\phi(\alpha)=c(\alpha)z^{\bar{k}_\alpha}\mod F^\circ_{d-1}.
\end{equation}
\item If $\alpha=X_a$ is associated to $a\in\bar\quasi$, then $\phi(X_a)$ is a monomial. In other words, the equality above holds without ``$\bmod F^\circ_{d-1}$".
\item The image of $\phi$ is contained in the subalgebra generated by $\phi(X_a)^{\pm1},a\in\quasi$ and $z_v^{\pm1},v\in\peripheral$.
\end{enumerate}
\end{theorem}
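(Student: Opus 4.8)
The plan is to build $\phi$ by the cut-and-glue strategy of Bonahon--Wong \cite{BWqtr}, adapted to stated skein algebras as in \cite{LeTriang,CL} and carried out in \cite{LYSL2}. First I would invoke L\^e's splitting homomorphism: cutting $\surface$ along an interior edge $e\in\quasi$ produces a (possibly disconnected) surface $\surface_e$ with two new boundary edges, together with an injective $R$-algebra map $\skein_q(\surface)\hookrightarrow\skein_q(\surface_e)$; iterating over all interior edges and checking that the cuts commute gives an injective homomorphism $\Theta\colon\skein_q(\surface)\hookrightarrow\bigotimes_{\tau\in\face(\quasi)}\skein_q(\tau)$, the tensor product running over the faces. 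The role of a (quasi)triangulation is that each face $\tau$ is a triangle or a punctured monogon, and for these two model surfaces one computes $\skein_q(\tau)$ directly: it embeds into a quantum torus whose generators are the side-variables, $q$-commuting according to $Q_\tau$ (for the punctured monogon one also picks up the peripheral generator $X_v$, i.e.\ a $\perialg$-twisting). Composing $\Theta$ with these model embeddings and then fusing, along each glued interior edge, the two half-edge variables into a single $z_e$ via a Weyl-ordered product yields a map into $\extY$; the bookkeeping of $q$-commutation is exactly what forces the gluing matrix on the $\quasi$-block to be $Q=\sum_\tau Q_\tau$, while the $\pm J$ blocks of $\bar{Q}$ record how a boundary edge $e\in\quasi_\partial$ interacts with its auxiliary copy $\hat e$ (the signs being fixed by the remark following the definition of $\bar{Q}$).

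Granting the construction, part (2) is the base case: for $a\in\bar{\quasi}$ the element $X_a$ is the stated analogue of a Bonahon--Wong edge-weight --- inside each face it restricts to a simple stated configuration whose quantum trace is a single monomial with no lower-order correction, so $\phi(X_a)$ is a monomial --- and reading off exponents and comparing with the definition of $P_+$ and Figure~\ref{e-hat-def} gives $\bar{K}=\begin{pmatrix}P_+&0\\J^TP_+&2I\end{pmatrix}$. For part (1) I would run the usual highest-term analysis. Given a simple, essential, increasingly stated $\alpha\in B$ with intersection vector $\bar{k}_\alpha$ and $d=\dego(\bar{k}_\alpha)$, cut $\alpha$ into the faces via $\Theta$ and resolve each piece using the model embedding. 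Each face then contributes a monomial whose multidegree is the vector of intersection numbers of that piece with the face's edges, together with strictly lower-order terms; because $\extY$ is a domain equipped with the $\dego$-compatible filtration, the top monomials multiply without cancellation, yielding $\phi(\alpha)=c(\alpha)z^{\bar{k}_\alpha}$ modulo $F^\circ_{d-1}$ for a nonzero $c(\alpha)\in\periext$. Injectivity of $\phi$ is then formal: the edge coloring $\bar{k}_\alpha$ determines $\alpha$ among basis elements, so for a nonzero $x=\sum_\alpha c_\alpha\alpha$ (with $c_\alpha\in\perialg$) the image $\phi(x)$ has a nonzero leading monomial, coming from the $<$-largest coloring occurring in $x$.

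Part (3): every $\bar{k}_\alpha$ is balanced, so by Lemma~\ref{lemma-bal-basis} it is an integer combination of the $\bar{k}_a$, $a\in\bar{\quasi}$; since each $\phi(X_a)$ is a monomial, the leading monomial $z^{\bar{k}_\alpha}$ of $\phi(\alpha)$ is --- up to a unit of $\periext$ and the peripheral Laurent monomials $z_v^{\pm1}$ --- a product of the $\phi(X_a)^{\pm1}$. One then inducts on the filtration: ordering basis elements by $<$, subtract from $\phi(\alpha)$ the matching $\periext$-multiple of that product of $\phi(X_a)^{\pm1}$ to cancel the leading term using part (1), and apply the inductive hypothesis to the remainder, which now lies in a strictly smaller $F_{\bar{l}}$.

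The step I expect to be the main obstacle is the highest-term computation underlying part (1), together with the two model-surface calculations feeding it. One has to track, for a simple diagram resolved inside a triangle or a punctured monogon, not just the monomial exponents but also the coefficient in $\periext$ and the precise way the $\pm$ states enter the $\hat e$-variables, and then show that reassembling the faces introduces no cancellation in $\extY$ --- this is where the facts that $\extY$ is a domain, that $\dego$ induces a compatible filtration, and that $B$ is parameterized by $\Lambda$ all come into play. A secondary technical point, which I would cite rather than reprove, is the existence, injectivity, and commutativity of the edge-cutting homomorphisms needed for $\Theta$: this is L\^e's splitting theorem for stated skein algebras.
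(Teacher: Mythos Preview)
The paper does not prove Theorem~\ref{thm-qtr} itself; the result is quoted from \cite[Theorems~6.5, 7.1 and Lemma~7.5]{LYSL2}, so there is no in-paper argument to compare against. Your overall plan (L\^e's splitting homomorphism, model computations on triangles and punctured monogons, highest-term analysis) is the standard route and is essentially what the cited reference does.

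There is, however, a genuine circularity in your argument for part~(3). You write: ``every $\bar{k}_\alpha$ is balanced, so by Lemma~\ref{lemma-bal-basis} it is an integer combination of the $\bar{k}_a$.'' But look at how the present paper proves Lemma~\ref{lemma-bal-basis}: ``The generation is a corollary of Theorem~\ref{thm-qtr}.'' That deduction runs through part~(3) --- once the image of $\phi$ is known to sit in the subalgebra generated by the $\phi(X_a)^{\pm1}$, the exponent vector of the leading monomial of any $\phi(\alpha)$, namely $\bar{k}_\alpha$, is forced into the $\ints$-span of the $\bar{k}_a$. So invoking Lemma~\ref{lemma-bal-basis} to establish part~(3) assumes the conclusion. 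To repair this you would need to prove the generation statement of Lemma~\ref{lemma-bal-basis} independently (an explicit lattice computation from Proposition~\ref{prob-bal} and the block form of $\bar{K}$ would do), or else argue part~(3) directly from the face-by-face construction without passing through the span of the $\bar{k}_a$.

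A smaller point: your descending induction for part~(3) is not well-posed as stated. After you subtract the matching product of $\phi(X_a)^{\pm1}$ from $\phi(\alpha)$, the remainder lies in $\extY$ but is no longer of the form $\phi(\text{basis element})$, so the inductive hypothesis --- phrased for basis elements --- does not apply to it. You need to enlarge the class you induct over, for instance to all $\periext$-combinations of monomials with balanced exponent, and then verify separately that both $\phi(\alpha)$ and the subtracted product belong to that class.
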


Define the \term{degree} and \term{leading term} of a nonzero element $\alpha\in\skein_q(\surface)$ with respect to the filtration $\{F_{\bar{k}}\}$. In other words, $\deg(\alpha)$ is the edge coloring $\bar{k}\in\ints^{\bar\quasi}$ such that
\begin{equation}
\alpha=c_0\alpha_0\mod\bigcup_{\bar{l}<\bar{k}}F_{\bar{l}},
\end{equation}
where $c_0\in\perialg$ is nonzero, and $\alpha_0\in B$ is the basis element with edge coloring $\bar{k}$. Then the leading term of $\alpha$ is defined as $\lead(\alpha)=c_0\alpha_0$.

By Theorem~\ref{thm-qtr}(1), the degree of $\alpha$ can be obtained from the degree of $\phi(\alpha)\in\extY$, which is much easier since there is actually a grading. The following result is an easy corollary.

\begin{corollary}\label{cor-deg-lt}
Let $\alpha,\beta\in\skein_q(\surface)\setminus\{0\}$. Then
\begin{enumerate}
\item $\deg(\alpha\beta)=\deg\alpha+\deg\beta$. 
\item $\lead(\alpha\beta)=q^{\langle\deg\alpha,\deg\beta\rangle_{\bar{Q}}}\lead(\beta\alpha)$.
\end{enumerate}
\end{corollary}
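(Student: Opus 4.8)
The plan is to transport everything through the quantum trace embedding $\phi\colon\skein_q(\surface)\to\extY$ and to work inside the quantum torus, where degrees and leading terms come from an honest $\ints^{\bar\quasi}$-grading. First I would record the dictionary. By Theorem~\ref{thm-qtr}(1), $\phi$ carries the filtration $\{F_{\bar k}\}$ on $\skein_q(\surface)$ into the corresponding filtration on $\extY$, and for a basis element $\alpha\in B$ the $<$-leading monomial of $\phi(\alpha)$ is $c(\alpha)z^{\bar k_\alpha}$ with $c(\alpha)\in\periext$ nonzero. Extending $\perialg$-linearly and using that distinct elements of $B$ have distinct edge colorings (the colorings parameterize $B$), one obtains for every nonzero $\alpha$ that $\deg\alpha$ equals the grading-degree of $\phi(\alpha)$ and that the $<$-leading monomial of $\phi(\alpha)$ is $c\,z^{\deg\alpha}$ for a nonzero $c\in\periext$; moreover $c=c_0\,c(\alpha_0)$ when $\lead(\alpha)=c_0\alpha_0$. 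Note also that $\phi$ injective and $\extY$ a domain force $\alpha\beta\ne 0$, so $\deg(\alpha\beta)$ makes sense. Thus it suffices to prove the two assertions for grading-degrees and leading monomials in $\extY$ and then read them back through $\phi$.

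For part~(1): writing $\phi(\alpha)=\sum_{\bar k}a_{\bar k}z^{\bar k}$ and $\phi(\beta)=\sum_{\bar l}b_{\bar l}z^{\bar l}$ (finite sums, coefficients in $\periext$), the product formula $z^{\bar k}z^{\bar l}=q^{\frac12\langle\bar k,\bar l\rangle_{\bar Q}}z^{\bar k+\bar l}$ gives $\phi(\alpha)\phi(\beta)=\sum a_{\bar k}b_{\bar l}q^{\frac12\langle\bar k,\bar l\rangle_{\bar Q}}z^{\bar k+\bar l}$. Since $\periext$ is a domain, the top monomial $z^{\deg\alpha+\deg\beta}$ cannot be cancelled, so $\deg_\extY(\phi(\alpha)\phi(\beta))=\deg\alpha+\deg\beta$ and $\lead_\extY(\phi(\alpha)\phi(\beta))=\lead_\extY(\phi(\alpha))\,\lead_\extY(\phi(\beta))$. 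Because $\phi(\alpha\beta)=\phi(\alpha)\phi(\beta)$, transporting back along $\phi$ yields $\deg(\alpha\beta)=\deg\alpha+\deg\beta$.

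For part~(2): by part~(1) we have $\deg(\alpha\beta)=\deg(\beta\alpha)=\deg\alpha+\deg\beta=:\bar m_0$, so $\lead(\alpha\beta)=c_1\gamma$ and $\lead(\beta\alpha)=c_2\gamma$ are $\perialg$-multiples of the same basis element $\gamma$, the one with edge coloring $\bar m_0$. Applying $\phi$, the $<$-leading monomial of $\phi(\alpha\beta)$ is $c_1c(\gamma)z^{\bar m_0}$ and that of $\phi(\beta\alpha)$ is $c_2c(\gamma)z^{\bar m_0}$ (the lower-order contributions of $\phi(\gamma)$ and of the tails of $\alpha\beta$ and $\beta\alpha$ do not reach the top). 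On the other hand, by the previous paragraph these leading monomials are the products of the leading monomials of the factors, hence equal $a_0z^{\deg\alpha}\cdot b_0z^{\deg\beta}$ and $b_0z^{\deg\beta}\cdot a_0z^{\deg\alpha}$ respectively, with $a_0,b_0\in\periext$ the leading coefficients of $\phi(\alpha),\phi(\beta)$; since $\periext$ is commutative and $z^{\deg\alpha}z^{\deg\beta}=q^{\langle\deg\alpha,\deg\beta\rangle_{\bar Q}}z^{\deg\beta}z^{\deg\alpha}$, the first is $q^{\langle\deg\alpha,\deg\beta\rangle_{\bar Q}}$ times the second. Comparing the two expressions, cancelling the free-module basis element $z^{\bar m_0}$ and then $c(\gamma)\ne 0$ in the domain $\periext$, gives $c_1=q^{\langle\deg\alpha,\deg\beta\rangle_{\bar Q}}c_2$, that is, $\lead(\alpha\beta)=q^{\langle\deg\alpha,\deg\beta\rangle_{\bar Q}}\lead(\beta\alpha)$.

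The one point that needs care — and which I expect to be the only real work — is the claim used above that the $<$-leading monomial is never killed in a product and that grading-degrees in $\extY$ are additive; equivalently, that $\{F_{\bar k}\}$ is a multiplicative filtration, so that $\phi$ descends to an algebra embedding $\gr\skein_q(\surface)\hookrightarrow\gr\extY$ and in $\gr\extY$ the quantum-torus commutation relation $z^{\bar k}z^{\bar l}=q^{\langle\bar k,\bar l\rangle_{\bar Q}}z^{\bar l}z^{\bar k}$ holds. This rests on $\periext$ being a domain, on the well order $<$ being chosen compatibly with addition on the colorings that actually occur (for instance by refining $\dego$ with a lexicographic order on $\ints^{\bar\quasi}$, which is strictly addition-compatible), and on the observation from Theorem~\ref{thm-qtr}(1) that $\phi$ of a basis element has a single monomial of top $\dego$-degree. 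Once this is in place, parts~(1) and~(2) are exactly the computations with the product formula and the commutation relation sketched above.
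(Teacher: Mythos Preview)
Your proof is correct and is exactly the route the paper has in mind: the corollary is stated immediately after Theorem~\ref{thm-qtr} with only the remark that the degree of $\alpha$ can be read off from the grading of $\phi(\alpha)\in\extY$, and no further argument is given. Your closing caveat that one needs the well order $<$ to be translation-compatible (e.g.\ graded lexicographic on $\nats^{\bar\quasi}$, which is both a well order and addition-compatible) is a genuine point the paper glosses over; with such a choice the argument goes through precisely as you wrote it.
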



\subsection{The Frobenius homomorphism}

Suppose $\zeta$ is a root of unity. Let $n=\order(\zeta)$, $d=\gcd(n,4)$, $N=n/d=\order(\zeta^4)$, and $\epsilon=\zeta^{N^2}$. Strictly speaking, we should also define $\epsilon^{1/2}=(\zeta^{1/2})^{N^2}$, which will be implied throughout the rest of the paper. 

\begin{theorem}[\cite{BLFrob}]\label{thm-thread}
There exists an algebra map
\begin{equation}
\Phi_\zeta:\skein_\epsilon(\surface)\to\skein_\zeta(\surface),
\end{equation}
called the \term{Frobenius homomorphism}, satisfying the following properties.
\begin{enumerate}
\item If $\alpha$ is an arc, then $\Phi_\zeta(\alpha)$ is represented by the tangle with $N$ components, each of which is a copy of $\alpha$ shifted slightly in the direction of the framing. 
\item More generally, for a tangle $\alpha$, $\Phi_\zeta(\alpha)$ is a linear combination of tangles obtained by replacing each arc component of $\alpha$ with exactly $N$ parallel copies and each closed component of $\alpha$ with $\le N$ parallel copies. The coefficient of the term where every component becomes $N$ copies is $1$. Consequently,
\begin{equation}
\deg\Phi_\zeta(\alpha)=N\deg\alpha.
\end{equation}
\item ((Skew-)transparency.) Let $\alpha$ be a tangle disjoint from isotopic tangles $\beta_0,\beta_1$. If $\alpha\cup\beta_0$ and $\alpha\cup\beta_1$ have diagrams that differ by a crossing change, then
\begin{equation}\label{eq-skew-trans}
\Phi_\zeta(\alpha)\cup\beta_0=\zeta^{2N}\Phi_\zeta(\alpha)\cup\beta_1.
\end{equation}
\end{enumerate}
\end{theorem}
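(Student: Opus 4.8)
The plan is to define $\Phi_\zeta$ on the basis of $\skein_\epsilon(\surface)$ from Theorem~\ref{thm-basis} by a threading operation, extend $R$-linearly (so that well-definedness as a linear map is automatic), and then establish the one substantive point, multiplicativity, via a transparency principle; properties (1)--(3) are then read off. Concretely: for a simple essential increasingly stated diagram $\alpha$, let $\Phi_\zeta(\alpha)$ be obtained by replacing each arc component with $N$ parallel push-offs in the framing direction, carrying the state of each endpoint onto all $N$ of its copies, and each closed component $K$ with $T_N(K)$, where $T_N$ is the Chebyshev polynomial normalized by $T_N(x+x^{-1})=x^N+x^{-N}$ and evaluated using parallel push-offs and the skein product; then extend $R$-linearly using the freeness in Theorem~\ref{thm-basis}. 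Property (1) is then immediate, and if $\alpha$ has edge coloring $\bar{k}_\alpha$, the term in which every component becomes exactly $N$ copies is the basis element with edge coloring $N\bar{k}_\alpha$ (which lies in $\Lambda$, since scaling by $N$ preserves the parity and inequality conditions of Proposition~\ref{prob-bal}), while every other term --- coming from a strictly lower power of some closed component --- has strictly smaller $\dego$ because a non-peripheral essential closed curve meets an interior edge; hence $\lead(\Phi_\zeta(\alpha))$ is that basis element, so $\deg\Phi_\zeta(\alpha)=N\deg\alpha$ and the coefficient normalization in (2) holds.

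Next I would prove $\Phi_\zeta(\alpha\beta)=\Phi_\zeta(\alpha)\Phi_\zeta(\beta)$. Stacking the threaded diagrams $\Phi_\zeta(\alpha)$ over $\Phi_\zeta(\beta)$ introduces crossings; resolving them with the skein relation and matching against the basis reduction of $\alpha\beta$ threaded is bookkeeping once one has the \emph{transparency} property: a component of a threaded diagram passes across any disjoint strand at the cost of the single scalar $\zeta^{2N}$, independently of how the parallels are drawn. Granting transparency, multiplicativity follows by resolving crossings one at a time and sliding threaded components around, following the template of Bonahon--Wong's closed-surface argument; here the boundary relations \eqref{eq-defrel3}--\eqref{eq-defrel4} need no separate check since $\Phi_\zeta$ is built directly on the basis, and the monogon and bigon are the base cases --- there $\skein_q$ is $R$ (resp.\ $\mathcal{O}_{q^2}(SL_2)$) and $\Phi_\zeta$ raises the four stated edge-arcs to their $N$-th powers, whose consistency with $\det_{q^2}=1$ is the $SL_2$ quantum Frobenius identity. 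Property (2) for a general tangle and property (3) then follow by combining the definition with multiplicativity: resolving the crossings of $\Phi_\zeta$ of a tangle reproduces the ``each arc becomes $N$ copies, each loop $\le N$ copies'' expansion with the all-$N$ term of coefficient $1$, and \eqref{eq-skew-trans} is transparency localized around a crossing change.

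The main obstacle is transparency itself, above all the arc case (for closed components it is Bonahon--Wong's theorem, proven via a Chebyshev identity making a $T_N$-threaded loop encircling a strand act as a scalar). For arcs I would localize: an arc crossing a strand isotopes into a disk with finitely many boundary marked points, where Theorem~\ref{thm-qtr} identifies $\skein_q$ with a quantum torus, $N$ parallel copies of the arc become a monomial $z^{N\bar{k}}$ up to lower terms, and transparency reduces to $z_a^Nz_b^N=\epsilon^{\bar{Q}(a,b)}z_b^Nz_a^N$ together with $\epsilon=\zeta^{N^2}$ and the $\zeta^{2N}$ twist produced by resolving the crossing. For triangulable $\surface$ this globalizes: define $\mathrm{Fr}\colon\extY_\epsilon\to\extY_\zeta$ by $z_e\mapsto z_e^N$ for $e\in\bar{\quasi}$ and $z_v\mapsto z_v^N$ for $v\in\peripheral$ --- well-defined since $\zeta^{N^2\bar{Q}(a,b)}=\epsilon^{\bar{Q}(a,b)}$, and compatible with the Weyl normalization so that $z^{\bar{k}}\mapsto z^{N\bar{k}}$ --- and then establish the intertwining $\phi_\zeta\circ\Phi_\zeta=\mathrm{Fr}\circ\phi_\epsilon$, after which transparency is inherited from the quantum torus, where it is immediate. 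Establishing this intertwining, equivalently the arc transparency, is the genuine difficulty, since it requires controlling $\phi$ on $N$-fold parallel copies and on Chebyshev-threaded loops, which are not among the monomial generators of Theorem~\ref{thm-qtr}(2).
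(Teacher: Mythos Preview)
The paper does not prove Theorem~\ref{thm-thread}; it is quoted from \cite{BLFrob} as an external input. Your proposal is therefore not competing with any argument in the present paper but sketching how the cited result is established. The outline you give --- thread arcs to $N$ parallels and loops to $T_N$, define $\Phi_\zeta$ on the basis of Theorem~\ref{thm-basis} and extend $R$-linearly, then reduce multiplicativity to a transparency principle and ultimately to a quantum-torus Frobenius intertwining $\phi_\zeta\circ\Phi_\zeta=\mathrm{Fr}\circ\phi_\epsilon$ --- is indeed the strategy of \cite{BLFrob} and its antecedents, and you correctly locate the genuine difficulty in establishing that intertwining (equivalently, arc transparency).

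One point to sharpen: defining $\Phi_\zeta$ on a basis makes linearity free but pushes all content into multiplicativity, and your reduction of multiplicativity to transparency is more delicate than ``resolve crossings one at a time and slide.'' What must be checked is that threading intertwines the skein relation \eqref{eq-defrel1} itself --- i.e.\ that $N$-cabling a crossing and resolving at $\zeta$ gives the $\epsilon$-resolution threaded --- which is where the Chebyshev identity and the arithmetic $\epsilon=\zeta^{N^2}$ enter; transparency \eqref{eq-skew-trans} is a consequence of this, not a substitute for it. Your final paragraph honestly flags this gap, and that is the right assessment: filling it in is precisely the substance of \cite{BLFrob}.
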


Combining (1) with height exchange moves, it is easy to see that if $\alpha$ is a simple arc diagram, then $\Phi_\zeta(\alpha)$ and $\alpha^N$ differ by a power of $\zeta$.

\section{Characterization of the center}

\subsection{Center at a generic $q$}

As a warm-up, we show that at generic $q$, the center is the obvious one.

\begin{theorem}\label{thm-center-generic}
If $q$ is not a root of unity, then the center of $\skein_q(\surface)$ is $\perialg$.
\end{theorem}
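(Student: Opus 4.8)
The plan is to use the quantum trace embedding $\phi\colon\skein_q(\surface)\to\extY$ to reduce the problem to a computation inside the quantum torus $\extY$, where the center is easy to identify. Since $\phi$ is an $\perialg$-algebra embedding, an element $\alpha$ is central in $\skein_q(\surface)$ if and only if $\phi(\alpha)$ commutes with every $\phi(\beta)$, $\beta\in\skein_q(\surface)$. The key point is that we do not need $\phi$ to be surjective: by Theorem~\ref{thm-qtr}(3), the image of $\phi$ is contained in (indeed generated over $\periext$ by) the monomials $\phi(X_a)^{\pm1}$, $a\in\quasi$, together with $z_v^{\pm1}$, $v\in\peripheral$, and these monomials have degrees $\bar{k}_a$ spanning the balanced subgroup $\balLambda$ by Lemma~\ref{lemma-bal-basis}. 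So $\phi(\alpha)$ is central in the image precisely when its degree pairs trivially (via $\langle\cdot,\cdot\rangle_{\bar{Q}}$, as in Corollary~\ref{cor-deg-lt}) against every balanced vector, plus a ``$q$ not a root of unity'' argument to promote this from leading terms to the whole element.

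The steps, in order. First, reduce to the triangulable case: if $\surface$ is a monogon the skein algebra is $R=\perialg$ and there is nothing to prove; if $\surface$ is a bigon one can check directly (or cite the known structure of $\mathcal{O}_{q^2}(SL_2)$ at generic $q$, whose center is the ground ring, noting $\peripheral=\emptyset$ here); otherwise $\surface$ is triangulable and we fix a triangulation $\quasi$. Second, suppose $\alpha$ is central and nonzero, with $\deg\alpha=\bar{k}$. For any $\beta\in B$ with $\deg\beta=\bar{l}$, Corollary~\ref{cor-deg-lt}(2) gives $\lead(\alpha\beta)=q^{\langle\bar{k},\bar{l}\rangle_{\bar{Q}}}\lead(\beta\alpha)$; since $\alpha\beta=\beta\alpha$ and leading terms are unique up to the $\perialg$-coefficient, we get $q^{\langle\bar{k},\bar{l}\rangle_{\bar{Q}}}=1$, hence $\langle\bar{k},\bar{l}\rangle_{\bar{Q}}=0$ because $q$ is not a root of unity. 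Third, vary $\beta$ over the generators $X_a$, $a\in\bar\quasi$, so $\bar{l}$ ranges over a generating set of $\balLambda$; conclude $\langle\bar{k},\bar{l}\rangle_{\bar{Q}}=0$ for all $\bar{l}\in\balLambda$. By Lemma~\ref{lemma-bal-basis} every balanced $\bar{l}$ has the form $(kP_+,2\hat{k})$, and using the block form of $\bar{Q}$ together with $\deg\alpha\in\Lambda\subset\balLambda$ (so $\bar{k}$ is itself balanced, $\bar{k}=(mP_+,2\hat m)$), the pairing condition should force $\bar{k}=0$. Concretely, the nondegeneracy of $\langle\cdot,\cdot\rangle_{\bar{Q}}$ restricted to $\balLambda$ is equivalent to invertibility of $P_+\!\cdot(\text{something})$, which is exactly the content available from Lemma~\ref{lemma-H-inverse} ($P_+(JJ^T-Q)=2I$); I would extract from that identity that $\bar{Q}$ is nondegenerate on the balanced sublattice. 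Fourth, $\deg\alpha=0$ means $\alpha\in F_{0}=\perialg\cdot\{$empty tangle$\}=\perialg$, and the reverse inclusion $\perialg\subseteq\centV(\skein_q(\surface))$ is Corollary~\ref{cor-peri}. This finishes the proof.

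The main obstacle I expect is Step three: verifying that $\langle\cdot,\cdot\rangle_{\bar{Q}}$ is nondegenerate on the balanced subgroup $\balLambda$, i.e. that a balanced vector pairing to zero with all balanced vectors must vanish. The matrix $\bar{Q}$ itself is typically degenerate on all of $\ints^{\bar\quasi}$ (for instance the $\hat\quasi_\partial$ directions and the $\periext$-central directions), so the argument has to genuinely use that both $\bar{k}$ and the test vectors $\bar{l}$ lie in $\balLambda$, and it has to interact correctly with the sign change in $\bar{Q}$ relative to \cite{LYSL2}. I anticipate this comes down to the identity of Lemma~\ref{lemma-H-inverse}: writing $\bar{k}=(mP_+,2\hat m)$, $\bar{l}=(kP_+,2\hat k)$ and expanding $\bar{k}^T\bar{Q}\bar{l}$ block-wise, the cross terms involving $J,J^T$ should combine with $P_+ Q$ and $P_+JJ^T$ in a way that, after invoking $P_+(JJ^T-Q)=2I$, reduces the vanishing-for-all-$\bar{l}$ condition to $m=0$ and $\hat m=0$. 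A secondary, more routine obstacle is handling the bigon as a genuine exception rather than via the matrix formalism; here one can either cite the $\mathcal{O}_{q^2}(SL_2)$ description or note that the excerpt's commented-out bigon conventions make the same argument go through.
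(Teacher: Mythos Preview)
Your setup through Step~2 is the same as the paper's: take a central $\alpha$, set $\bar{k}=\deg\alpha$, and use Corollary~\ref{cor-deg-lt}(2) with $\beta=X_a$, $a\in\bar\quasi$, to deduce $\bar{K}\bar{Q}\bar{k}=0$ over $\ints$. The divergence---and the genuine gap---is in Step~3.

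The bilinear form $\langle\cdot,\cdot\rangle_{\bar Q}$ is \emph{not} nondegenerate on $\balLambda$ whenever $\surface$ has a boundary component with an even number of ideal points (i.e.\ whenever $b_2\ge1$). Concretely, for such a component $C$ with boundary edges $e_1,\dots,e_r$ ($r$ even), the balanced vector $\bar{k}_C=\sum_{i=1}^r(-1)^i\bar{k}_{\hat e_i}$ lies in the radical: the commutation identity $X_{\hat e}\alpha=q^{\delta_e(\alpha)+\delta_{e'}(\alpha)}\alpha X_{\hat e}$ (cited as \cite[Lemma~4.5]{LYSL2}) makes the exponents telescope to zero around $C$, so $\langle\bar{k}_C,\bar{l}\rangle_{\bar Q}=0$ for every $\bar{l}\in\balLambda$. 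These are exactly the degrees underlying the extra central elements of Lemma~\ref{lemma-central-boundary}(1), and they generate a rank-$b_2$ direct summand $\Lambda_B\subset\balLambda$ (Lemma~\ref{lemma-bdry-summand}). So the nondegeneracy route you outline cannot work, and Lemma~\ref{lemma-H-inverse} alone will not rescue it.

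What the paper uses instead is that $\bar{k}=\deg\alpha$ is not merely balanced but \emph{admissible}, hence has nonnegative entries. After the block computation (your Step~3 done explicitly), the equations reduce to $k_\partial=0$, $k=-P_+J\hat{k}$, and $\hat{k}(b_1)+\hat{k}(b_2)=0$ for each pair of adjacent boundary edges $b_1,b_2$. Over $\ints$ this last relation has the alternating solutions above; it is the constraint $\hat{k}\ge0$ that forces $\hat{k}=0$, hence $k=0$ and $\bar{k}=0$, giving $\alpha\in\perialg$. Replacing your nondegeneracy claim with this positivity argument closes the gap.
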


\begin{proof}
Suppose $\alpha$ is central. Let $\bar{k}=\deg \alpha$. Since $\lead(\alpha\beta)=\lead(\beta\alpha)$ for any $\beta\in\skein_q(\surface)$, $\bar{k}_\beta^T\bar{Q}\bar{k}=0$ by Lemma~\ref{cor-deg-lt}. By choosing $\beta=X_a$ for $a\in\bar\quasi$, we get
\begin{equation}
\bar{K}\bar{Q}\bar{k}=0.
\end{equation}
Using block matrix notations,
\begin{align}
\begin{pmatrix}I&0\\-J^T&I\end{pmatrix}\bar{K}\bar{Q}
&=\begin{pmatrix}P_+&0\\0&2I\end{pmatrix}\begin{pmatrix}Q&-J\\J^T&0\end{pmatrix}
=\begin{pmatrix}P_+Q&-P_+J\\2J^T&0\end{pmatrix}\\
&=\begin{pmatrix}P_+JJ^T-2I&-P_+J\\2J^T&0\end{pmatrix}
\end{align}
where Lemma~\ref{lemma-H-inverse} was used in the last step. We can decompose $\bar{k}=(k,2\hat{k})$ according to $\bar\quasi=\quasi\sqcup\hat\quasi_\partial$, where the factor of $2$ comes from the admissible condition. Then we have
\begin{equation}\label{eq-center-generic}
\begin{aligned}
P_+Jk_\partial-2k-2P_+J\hat{k}&=0,\\
2k_\partial&=0.
\end{aligned}
\end{equation}
Here, $k_\partial:=J^Tk$ consists of the components corresponding to boundary edges. Multiplying the first equation with $J^T$ and using $k_\partial=0$, we get $J^TP_+J\hat{k}=0$.

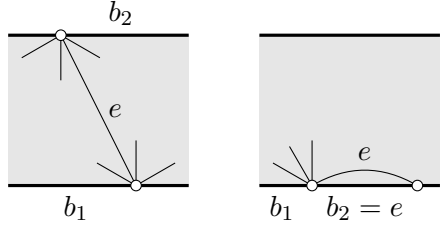
\begin{figure}
\centering
\begin{tikzpicture}[baseline=0cm]
\coordinate (O) at (0.5,0);
\coordinate (E) at (-0.5,2);
\fill[gray!20] (1.2,0) rectangle (-1.2,2);
\draw[wall] (-1.2,0) -- (1.2,0) (-1.2,2) -- (1.2,2);
\draw (O) edge +(150:0.6) edge +(90:0.6) edge +(30:0.6);
\draw (O) -- node[midway,right]{$e$} (E);
\draw (E) edge +(-30:0.6) edge +(-90:0.6) edge +(-150:0.6);
\path (-0.3,0)node[below]{$b_1$} (0.3,2)node[above]{$b_2$};
\draw[fill=white] (O)circle(2pt) (E)circle(2pt);
\end{tikzpicture}
\qquad
\begin{tikzpicture}[baseline=0cm]
\coordinate (O) at (-0.5,0);
\fill[gray!20] (1.2,0) rectangle (-1.2,2);
\draw[wall] (-1.2,0) -- (1.2,0) (-1.2,2) -- (1.2,2);
\draw (O) edge +(150:0.6) edge +(120:0.6) edge +(90:0.6);
\draw (O) to[bend left] node[midway,above]{$e$} (0.9,0);
\path (-0.9,0)node[below]{$b_1$} (0.2,0)node[below]{$b_2=e$};
\draw[fill=white] (O)circle(2pt) (0.9,0)circle(2pt);
\end{tikzpicture}
\caption{Edges $b_1,b_2$}\label{fig-matching}
\end{figure}

Consider the product
\begin{equation}\label{eq-PJk}
(P_+J\hat{k})(e)=\sum_{a\in\quasi}\sum_{b\in\quasi_\partial}P_+(e,a)J(a,b)\hat{k}(b)=\sum_{b\in\quasi_\partial}P_+(e,b)\hat{k}(b)=\hat{k}(b_1)+\hat{k}(b_2),
\end{equation}
where $b_1,b_2\in\quasi_\partial$ are the boundary edges counterclockwise to $e$. See Figure~\ref{fig-matching}. Multiplying $J^T$ simply restricts $e\in\quasi_\partial$. Thus, $J^TP_+J\hat{k}=0$ implies that $\hat{k}(b_1)+\hat{k}(b_2)=0$ if $b_1$ and $b_2$ are adjacent boundary edges. Since $\bar{k}_\alpha$ is admissible, the components are nonnegative. Thus, admissible solutions have $\hat{k}=0$, which implies the degree is trivial. The only elements with trivial degree are polynomials of peripheral curves.
\end{proof}

\subsection{Gradings of the stated skein algebra}

To describe the center when $q=\zeta$ is a root of unity, it is convenient to introduce gradings.

Each tangle diagram defines a homology class in $H=H_1(\surclose,\partial\surface;\ints/2)$. Note we use the compact surface $\surclose$ but only the punctured boundary $\partial\surface=\partial\surclose\setminus\marked_\partial$. This homology class is preserved by all defining relations. Stacking diagrams results in the sum of homology classes. Thus, the stated skein algebra is graded by $H$. This is called the \term{homology class grading}.

There is an equivalent form of the homology class grading. The operation $D$ defined in Section~\ref{sec-move-left} induces an isomorphism
\begin{equation}
D_\ast:H^\ast\to H,\qquad
H^\ast:=H_1(\surclose,\marked_\partial;\ints/2).
\end{equation}
Thus, any tangle diagram $\alpha$ also defines an element $D_\ast^{-1}(\alpha)\in H^\ast$. The notation $H^\ast$ is intended to indicate that it is the dual of $H$ with respect to the $\bmod2$ intersection pairing
\begin{equation}
i_2:H\otimes H^\ast\to\ints/2.
\end{equation}

There is also a $\ints$-grading for each boundary edge $e$. Given a stated diagram $\alpha$, let $\delta_e(\alpha)$ be sum of states of $\alpha$ on $e$, where $\pm$ are identified with $\pm1$. This is compatible with the defining relations and stacking. Such gradings are called \term{boundary gradings}.

The homology class grading and boundary gradings are clearly compatible, so together they form an $H\times\ints^{\quasi_\partial}$ grading. They are not independent, as a boundary grading $\bmod2$ reduces to an intersection number, which is homological.

The gradings can also be determined from edge colorings. Let $\alpha$ be a diagram with edge coloring $\bar{k}_\alpha$. Note each edge $e\in\quasi$ represents a homology class in $H^\ast$. From the definition \eqref{eq-color-def}, we see
\begin{equation}
\begin{aligned}
i_2(\alpha,e)&\equiv\bar{k}_\alpha(e)\bmod2 ,&e&\in\quasi,\\
\delta_e(\alpha)&=\bar{k}_\alpha(e)-\bar{k}_\alpha(\hat{e}),& e&\in\quasi_\partial.
\end{aligned}
\end{equation}
The first equation determines the homology class of $\alpha$ since the edges of a triangulation span the homology group $H^\ast$ and the intersection pairing $i_2$ is non-degenerate.

We define two subalgebras using these gradings. Suppose $\alpha$ is a tangle diagram on the surface $\surface$. $\alpha$ is \term{matching} if all boundary gradings $\delta_e(\alpha)$ (or intersection numbers with the boundary edges) have the same parity. $\alpha$ is \term{even} if the homology class grading is zero, and all boundary gradings are divisible by $4$.
The matching subalgebra $\matchedS(\surface)$ and the even subalgebra $\evenS(\surface)$ are spanned by their corresponding types of diagrams.

\subsection{Commutation relations at a root of unity}

Recall that $\zeta$ denotes a root of unity, $n=\order(\zeta)$, $d=\gcd(n,4)$, $N=n/d=\order(\zeta^4)$, and $\epsilon=\zeta^{N^2}$.

\begin{lemma}\label{lemma-Frob-comm}
Suppose $\alpha\in\skein_\epsilon(\surface)$ and $\beta\in\skein_\zeta(\surface)$ are given by tangle diagrams. Then
\begin{equation}\label{eq-Frob-comm}
\Phi_\zeta(\alpha)\beta=\zeta^{c(\alpha,\beta)N}\beta\Phi_\zeta(\alpha),\qquad
c(\alpha,\beta)=2i_2(\alpha,D^{-1}_\ast(\beta))-\sum_{e\in\quasi_\partial}\delta_e(\alpha)\delta_e(\beta).
\end{equation}
\end{lemma}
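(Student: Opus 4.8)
The plan is to reduce the general statement to the case where $\alpha$ is a simple arc diagram, where we can compute the commutation factor directly from the local picture of Frobenius. First I would use Theorem~\ref{thm-thread}(1) together with the remark following it: for a simple arc diagram $\alpha$, $\Phi_\zeta(\alpha)$ equals $\alpha^N$ up to a power of $\zeta$, and more usefully, $\Phi_\zeta(\alpha)$ is represented by $N$ parallel copies of $\alpha$. Since both sides of \eqref{eq-Frob-comm} are multiplicative in $\alpha$ in the appropriate sense — if the formula holds for $\alpha_1$ and $\alpha_2$ with disjoint diagrams stacked to form $\alpha_1\alpha_2$, then $c(\alpha_1\alpha_2,\beta)=c(\alpha_1,\beta)+c(\alpha_2,\beta)$ because $i_2$ and the $\delta_e$ are additive under stacking, and $\Phi_\zeta$ is an algebra map — it suffices to verify the identity when $\alpha$ is a single arc, and then (since any simple diagram is a product of arcs and closed curves, and closed curves are handled by the same skew-transparency argument with $\delta_e=0$) the general tangle-diagram case follows. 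Actually, even more cleanly: both $\Phi_\zeta(\alpha)\beta$ and $\beta\Phi_\zeta(\alpha)$ and the exponent $c(\alpha,\beta)$ only depend on $\alpha$ through its class in the relevant gradings once we know the single-arc case, so I would argue by multiplicativity over a decomposition of $\alpha$ into arcs.

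For a single arc $\alpha$, the computation is local. I would isotope so that $\alpha$ and $\beta$ meet transversally in the interior and meet the boundary edges in distinct heights, and then move $\Phi_\zeta(\alpha)$ (the $N$ parallel copies) past $\beta$ one crossing at a time. Each interior crossing of $\beta$ with $\alpha$ becomes $N$ crossings with the $N$ parallel copies; applying skew-transparency \eqref{eq-skew-trans} at each of these contributes a factor $\zeta^{2N}$ per copy, i.e. $\zeta^{2N}$ total per transparent strand is wrong — rather \eqref{eq-skew-trans} already absorbs all $N$ copies at once since $\Phi_\zeta(\alpha)$ is the whole bundle, so each interior intersection point of $\beta$ with $\alpha$ contributes exactly $\zeta^{2N}$. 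Summing over interior intersection points mod $2$ (the sign of a crossing change is immaterial since $\zeta^{2N}=\zeta^{-2N}$ is the relevant symmetry... more precisely $\zeta^{2N}$ raised to an integer, and only the parity matters because a double crossing change is trivial) gives $\zeta^{2N\, i_2(\alpha,D_\ast^{-1}(\beta))}$, which matches the first term $2i_2(\alpha,D^{-1}_\ast(\beta))$ in $c(\alpha,\beta)N$. The boundary contribution comes from sliding the endpoints of the $N$ copies of $\alpha$ past the endpoints of $\beta$ along each boundary edge $e$: here one uses the height-exchange moves \eqref{eq-height-ex}, which produce a factor $q^{\mp\mu}$ depending on the two states, and summing these over the $N$ copies of $\alpha$ (each with its appropriate states) and over the endpoints of $\beta$ on $e$ yields $\zeta^{-N\delta_e(\alpha)\delta_e(\beta)}$; the $N$ enters because $\Phi_\zeta(\alpha)$ has $N$ copies each repeating the state pattern of $\alpha$, and $\delta_e(\Phi_\zeta(\alpha))=N\delta_e(\alpha)$. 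Summing over $e\in\quasi_\partial$ gives the second term.

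The main obstacle will be the boundary bookkeeping: I need to make sure that the $N$ parallel copies produced by $\Phi_\zeta$ can be arranged on each boundary edge $e$ in a single block (all higher or all lower than the endpoints of $\beta$) so that the height exchanges can be applied coherently, and that the state-dependent factors from \eqref{eq-height-ex} assemble into exactly $q^{-\delta_e(\alpha)\delta_e(\beta)}$ after replacing $q$ by $\zeta$ — keeping track of the half-integer exponents $q^{\pm1/2}$ and confirming they cancel or combine correctly. One subtlety is that \eqref{eq-height-ex} is stated for moving a single strand past a wall-crossing strand, so I must iterate it and check the accumulated coefficient is multiplicative in the two state-sums, which is where the bilinear shape $\delta_e(\alpha)\delta_e(\beta)$ comes from. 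A secondary point to verify is that passing from $q=\zeta$ acting on $\skein_\zeta(\surface)$ to the exponent $\zeta^{2N}$ is consistent with the definition $\epsilon=\zeta^{N^2}$ used on the $\skein_\epsilon$ side — but since $\Phi_\zeta$ lands in $\skein_\zeta(\surface)$ and the commutation happens there, only $\zeta$ itself appears, so this is a non-issue. Once the single-arc case is pinned down, the reduction by multiplicativity is routine.
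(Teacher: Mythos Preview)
Your outline follows the same two-step mechanism as the paper---skew-transparency for interior crossings, height exchanges at the boundary---but you add an unnecessary reduction to single arcs. The paper works with an arbitrary tangle diagram $\alpha$ directly: the skew-transparency property \eqref{eq-skew-trans} is stated for $\Phi_\zeta(\alpha)$ with any $\alpha$, so no decomposition is needed. The paper's key preparatory move is to isotope $\alpha$ and $\beta$ into a standard position near the boundary (disjoint in a collar, with all endpoints of $\alpha$ preceding those of $\beta$ along each edge in the positive direction); this is precisely what guarantees $\lvert\alpha\cap\beta\rvert\equiv i_2(\alpha,D_\ast^{-1}(\beta))\pmod 2$, a step you do not justify.

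Your boundary bookkeeping has a genuine gap beyond what you flag. The height-exchange moves \eqref{eq-height-ex} do not cover every state pair: they let you pass an $N$-bundle of state $\mu$ past a $\beta$-strand of state $\nu=+$ (first identity, iterated), and they let a single $-$-strand pass an $N$-bundle (second identity, iterated), but the case ``$N$-bundle of state $\mu$ on top, single $\nu=-$ below'' has the wrong crossing type and is \emph{not} a scalar multiple of the uncrossed picture by \eqref{eq-height-ex} alone. The paper resolves this by applying skew-transparency once more to flip that crossing, yielding the uniform rule \eqref{eq-Frob-ex}, namely $\zeta^{-\mu\nu N}$, from which $\zeta^{-N\delta_e(\alpha)\delta_e(\beta)}$ follows. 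Without this extra use of skew-transparency at the boundary, iterating \eqref{eq-height-ex} will not produce the bilinear exponent you claim.
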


Note $c(\alpha,\beta)$ is only well-defined $\bmod4$, but this is enough to make sense of the coefficient in \eqref{eq-Frob-comm}.

\begin{proof}
For each boundary component of $\surclose$, choose a small regular neighborhood. Isotope the diagrams so that
\begin{enumerate}
\item $\alpha$ and $\beta$ do not intersect in the neighborhoods of boundary components.
\item As one follows the positive direction of each boundary edge, the endpoints of $\alpha$ are all before the endpoints of $\beta$.
\end{enumerate}
This is illustrated in Figure~\ref{fig-frob-std}, where one boundary edge is shown, and the dashed line indicates the choice of the neighborhood. Note the movement of $\beta$ into $D^{-1}(\beta)$ does not pass through $\alpha$. Thus, $\abs{\alpha\cap\beta}\equiv i_2(\alpha,D^{-1}_\ast(\beta))\pmod2$.

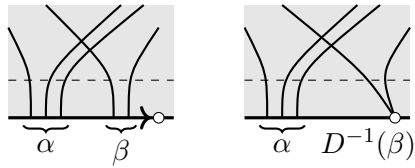
\begin{figure}
\centering
\begin{tikzpicture}[baseline=0cm]
\fill[gray!20] (0,0) rectangle (2.2,1.5);
\draw[dashed] (0,0.5) -- (2.2,0.5);
\draw[wall] (0,0) -- (2.2,0);
\path[wall,tips,->] (0,0) -- (1.9,0);
\begin{scope}[edge]
\draw (1.4,0) .. controls +(0,0.6) .. (0.5,1.5);
\draw (1.6,0) .. controls +(0,0.6) .. (2,1.2);
\draw (0.3,0) .. controls +(0,0.6) .. (0,1.2);
\draw (0.5,0) .. controls +(0,0.6) .. (1.5,1.5);
\draw (0.7,0) .. controls +(0,0.6) .. (1.8,1.5);
\end{scope}
\begin{scope}[thick,decoration=brace]
\draw[decorate] (0.8,-0.1)--(0.2,-0.1);
\draw (0.5,-0.15)node[below]{$\alpha$};
\draw[decorate] (1.7,-0.1)--(1.3,-0.1);
\draw (1.5,-0.15)node[below]{$\beta$};
\end{scope}
\draw[fill=white] (2,0)circle(2pt);
\end{tikzpicture}
\qquad
\begin{tikzpicture}[baseline=0cm]
\fill[gray!20] (0,0) rectangle (2.2,1.5);
\draw[dashed] (0,0.5) -- (2.2,0.5);
\draw[wall] (0,0) -- (2.2,0);
\begin{scope}[edge]
\draw (2,0) .. controls +(-0.4,0.6) .. (0.5,1.5);
\draw (2,0) .. controls +(-0.2,0.6) .. (2.2,1.2);
\draw (0.3,0) .. controls +(0,0.6) .. (0,1.2);
\draw (0.5,0) .. controls +(0,0.6) .. (1.5,1.5);
\draw (0.7,0) .. controls +(0,0.6) .. (1.8,1.5);
\end{scope}
\begin{scope}[thick,decoration=brace]
\draw[decorate] (0.8,-0.1)--(0.2,-0.1);
\draw (0.5,-0.15)node[below]{$\alpha$};
\draw (2,0)node[anchor=45]{$D^{-1}(\beta)$};
\end{scope}
\draw[fill=white] (2,0)circle(2pt);
\end{tikzpicture}
\caption{The standard positions of $\alpha,\beta$ and $D^{-1}(\beta)$}\label{fig-frob-std}
\end{figure}

\begin{figure}
\centering
\begin{tikzpicture}[baseline=1cm]
\fill[gray!20] (0,0) rectangle (2,2);
\begin{scope}[edge]
\draw (0.4,0) .. controls +(1.2,1) .. (0.5,2);
\draw (0.6,0) .. controls +(1.2,1) .. (2,1.5);
\begin{scope}[knot gap=6,background color=gray!20]
\clip (0,0) rectangle (2,2);
\draw[knot=black] (1.3,0) .. controls +(-1.2,1) .. (0,1.5);
\draw[knot=black] (1.7,0) .. controls +(-1.2,1) .. (1.8,2);
\draw[knot=black] (1.5,0) .. controls +(-1.2,1) .. (1.5,2);
\end{scope}
\end{scope}
\draw[dashed] (0,1) -- (2,1);
\draw[wall,->] (0,0) -- (2,0);
\begin{scope}[thick,decoration=brace]
\draw[decorate] (1.8,-0.1)--(1.2,-0.1);
\draw (1.5,-0.15)node[below]{$\Phi_\zeta(\alpha)$};
\draw[decorate] (0.7,-0.1)--(0.3,-0.1);
\draw (0.5,-0.15)node[below]{$\beta$};
\end{scope}
\end{tikzpicture}
\qquad
\begin{tikzpicture}[baseline=0.5cm]
\fill[gray!20] (0,0) rectangle (2,1.5);
\begin{scope}[edge]
\draw (0.3,0) .. controls +(0,0.6) .. (0,1.2);
\draw (0.5,0) .. controls +(0,0.6) .. (1.5,1.5);
\draw (0.7,0) .. controls +(0,0.6) .. (1.8,1.5);
\begin{scope}[knot gap=6,background color=gray!20]
\clip (0,0) rectangle (2,1.5);
\draw[knot=black] (1.4,0) .. controls +(0,0.6) .. (0.5,1.5);
\end{scope}
\draw (1.6,0) .. controls +(0,0.6) .. (2,1.2);
\end{scope}
\draw[dashed] (0,0.5) -- (2,0.5);
\draw[wall,->] (0,0) -- (2,0);
\begin{scope}[thick,decoration=brace]
\draw[decorate] (0.8,-0.1)--(0.2,-0.1);
\draw (0.5,-0.15)node[below]{$\Phi_\zeta(\alpha)$};
\draw[decorate] (1.7,-0.1)--(1.3,-0.1);
\draw (1.5,-0.15)node[below]{$\beta$};
\end{scope}
\end{tikzpicture}

\caption{The schematic diagrams $\Phi_\zeta(\alpha)\beta$ and $\beta\Phi_\zeta(\alpha)$}\label{fig-frob-comm}
\end{figure}

Looking at Figure~\ref{fig-frob-comm}, the process of turning $\Phi_\zeta(\alpha)\beta$ into $\beta\Phi_\zeta(\alpha)$ involves two parts: crossing changes corresponding to $\alpha\cap\beta$ and height exchanges near the boundary. Each crossing change involves a factor of $\zeta^{2N}$ by skew-transparency \eqref{eq-skew-trans}. For height exchanges, note in $\Phi_\zeta(\alpha)$, arcs ending on the boundary comes in $N$ parallel copies. By applying the height exchange moves \eqref{eq-height-ex} repeatedly, we get
\begin{equation}\label{eq-heightex-N}
\crosswall{->}{n}{$N\mu$}{$+$}=\zeta^{-\mu N}\twowall{->}{$+$}{$N\mu$},\qquad
\crosswall{->}{n}{$-$}{$N\mu$}=\zeta^{\mu N}\twowall{->}{$N\mu$}{$-$},
\end{equation}
where $N\mu$ indicates $N$ parallel strands, all with the state $\mu$. We can combine the relations above into a single one
\begin{equation}\label{eq-Frob-ex}
\crosswall{->}{n}{$N\mu$}{$\nu$}=\zeta^{-\mu\nu N}\twowall{->}{$\nu$}{$N\mu$}.
\end{equation}
Here the $\nu=-$ case is obtained from the second equation of \eqref{eq-heightex-N} and skew-transparency. Applying this to $\Phi_\zeta(\alpha)\beta$ in the neighborhoods of boundary edges, we get the remaining factors.
\end{proof}

\begin{corollary}\label{cor-Xzeta}
Let
\begin{equation}
X_\zeta=\begin{cases}
\skein_\epsilon(\surface),&d=1,\\
\matchedS_\epsilon(\surface),&d=2,\\
\evenS_\epsilon(\surface),&d=4.
\end{cases}
\end{equation}
Then $\Phi_\zeta(X_\zeta)\subset\skein_\zeta(\surface)$ is central.
\end{corollary}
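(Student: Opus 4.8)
The plan is to reduce the statement directly to Lemma~\ref{lemma-Frob-comm}. Since $\skein_\zeta(\surface)$ is spanned over $R$ by tangle diagrams, $\Phi_\zeta$ is an $R$-algebra homomorphism, and the center is an $R$-subalgebra, it will be enough to show that for every diagram $\alpha$ spanning $X_\zeta$ (an arbitrary stated tangle diagram when $d=1$, a matching one when $d=2$, an even one when $d=4$) and every stated tangle diagram $\beta$, the element $\Phi_\zeta(\alpha)$ commutes with $\beta$. By Lemma~\ref{lemma-Frob-comm} this is exactly the assertion that $\zeta^{c(\alpha,\beta)N}=1$; since $\order(\zeta)=n=dN$, this in turn is equivalent to $c(\alpha,\beta)\equiv0\pmod d$ (recall $c(\alpha,\beta)$ is well defined modulo $4$ and $d\mid4$, so this residue makes sense). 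So the whole corollary reduces to three parity/divisibility checks on
\[
c(\alpha,\beta)=2\,i_2\bigl(\alpha,D^{-1}_\ast(\beta)\bigr)-\sum_{e\in\quasi_\partial}\delta_e(\alpha)\delta_e(\beta),
\]
and the definitions of the matching and even subalgebras are exactly what is needed to pass them.

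The case $d=1$ is vacuous. For $d=2$ I would argue that the first term of $c(\alpha,\beta)$ is already even, so the task is to show $\sum_{e\in\quasi_\partial}\delta_e(\alpha)\delta_e(\beta)$ is even. For a boundary edge $e$ one has $\delta_e(\gamma)=\bar{k}_\gamma(e)-\bar{k}_\gamma(\hat{e})$ with $\bar{k}_\gamma(\hat{e})$ even, hence $\delta_e(\gamma)\equiv\bar{k}_\gamma(e)\equiv i_2(\gamma,e)\pmod2$; because $\alpha$ is matching, the residues $i_2(\alpha,e)$ over $e\in\quasi_\partial$ all equal one common value $\mu\in\ints/2$. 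Therefore
\[
\sum_{e\in\quasi_\partial}\delta_e(\alpha)\delta_e(\beta)\equiv\mu\sum_{e\in\quasi_\partial}i_2(\beta,e)=\mu\, i_2\Bigl(\beta,\sum_{e\in\quasi_\partial}[e]\Bigr)\pmod2,
\]
and $\sum_{e\in\quasi_\partial}[e]=0$ in $H^\ast$ since the boundary edges together make up $\partial\surclose$, which is null-homologous in $H_1(\surclose;\ints/2)$ and hence maps to $0$ in $H^\ast=H_1(\surclose,\marked_\partial;\ints/2)$. So the sum is even, as required.

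For $d=4$ the hypothesis that $\alpha$ is even provides precisely the two facts I need: its homology class in $H$ is zero, so $i_2(\alpha,D^{-1}_\ast(\beta))=0$ and the first term of $c(\alpha,\beta)$ vanishes outright; and $\delta_e(\alpha)\equiv0\pmod4$ for every $e\in\quasi_\partial$, so each summand $\delta_e(\alpha)\delta_e(\beta)$, and therefore the whole sum, is divisible by $4$. Hence $c(\alpha,\beta)\equiv0\pmod4$, and once all three cases are in hand the corollary follows by the spanning/linearity remarks above. I do not expect a real obstacle: the entire content is matching the definitions of ``matching'' and ``even'' against the explicit exponent in Lemma~\ref{lemma-Frob-comm}, and the only step that is genuinely homological rather than bookkeeping is the identity $\sum_{e\in\quasi_\partial}[e]=0$ in $H^\ast$ invoked in the $d=2$ case.
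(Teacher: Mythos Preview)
Your proposal is correct and follows essentially the same route as the paper: reduce via Lemma~\ref{lemma-Frob-comm} to checking $c(\alpha,\beta)\equiv 0\pmod d$, then handle the three cases by inspection. The only cosmetic difference is in the $d=2$ step, where the paper observes directly that $\sum_{e\in\quasi_\partial}\delta_e(\beta)$ is even because it is the mod~$2$ count of endpoints of $\beta$, whereas you reach the same conclusion via the homological identity $\sum_{e\in\quasi_\partial}[e]=0$ in $H^\ast$; both work, though the paper's argument is a bit more elementary.
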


\begin{proof}
Suppose $\alpha\in X_\zeta$ and $\beta\in\skein_\zeta(\surface)$ are represented by diagrams.

Case 1 ($d=1$). Since $n=N$, the coefficient in \eqref{eq-Frob-comm} is always $1$. Therefore, $\Phi_\zeta(\alpha)$ commutes with $\beta$ without extra assumptions. (See also \cite[Theorem~1.2]{KQ})

Case 2 ($d=2$). Since $n=2N$, we only need to determine $c(\alpha,\beta)\bmod2$. Thus, the $i_2$ term in \eqref{eq-Frob-comm} has no effect. In the other term, all $\delta_e(\alpha)$ are the same $\bmod2$ because of the matching condition, so it can be factored out of the sum. On the other hand, for any diagram $\beta$, the $\bmod2$ sum of boundary gradings is zero since it agrees with the $\bmod2$ count of endpoints. Thus, $c(\alpha,\beta)$ is even when $\alpha$ is matched, which means $\Phi_\zeta(\alpha)$ commutes with $\beta$.

Case 3 ($d=4$). The $i_2$ term in \eqref{eq-Frob-comm} vanishes since an even $\alpha$ has trivial $\bmod2$ homology. In addition, $\delta_e(\alpha)$ is divisible by 4 by definition. Hence, $c(\alpha,\beta)$ is divisible by $4$, which means $\Phi_\zeta(\alpha)$ commutes with $\beta$.
\end{proof}

\begin{lemma}\label{lemma-central-boundary}
Suppose $e_1,\dotsc,e_r$ are the boundary edges on a boundary component $C$ with $r$ ideal points, ordered consecutively with an arbitrary starting edge.
\begin{enumerate}
\item If $r$ is even, the element $X_{\hat{e}_1}^kX_{\hat{e}_2}^{n-k}\dotsm X_{\hat{e}_r}^{n-k}$ is central for $0\le k\le n$.
\item If $r$ is odd, the element $X_{e_1}^NX_{e_2}^N\dotsm X_{e_r}^N=\Phi_\zeta(X_{e_1}X_{e_2}\dotsm X_{e_r})$ is central.
\end{enumerate}
\end{lemma}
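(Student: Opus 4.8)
The plan is to pass to the quantum torus via the quantum trace embedding $\phi$ of Theorem~\ref{thm-qtr}, under which every generator $X_a$ with $a\in\bar\quasi$ maps to a single Weyl-ordered monomial $c(X_a)z^{\bar k_a}$ (Theorem~\ref{thm-qtr}(2)); hence $\phi$ of any product of powers of such generators is again a monomial $c\,z^{\bar k}$ whose exponent $\bar k$ is the corresponding combination of the $\bar k_a$. The key reduction is: if $\phi(P)=c\,z^{\bar k}$ is a monomial, then $P$ is central in $\skein_\zeta(\surface)$ if and only if $\langle\bar k,\bar k_a\rangle_{\bar Q}\equiv 0\pmod n$ for every $a\in\bar\quasi$, equivalently $\bar K\bar Q\bar k\equiv 0\pmod n$. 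Necessity is clear since $\phi(X_a)$ is a monomial. Sufficiency uses that the $\bar k_a$ generate the balanced subgroup $\balLambda$ (Lemma~\ref{lemma-bal-basis}), that $x\mapsto\langle\bar k,x\rangle_{\bar Q}\bmod n$ is additive, and that every monomial occurring in the image of $\phi$ has balanced degree (its leading term by Theorem~\ref{thm-qtr}(1) and Proposition~\ref{prob-bal}, the lower ones by induction on $\dego$).

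For part~(1), $\deg P=\sum_j m_j\bar k_{\hat e_j}$ where the exponents $m_j$ run through $k$ and $n-k$ along the cyclic order $e_1,\dots,e_r$ on $C$; that $r$ is even is exactly what allows these two values to be assigned consistently around the cycle. I would substitute this into $\bar K\bar Q$, simplify using the block identity computed in the proof of Theorem~\ref{thm-center-generic} together with the description of $P_+J$ from \eqref{eq-PJk} (an entry of $P_+J\hat k$ at an edge $e$ equals $\hat k(b_1)+\hat k(b_2)$ for the two boundary edges counterclockwise-adjacent to $e$), and reduce the congruence to a numerical identity about consecutive boundary edges of $C$ that the alternating pattern $k,\,n-k$ is designed to satisfy modulo $n$.

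For part~(2), rather than feed $\deg\Phi_\zeta(X_{e_1}\cdots X_{e_r})=N\sum_j\bar k_{e_j}$ (from Theorem~\ref{thm-thread}(2) and Corollary~\ref{cor-deg-lt}(1)) through the same criterion, it is cleaner to apply Lemma~\ref{lemma-Frob-comm} directly with $\gamma=X_{e_1}\cdots X_{e_r}$. For any diagram $\beta$, $\delta_e(\gamma)=0$ for boundary edges off $C$ while $\delta_{e_j}(\gamma)=2$ for each $e_j$ on $C$ (each $D(e_j)$ has one $+$ endpoint on $e_j$ and one on a neighbour), so $\sum_e\delta_e(\gamma)\delta_e(\beta)=2\sum_j\delta_{e_j}(\beta)$; and since $[\gamma]=D_\ast\bigl(\sum_j[e_j]\bigr)$ is the class of a loop hugging $C$, $i_2(\gamma,D_\ast^{-1}\beta)$ equals the $\bmod 2$ number of endpoints of $\beta$ on $C$, which is also $\sum_j\delta_{e_j}(\beta)\bmod 2$. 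Hence $c(\gamma,\beta)=2\,i_2(\gamma,D_\ast^{-1}\beta)-2\sum_j\delta_{e_j}(\beta)\equiv 0\pmod 4$, so $\zeta^{c(\gamma,\beta)N}=1$ because $n=Nd$ with $d\mid 4$, i.e. $\Phi_\zeta(\gamma)$ is central for every root of unity. The identity $\Phi_\zeta(X_{e_1}\cdots X_{e_r})=X_{e_1}^N\cdots X_{e_r}^N$ follows from multiplicativity of $\Phi_\zeta$ and the remark after Theorem~\ref{thm-thread}, once one checks the resulting powers of $\zeta$ cancel.

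The step I expect to be the main obstacle is the numerical congruence in part~(1): getting the edge coloring $\bar k_{\hat e}$ exactly right (the two half-edges at each ideal point, the diagonal behaviour of $P_+$, the sign conventions built into $\bar Q$) and then verifying the resulting mod-$n$ identity around the boundary cycle. This is precisely the computation that forces the otherwise opaque form of the generators — alternating exponents $k,n-k$ with $X_{\hat e}$ for $r$ even, all-$N$ exponents with $X_e$ for $r$ odd.
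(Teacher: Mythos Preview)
Your treatment of part~(2) is essentially the paper's: both apply Lemma~\ref{lemma-Frob-comm} to $\gamma=X_{e_1}\cdots X_{e_r}$, use that $\gamma$ is homologous to the boundary component $C$ so that $i_2(\gamma,D_\ast^{-1}\beta)$ counts endpoints of $\beta$ on $C$ mod~$2$, observe $\delta_e(\gamma)=2$ for each $e\subset C$ and $0$ elsewhere, and conclude $c(\gamma,\beta)\equiv 0\pmod 4$.

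For part~(1) the paper takes a more direct route that bypasses the quantum torus entirely. It invokes the explicit commutation relation
\[
X_{\hat e}\,\alpha=\zeta^{\delta_e(\alpha)+\delta_{e'}(\alpha)}\,\alpha\,X_{\hat e}
\]
from \cite[Lemma~4.5]{LYSL2}, where $e'$ is the boundary edge counterclockwise to $e$, and then simply commutes the product $X_{\hat e_1}^k X_{\hat e_2}^{n-k}\cdots X_{\hat e_r}^{n-k}$ past $\alpha$ one factor at a time. Each $\delta_{e_i}(\alpha)$ appears exactly twice in the accumulated exponent, once with multiplicity $k$ and once (as the primed neighbour of an adjacent edge) with multiplicity $n-k$, so the total exponent is $n\sum_i\delta_{e_i}(\alpha)\equiv 0\pmod n$. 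Your quantum-torus reduction to $\bar K\bar Q\bar k\equiv 0\pmod n$ is valid, but the numerical verification you flag as the main obstacle is precisely this same telescoping rewritten in matrix form once you unwind the contributions of $\bar k_{\hat e}$ and $P_+J$; the paper's argument reaches the conclusion with less machinery and no deferred computation.

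One small repair: your justification that every monomial appearing in $\phi(\beta)$ has balanced degree ``by induction on $\dego$'' does not work as written, since the lower-order part of $\phi(\alpha)$ need not itself lie in the image of $\phi$. The clean way to get this is Theorem~\ref{thm-qtr}(3): the image of $\phi$ sits inside the subalgebra generated by the monomials $\phi(X_a)^{\pm1}$ (together with the central $z_v^{\pm1}$), and the exponents $\bar k_a$ are balanced by construction.
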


If $d=1,2$, let $B_\zeta$ be the set of the first type of elements. If $d=4$, let $B_\zeta$ be the set of both types of elements. The distinction comes from the proof of Lemma~\ref{lemma-lt-cancel}.

\begin{remark}\label{rem-arc-power}
Since we use positively ordered diagrams, powers of arcs such as $X_{\hat{e}_1}^k$ is not represented by the simple diagram with copies of the arc. However, the difference is a power of $q=\zeta$ using height exchange moves. We will ignore the difference because it does not affect centrality.
\end{remark}

\begin{proof}
Let $\alpha\in\skein_\zeta(\surface)$ be an arbitrary diagram. We want to show the elements above commute with $\alpha$.

Consider an element $\beta$ of the first type. By \cite[Lemma~4.5]{LYSL2}, for a boundary edge $e\in\quasi_\partial$
\begin{equation}
X_{\hat{e}}\alpha=\zeta^{\delta_e(\alpha)+\delta_{e'}(\alpha)}\alpha X_{\hat{e}}.
\end{equation}
Here $e'$ is the edge counterclockwise to $e$, or equivalently, the edge other than $e$ with an endpoint of $X_{\hat{e}}$. Therefore,
\begin{align}
\beta\alpha&=(X_{\hat{e}_1}^kX_{\hat{e}_2}^{n-k}\dotsm X_{\hat{e}_r}^{n-k})\alpha\\
&=(\zeta^{\delta_{e_r}(\alpha)+\delta_{e'_r}(\alpha)})^{n-k}X_{\hat{e}_1}^kX_{\hat{e}_2}^{n-k}\dotsm \alpha X_{\hat{e}_r}^{n-k}=\dotsb\\
&=
(\zeta^{\delta_{e_1}(\alpha)+\delta_{e'_1}(\alpha)})^{k}
\dotsm
(\zeta^{\delta_{e_r}(\alpha)+\delta_{e'_r}(\alpha)})^{n-k}
\alpha(X_{\hat{e}_1}^kX_{\hat{e}_2}^{n-k}\dotsm X_{\hat{e}_r}^{n-k}).
\end{align}
Each $\delta_{e_i}$ appears twice with multiplicities $k$ and $n-k$, one of which is as the prime of another edge. Thus, the coefficient in the last line simplifies to $\zeta^{n\sum\delta_{e_i}(\alpha)}=1$, which means $\beta$ commutes with $\alpha$.

Now consider an element $\beta=\Phi_\zeta(\beta')$ of the second type, where $\beta'=X_{e_1}X_{e_2}\dotsm X_{e_r}\in\skein_\epsilon(\surface)$. By Lemma~\ref{lemma-Frob-comm}, we need to show $c(\beta',\alpha)\equiv0\pmod4$. First note that $\beta'$ is homologous to the boundary component $C$. Thus, $i_2(\beta',D_\ast^{-1}(\alpha))$ is simply the number of endpoints of $\alpha$ on $C$. Next consider the sum over boundary edges. Clearly $\delta_e(\beta')=2$ if $e$ is on $C$ and zero otherwise, so we have
\begin{equation}
c(\beta',\alpha)=2\abs{\alpha\cap C}-2\sum_{e\subset C}\delta_e(\alpha),
\end{equation}
which is 4 times the number of $-$ states on $C$. Thus, $c(\beta',\alpha)$ is a multiple of $4$, so $\beta$ commutes with $\alpha$.
\end{proof}

\subsection{Center at a root of unity}

We are finally ready to determine the center at a root of unity.

\begin{theorem}\label{thm-center-root-1}
Suppose $\surface$ is connected, triangulable, and have nonempty boundary. The center of $\skein_q(\surface)$ at $q=\zeta$ is
\begin{equation}
Z_\zeta=\Phi_\zeta(X_\zeta)[\peripheral][B_\zeta^{-1}]\cap\skein_\zeta(\surface),
\end{equation}
where $\peripheral$ represents the set of peripheral curves, and $B_\zeta$ is the set of central elements defined in Lemma~\ref{lemma-central-boundary}. In other words, the center $Z_\zeta$ is spanned by elements $\gamma$ such that $\gamma\beta$ is of the form $c\Phi_\zeta(\gamma')$ where $c\in\perialg$ and $\beta$ is a product of elements in $B_\zeta$.

If the order of $\zeta$ is odd ($d=1$), then the inverse on $B_\zeta$ can be removed. In other words, the center is the subalgebra of $\skein_\zeta(\surface)$ generated by $\Phi_\zeta(\skein_1(\surface))$, the peripheral curves $\peripheral$, and $B_\zeta$.
\end{theorem}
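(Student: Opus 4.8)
The plan is to prove both inclusions. For the easy direction, $\Phi_\zeta(X_\zeta)[\peripheral][B_\zeta^{-1}]\cap\skein_\zeta(\surface)\subset Z_\zeta$: by Corollary~\ref{cor-Xzeta} the image $\Phi_\zeta(X_\zeta)$ is central, by Corollary~\ref{cor-peri} the peripheral curves are central, and by Lemma~\ref{lemma-central-boundary} the elements of $B_\zeta$ are central; hence any element of the localized algebra $\Phi_\zeta(X_\zeta)[\peripheral][B_\zeta^{-1}]$ that happens to lie in $\skein_\zeta(\surface)$ is central. Equivalently, if $\gamma\beta=c\,\Phi_\zeta(\gamma')$ with $c\in\perialg$, $\gamma'\in X_\zeta$, and $\beta$ a product of $B_\zeta$-elements, then since the right side is central and $\beta$ is a central non-zero-divisor (use Corollary~\ref{cor-deg-lt} — the leading term is a monomial, so $\beta$ is not a zero divisor), $\gamma$ is central.

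The hard direction is to show every central $\gamma\in\skein_\zeta(\surface)$ lies in $Z_\zeta$. Here I would argue by induction on the degree $\deg\gamma\in\ints^{\bar\quasi}$ with respect to the filtration $\{F_{\bar k}\}$, the same strategy used for Theorem~\ref{thm-center-generic}. Let $\bar k=\deg\gamma$ and $\lead(\gamma)=c_0\alpha_0$. Centrality forces $\lead(\gamma\beta)=\lead(\beta\gamma)$ for all $\beta$, which via Corollary~\ref{cor-deg-lt}(2) gives $\langle\bar k_\beta,\bar k\rangle_{\bar Q}\equiv 0$ as an exponent, i.e. $\zeta^{\langle\bar k_\beta,\bar k\rangle_{\bar Q}}=1$ for all admissible $\bar k_\beta$. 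Taking $\beta=X_a$, $a\in\bar\quasi$, this says $\bar K\bar Q\bar k\equiv 0\pmod n$ (componentwise). Running the same block-matrix computation as in the proof of Theorem~\ref{thm-center-generic} — but now mod $n$ rather than exactly — and using Lemma~\ref{lemma-H-inverse}, I expect to extract a description of the possible leading-degree vectors $\bar k$: modulo the sublattice generated by $N\balLambda$ (degrees of Frobenius images, by Theorem~\ref{thm-thread}(2)) and the degrees of the $B_\zeta$ generators and the peripheral curves (which have trivial $\dego$), the congruence $\bar K\bar Q\bar k\equiv 0$ has no further solutions. Concretely, $\hat k$ is constrained mod $N$ by the adjacency relations $\hat k(b_1)+\hat k(b_2)\equiv 0$ coming from \eqref{eq-PJk}, and these are exactly the congruences cut out by the $X_{\hat e}$-type elements of $B_\zeta$ (for even boundary components) together with the $\Phi_\zeta(X_{e_1}\cdots X_{e_r})$-type elements (for odd ones); similarly the face-parity and punctured-monogon congruences on $k$ are realized by $N\balLambda$. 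So after multiplying $\gamma$ by a suitable monomial $\beta^{-1}$ in $B_\zeta$ (allowed in the localization) and a peripheral polynomial, the leading term matches that of some $c\,\Phi_\zeta(\gamma')$ with $\gamma'\in X_\zeta$; subtracting, the difference is again central of strictly smaller degree, and induction closes the argument. The key technical point I would need is that the congruence analysis actually lands $\gamma'$ in $X_\zeta$ and not merely in $\skein_\epsilon(\surface)$ — i.e. when $d=2$ the leading vector forces $\gamma'$ matching, and when $d=4$ it forces $\gamma'$ even; this is the mod-$d$ refinement of Corollary~\ref{cor-Xzeta}'s hypotheses and is where the case distinction in the definition of $B_\zeta$ (Lemma~\ref{lemma-central-boundary}) is needed, as the remark there anticipates.

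For the final sentence — when $n$ is odd ($d=1$), the inverses on $B_\zeta$ can be dropped, so $Z_\zeta$ is just the subalgebra generated by $\Phi_\zeta(\skein_1(\surface))$, the $X_v$, and $B_\zeta$ — the point is that when $d=1$ we have $X_\zeta=\skein_\epsilon(\surface)=\skein_1(\surface)$ (since $\epsilon=\zeta^{N^2}$ and $N=n$, so $\epsilon=1$), and in the induction above one never actually needs to invert $\beta$: at each stage the leading vector $\bar k$ of the central element, reduced mod $n=N$, is a nonnegative-combination (admissibility keeps components $\ge 0$, as in the $\hat k\ge 0$ step of Theorem~\ref{thm-center-generic}'s proof) of the degrees of the positive generators $\Phi_\zeta(X_a)$ and the $B_\zeta$ elements, so $\gamma$ minus a genuine product (no inverse) of these generators has smaller degree. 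I would phrase this as: the semigroup of degrees of central elements is generated, over the peripheral directions, by $N\Lambda$ together with the $\deg$ of the $B_\zeta$ generators, hence lies in the image of the stated subalgebra, and induction finishes. The main obstacle throughout is the bookkeeping in this last step — verifying that the lattice/semigroup of solutions to $\bar K\bar Q\bar k\equiv 0\pmod n$ is exactly generated by the advertised central elements, with the correct $d$-dependent parity constraints — rather than any conceptual difficulty; the generic-$q$ proof of Theorem~\ref{thm-center-generic} is the template, and the root-of-unity case is its "mod $n$" shadow.
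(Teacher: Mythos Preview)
Your outline for the main statement is essentially the paper's own: prove the easy inclusion from Corollaries~\ref{cor-peri}, \ref{cor-Xzeta} and Lemma~\ref{lemma-central-boundary}, then for the reverse inclusion run a leading-term induction driven by the $\bmod\,n$ version of the system $\bar K\bar Q\bar k\equiv 0$ from the generic proof. The paper packages the inductive step as a separate lemma (Lemma~\ref{lemma-lt-cancel}), and your identification of the role of the $d$-dependent constraints (matching for $d=2$, even for $d=4$) and of the adjacency congruences on $\hat k$ is accurate. One point you gloss over: after multiplying the central element by a product $\beta$ of $B_\zeta$-elements to make the degree divisible by $N$ and choosing $\gamma'$ with $\deg\Phi_\zeta(\gamma')=\deg(\alpha\beta)$, you must still check that $\Phi_\zeta(\gamma')$ literally factors as $\gamma\beta$ \emph{inside} $\skein_\zeta(\surface)$, so that the candidate $\gamma$ lies in the skein algebra and not merely in the localization. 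The paper extracts this from Theorem~\ref{thm-thread}(2): the arc components of $\Phi_\zeta(\gamma')$ are the same in every term of the threading sum, and the leading diagram visibly contains $\beta$. Your phrasing (``multiply $\gamma$ by $\beta^{-1}$ \ldots\ subtracting, the difference is central of smaller degree'') runs the induction in the localization, which does not quite close.

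The real gap is your argument for the stronger $d=1$ assertion. Writing ``admissibility keeps components $\ge 0$'' does not show that the leading coloring $\bar k$ decomposes as $N\bar l+\bar m$ with $\bar l\in\Lambda$ and $\bar m$ a \emph{nonnegative} combination of $B_\zeta$-degrees: the triangle inequalities and the bound $\bar k(\hat e)\le 2\bar k(e)$ are not obviously preserved when you subtract off the $B_\zeta$ contribution, so you cannot conclude that the remainder is the degree of an actual basis element. The paper's argument here is geometric rather than lattice-theoretic. From the congruences one computes, for each boundary corner, that the number of corner arcs in the leading diagram of $\alpha$ is at least $(\hat k(e)\bmod N)$; the increasingly-stated condition then forces those arcs to carry exactly the states of $X_{\hat e}$, so that $\beta_1=\prod_e X_{\hat e}^{\hat k(e)\bmod n}$ sits as a subdiagram of $\lead(\alpha)$. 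Only after physically excising this subdiagram does one know the remaining diagram $\alpha'$ has an admissible coloring divisible by $N$, hence is $\Phi_\zeta$ of something. Without this corner-arc count, the inductive step for the un-localized statement does not go through.
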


\begin{proof}
$Z_\zeta$ is central by the previous lemmas. Suppose $\alpha\in \skein_\zeta(\surface)$ is central. By Lemma~\ref{lemma-lt-cancel}, there exists an element $\gamma\in Z_\zeta$ such that $\lead(\alpha)=\lead(\gamma)$. Then the element $\alpha-\gamma$ is central and has lower degree. Since the degree is defined using a well ordering on edge colorings, after repeating the process finitely many times, we can write $\alpha$ as a sum of elements in $Z_\zeta$.
\end{proof}

\begin{lemma}\label{lemma-lt-cancel}
Suppose $\alpha\in\skein_\zeta(\surface)$ is central. There exists an element $\gamma\in Z_\zeta$ such that $\lead(\alpha)=\lead(\gamma)$
\end{lemma}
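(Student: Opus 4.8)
The plan is to determine $\deg\alpha$ exactly and then build $\gamma$ by hand out of the generators of $Z_\zeta$.

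\emph{Reduction to a statement about edge colorings.} By Corollary~\ref{cor-deg-lt}, $\lead(\alpha\beta)=q^{\langle\deg\alpha,\deg\beta\rangle_{\bar{Q}}}\lead(\beta\alpha)$ for every $\beta\in\skein_\zeta(\surface)$; centrality of $\alpha$ forces $\lead(\alpha\beta)=\lead(\beta\alpha)$, hence $\zeta^{\langle\deg\alpha,\deg\beta\rangle_{\bar{Q}}}=1$, i.e.\ $\langle\deg\alpha,\deg\beta\rangle_{\bar{Q}}\equiv0\pmod n$ for all $\beta$. Taking $\beta=X_b$ for $b\in\bar\quasi$ and recalling that $\bar{k}_b$ is the $b$-th row of $\bar{K}$, this says
\begin{equation*}
\bar{K}\bar{Q}\,\bar{k}\equiv0\pmod n,\qquad\text{where }\bar{k}:=\deg\alpha.
\end{equation*}
Since $\perialg\subseteq Z_\zeta$ consists of degree-$0$ elements and multiplication by $c\in\perialg$ simply scales leading coefficients, it suffices to produce $\gamma_1\in Z_\zeta$ with $\deg\gamma_1=\bar{k}$ whose leading coefficient is a unit of $R$: writing $\lead\alpha=c_0\alpha_0$ with $c_0\in\perialg$ and $u\in R^\times$ for the leading coefficient of $\gamma_1$, the element $\gamma:=c_0u^{-1}\gamma_1\in Z_\zeta$ then has $\lead\gamma=\lead\alpha$ (the leading basis element of $\gamma_1$ is forced to be $\alpha_0$, the unique basis element with coloring $\bar{k}$).

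\emph{Solving the central congruence.} I would mimic the proof of Theorem~\ref{thm-center-generic}: left-multiplying $\bar{K}\bar{Q}$ by the unimodular integer matrix $\left(\begin{smallmatrix}I&0\\-J^T&I\end{smallmatrix}\right)$ and using $P_+(JJ^T-Q)=2I$ (Lemma~\ref{lemma-H-inverse}), the congruence $\bar{K}\bar{Q}\,\bar{k}\equiv0$ becomes, after writing $\bar{k}=(k,2\hat{k})$ along $\bar\quasi=\quasi\sqcup\hat\quasi_\partial$ and putting $k_\partial=J^Tk$,
\begin{equation*}
P_+Jk_\partial-2k-2P_+J\hat{k}\equiv0,\qquad 2k_\partial\equiv0\pmod n.
\end{equation*}
Unwinding these edge by edge as in \eqref{eq-PJk}, and using that $\bar{k}$ is admissible (hence balanced and non-negative), one reads around each boundary component $C$ that the relevant combinations of the $k_\partial(e)$ and $\hat{k}(e)$ ($e\subset C$) become divisible by $N$ once a suitable non-negative integer combination $\bar{m}$ of the colorings $\deg\beta$, $\beta\in B_\zeta$, is subtracted, the leftover divisibility modulo $d$ recording precisely whether $\bar{l}:=(\bar{k}+\bar{m})/N$ is the coloring of a basis element of $X_\zeta$ — unrestricted when $d=1$, \emph{matching} when $d=2$, \emph{even} when $d=4$. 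Matching this $d=1/2/4$ trichotomy to $\skein_\epsilon(\surface)$ / $\matchedS_\epsilon(\surface)$ / $\evenS_\epsilon(\surface)$ is the main obstacle, and is where the precise shape of $B_\zeta$ in Lemma~\ref{lemma-central-boundary} (type~(1) for $d=1,2$; both types for $d=4$) gets used.

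\emph{Realizing the degree inside $Z_\zeta$.} Fix a $B_\zeta$-monomial $\beta$ with $\deg\beta=\bar{m}$ and choose $\gamma'\in X_\zeta$ with $\deg\gamma'=\bar{l}$, taking care to present $\gamma'$ so that it visibly contains enough copies of the boundary arcs $X_{\hat{e}},X_e$ occurring in $\beta$; this is possible because $\Phi_\zeta(X_{\hat{e}})$ and $\Phi_\zeta(X_e)$ equal $X_{\hat{e}}^N$ and $X_e^N$ up to powers of $\zeta$ and $\Phi_\zeta$ is multiplicative, so that $\Phi_\zeta(\gamma')=\zeta^{m}\gamma_1\beta$ for an honest $\gamma_1\in\skein_\zeta(\surface)$ built as a product of generators. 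Then $\gamma_1\in\Phi_\zeta(X_\zeta)[\peripheral][B_\zeta^{-1}]\cap\skein_\zeta(\surface)=Z_\zeta$. By Theorem~\ref{thm-thread}(2), $\deg\Phi_\zeta(\gamma')=N\deg\gamma'=\bar{k}+\bar{m}$ and its leading term is a power of $\zeta$ times the basis element obtained from $N$ parallel, increasingly stated copies of $\gamma'$ — in particular its leading coefficient is a unit; since the leading coefficients of $\beta$ and of $\Phi_\zeta(\gamma')$ are units and $\deg(\gamma_1\beta)=\deg\gamma_1+\deg\beta$ (Corollary~\ref{cor-deg-lt}), the leading coefficient of $\gamma_1$ is a unit and $\deg\gamma_1=\bar{k}$. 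The Reduction then finishes the proof. Besides the combinatorics of the previous paragraph, the delicate point here is this last bookkeeping: arranging the divisibility of the exponents so that the factorization $\Phi_\zeta(\gamma')=\zeta^m\gamma_1\beta$ genuinely holds in $\skein_\zeta(\surface)$, not merely in its localization at $B_\zeta$.
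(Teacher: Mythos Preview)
Your outline matches the paper's proof closely: derive $\bar K\bar Q\,\bar k\equiv0\pmod n$, absorb a $B_\zeta$-monomial $\beta$ so that the residual degree is $N$ times an admissible coloring $\bar l$, realize $\bar l$ by some $\gamma'\in X_\zeta$, and split $\beta$ back off from $\Phi_\zeta(\gamma')$. The paper fills in your middle paragraph with an explicit $d=1,2,4$ case split --- this is where the matching and even conditions on $\gamma'$ are actually verified and where the two kinds of elements of $B_\zeta$ enter --- and for $d=1$ it also proves the sharper statement that the diagram of $\lead(\alpha)$ already contains the needed boundary arcs, so the localization at $B_\zeta$ is unnecessary there.

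One point in your last paragraph needs adjustment. You propose to present $\gamma'$ in $\skein_\epsilon$ as a product containing copies of $X_{\hat e},X_e$ and then use multiplicativity of $\Phi_\zeta$; but the exponents of $X_{\hat e}$ in $\beta$ are in general strictly between $0$ and $N$, so pulling a whole $X_{\hat e}$ out of $\gamma'$ and applying $\Phi_\zeta$ overshoots, and there is no fractional factor available in $\skein_\epsilon$. The paper sidesteps this by factoring directly in $\skein_\zeta$: the leading diagram of $\Phi_\zeta(\gamma')$ coincides with that of $\alpha\beta$, namely $\alpha_0\cup\beta$, and by Theorem~\ref{thm-thread}(2) every term of $\Phi_\zeta(\gamma')$ carries the \emph{same} arc components as the leading one. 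Hence the boundary-parallel arcs making up $\beta$ appear in every term and can be split off uniformly, giving $c\Phi_\zeta(\gamma')=\gamma\beta$ with $\gamma\in\skein_\zeta(\surface)$. This replaces your ``present $\gamma'$ carefully'' step and is what makes the factorization genuinely hold in $\skein_\zeta$ rather than only after localization.
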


\begin{proof}
Using the same notations as Theorem~\ref{thm-center-generic}, let $\bar{k}=(k,2\hat{k})$ be the degree of $\alpha$, and $k_\partial:=J^Tk$. The same argument leads to the $\bmod n$ version of \eqref{eq-center-generic}.
\begin{equation}\label{eqn-center}
\begin{split}
P_+Jk_\partial-2k-2P_+J\hat{k}&\equiv0\pmod{n},\\
2k_\partial&\equiv0\pmod{n}.
\end{split}
\end{equation}

\textbf{Case 1} ($d=1$.) Since $n=N$ is odd in this case, the relations are reduced to
\begin{equation}\label{eqn-center-1}
\begin{split}
k+P_+J\hat{k}&\equiv0\pmod{N},\\
k_\partial&\equiv0\pmod{N}.
\end{split}
\end{equation}
Just like the proof of Theorem~\ref{thm-center-generic}, by multiplying the first relation by $J^T$, we can show that if $a$ and $b$ are adjacent boundary edges, then $\hat{k}(a)+\hat{k}(b)$ is divisible by $N$. If there are $r$ ideal points, then by going around the boundary, $\hat{k}(a)\equiv(-1)^r\hat{k}(a)\pmod{N}$. Since $N$ is odd, nontrivial $\bmod{N}$ solutions exist only when $r$ is even. Let
\begin{equation}
\beta=\prod_{e\in\quasi_\partial}X_{\hat{e}}^{-\hat{k}(e)\bmod{n}}.
\end{equation}
Here and in the rest of the proof, $\bmod{n}$ represents the operation that takes the remainder of the division in the range $0,1,\dots,n-1$ if an integer is expected. $\beta$ is a product of elements in $B_\zeta$ if ordered correctly. $\alpha\beta$ is still central since $\beta$ is. Its degree $\bar{k}'=(k',2\hat{k}')$ satisfies the same relations, but now $\hat{k}'\equiv0\pmod{N}$, which implies $k'\equiv0\pmod{N}$ as well. Let $\gamma'\in\skein_\epsilon(\surface)$ be the basis element without peripheral components corresponding to the coloring $\bar{k}'/N$. Then $\alpha\beta$ have the same degree as $\Phi_\zeta(\gamma')$, so $\lead(\alpha\beta)=\lead(c\Phi_\zeta(\gamma'))$ for some $c\in\perialg$.

By \cite[Lemma~4.5]{LYSL2}, when a diagram $\alpha_0$ is multiplied by $X_{\hat{e}}$, assumed to be disjoint from $\alpha_0$ by an isotopy, the result is a scalar multiple of the diagram $\alpha\cup X_{\hat{e}}$. Thus, if $\alpha_0$ is the diagram for $\lead(\alpha)$, then the diagram of $\lead(\alpha\beta)$ is a simple diagram $\alpha_0\cup\beta$, which is also the diagram for $\lead(\Phi_\zeta(\gamma'))$ by construction. By Theorem~\ref{thm-thread}(2), the arc components in $\Phi_\zeta(\gamma')$ is the same in every term. Thus, $\Phi_\zeta(\gamma')$ contains a factor of $\beta$. Let $c\Phi_\zeta(\gamma')=\gamma\beta$. Then $\gamma\in Z_\zeta$ and $\lead(\gamma)=\lead(\alpha)$.

\textbf{Case 1, Stronger version}. Consider the diagram of $\lead(\alpha)$, which has edge coloring $\bar{k}$. Suppose two edges $a_1,a_2$ meet at an ideal point and form a corner, and $e$ is the boundary edge clockwise to both of them. First assume the corner is part of a triangle as in Figure~\ref{fig-pos-gen}. Since $k\equiv -P_+J\hat{k}\pmod{N}$, from \eqref{eq-PJk}, we get
\begin{gather}
k(a_1)\equiv -\hat{k}(e')-\hat{k}(e_1),\qquad
k(a_2)\equiv -\hat{k}(e')-\hat{k}(e_2),\qquad
k(a)\equiv -\hat{k}(e_1)-\hat{k}(e_2)\pmod{N}.\\
k(a_1)+k(a_2)-k(a)\equiv -2\hat{k}(e')\equiv 2\hat{k}(e)\pmod{N}.
\end{gather}
It is well known that $k(a_1)+k(a_2)-k(a)$ is twice the number of corner arcs between $a_1$ and $a_2$. Cancelling the $2$ from the last equation, we see that the number of corner arcs is at least $(\hat{k}(e)\bmod{N})$. A similar calculation can be done when $a_1=a_2$ bounds a punctured monogon with the same result. These corner arcs connect to form at least $(\hat{k}(e)\bmod{N})$ copies of $D(e)$.

\begin{figure}
\centering
\begin{tikzpicture}[baseline=0cm]
\fill[gray!20] (-60:0.8) -- ++(0:0.8)coordinate(A) -- ++(0:0.8)
	-- ++(60:1.6) -- ++(120:0.8)coordinate(B) -- ++(120:0.8)
	-- ++(180:1.6) -- ++(-120:0.8) coordinate(C) -- ++(-120:0.8) -- cycle;
\draw[wall] (A) ++(-0.8,0) -- +(1.6,0)
	(B) ++(-60:0.8) -- +(120:1.6) (C) ++(60:0.8) -- +(-120:1.6);
\draw (A) -- node[midway,right]{$a_2$} (B)
	-- node[midway,above]{$a$} (C)
	-- node[midway,right]{$a_1$} cycle;
\path[inner sep=2pt] (A) +(-0.6,0)node[below]{$e'$}
	+(0.6,0)node[below]{$e\vphantom{e'}$};
\path (B) +(-60:0.4)node[right]{$e_2$} (C) +(60:0.4)node[left]{$e_1$};
\draw[edge] (A) ++(0.4,0)node[inner sep=2pt,above right]{$D(e)$}
	arc[radius=0.4,start angle=0,end angle=180];
\draw[fill=white] (A)circle(2pt) (B)circle(2pt) (C)circle(2pt);
\end{tikzpicture}
\caption{Corner arc calculation}\label{fig-pos-gen}
\end{figure}
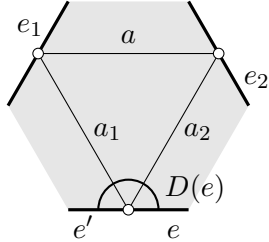

There are at least $(\hat{k}(e)\bmod{N})$ $-$ states on $e$. Since by definition, a leading term diagram is increasingly stated, the innermost $(\hat{k}(e)\bmod{N})$ copies of $D(e)$ must have $-$ states on $e$. On the other hand, the number of endpoints on $e'$ is $k(e)=k_\partial(e)\equiv0\pmod{N}$, and the number of $-$ states is $\hat{k}(e')\equiv-\hat{k}(e)\pmod{N}$. This means there are at least $(\hat{k}(e)\bmod{N})$ $+$ states on $e'$. Again by the increasing stated condition, the same copies of $D(e)$ must be assigned $+$ states on $e'$. Therefore, these copies of $D(e)$ are of the form $X_{\hat{e}}^{\hat{k}(e)\bmod{N}}$. Doing this for every boundary edge $e$, we see the diagram of $\lead(\alpha)$ contains the diagram of
\begin{equation}
\beta_1=\prod_{e\in\quasi_\partial}X_{\hat{e}}^{\hat{k}(e)\bmod{n}}.
\end{equation}
Now let $\alpha'$ be the diagram obtained by removing $\beta_1$ from the diagram of $\lead(\alpha)$, and let $\bar{k}'=\deg\alpha'$. The rest of the proof is essentially the same as the weaker version.

\textbf{Case 2} ($d=2$.) In this case $n=2N$ and $N$ is odd. Thus, we can consider $\bmod N$ and $\bmod2$ separately to obtain
\begin{equation}\label{eqn-center-2}
\begin{split}
k+P_+J\hat{k}&\equiv0\pmod{N},\\
P_+Jk_\partial&\equiv0\pmod{2},\\
k_\partial&\equiv0\pmod{N}.
\end{split}
\end{equation}
These relations have similar form as the previous case. Thus, the same argument produces the elements $\gamma'\in\skein_\epsilon(\surface)$ with degree $\bar{k}'/N$, $\beta$ a product of elements in $B_\zeta$ and $\gamma\in\skein_\zeta(\surface)$, $c\in\perialg$ with $c\Phi_\zeta(\gamma')=\gamma\beta$ and $\lead(\gamma)=\lead(\alpha)$.

By construction, $\bar{k}'$ satisfy the same relations \eqref{eqn-center-2}. 
If two boundary edges $b_1,b_2$ are connected by an edge $e\in\quasi$ as in Figure~\ref{fig-matching}, then by \eqref{eq-PJk},
\begin{equation}
k'(b_1)+k'(b_2)=(P_+Jk'_\partial)(e)\equiv0\pmod2.
\end{equation}
Then using connectedness, $k'(b_1)+k'(b_2)$ is even for any pair of boundary edges $b_1,b_2$. Since $N$ is odd, the same is true when divided by $N$. This means $\gamma'$ is matching, so $\gamma\in Z_\zeta$.

\textbf{Case 3} ($d=4$.) Since $n=4N$, the relations can be rewritten as
\begin{equation}\label{eqn-center-4}
\begin{split}
P_+Jk_\delta-k&\equiv0\pmod{2N},\\
k_\partial&\equiv0\pmod{2N},
\end{split}
\end{equation}
where $k_\delta=\frac{1}{2}k_\partial-\hat{k}$ is integral by the second relation. It is also half the boundary gradings. Following the same strategy, we multiply the first relation by $J^T$ to get
\begin{equation}
(J^T P_+J)k_\delta\equiv0\pmod{2N}.
\end{equation}
Since the modulus is even, there are more solutions than the previous cases.

Let $C_1,\dotsc,C_b$ be the boundary components of $\surface$. Suppose $C_j$ has $r_j$ ideal points. Order the edges of $C_j$ consecutively as $e_1,\dotsc,e_{r_j}$. As in the previous cases, $k_\delta(e_i)+k_\delta(e_{i+1})\equiv0\pmod{2N}$, and $k_\delta(e_i)\equiv(-1)^{r_j}k_\delta(e_i)\pmod{2N}$. If $r_j$ is odd, then $2k_\delta(e_i)\equiv0\pmod{2N}$. Thus, either $k_\delta(e_i)\equiv0\pmod{2N}$ for all $i$, or $k_\delta(e)\equiv N\pmod{2N}$ for all $i$. Let $\beta_j=1$ if the first condition is true, or let $\beta_j=X_{e_1}^N\dotsm X_{e_{r_j}}^N\in B_\zeta$ if the second is true. If $r_j$ is even, then the same construction in the previous cases with $\hat{k}$ replaced by $-k_\delta$ defines an element $\beta_j\in B_\zeta$. Finally, let $\beta_o$ be the product of all $\beta_j$ with $r_j$ odd, $\beta_e$ be the product of all $\beta_j$ with $r_j$ even, and $\beta=\beta_o\beta_e$.

Let $\bar{k}''=\deg(\alpha\beta_e)$, which is divisible by $N$ just like the previous cases. Let $\gamma''\in\skein_\epsilon(\surface)$ be the basis element without peripheral curves corresponding to the coloring $\bar{k}''/N$. Then $\lead(\alpha\beta_e)=\lead(c\Phi_\zeta(\gamma''))$ for some $c\in\perialg$. Again by construction, $\Phi_\zeta(\gamma'')$ contains a factor of $\beta_e$, so we can write $c\Phi_\zeta(\gamma'')=\gamma\beta_e$ for some $\gamma\in\skein_\zeta(\surface)$. It follows then $\lead(\gamma)=\lead(\alpha)$.

Note that $\beta_o$ has the form $\Phi_\zeta(\beta'_o)$ where $\beta'_o\in\skein_\epsilon(\surface)$ is the corresponding product of elements in $B_\epsilon$. Let $\gamma'=\gamma''\beta'_o$. Then
\begin{equation}
c\Phi_\zeta(\gamma')=c\Phi_\zeta(\gamma'')\Phi_\zeta(\beta'_o)
=\gamma\beta_e\beta_o=\gamma\beta.
\end{equation}
To show $\gamma\in Z_\zeta$, we just need to show that $\gamma'$ is even. Unlike the previous cases, $\gamma'$ is not necessary a single diagram because of the factor $\beta_o$. However, $\gamma'$ is still a product of basis elements, so it is homogeneous in both gradings. Therefore, it suffices to check that the leading term is even. By construction,
\begin{equation}
\bar{k}':=\deg(\gamma')=\frac{1}{N}(\deg\alpha+\deg\beta_o+\deg\beta_e)
\end{equation}
satisfies
\begin{equation}
k'_\delta\equiv0\pmod{2},\qquad\text{hence }
k'-P_+Jk'_\delta\equiv k'\equiv0\pmod{2}.
\end{equation}
The first implies that the boundary grading is divisible by 4. On the other hand, $k'\equiv0\pmod{2}$ translates to the even intersection condition. Thus, $\gamma'$ is even.
\end{proof}

\section{Dimension over the center}
\label{sec-dim}

\subsection{Statements of the results}

Assume $\surface$ is connected, triangulable, and have nonempty boundary. Define the following parameters.
\begin{enumerate}
\item $g$ is the genus of the surface.
\item $p=\abs{\peripheral}$ is the number of interior ideal points.
\item $v=\abs{\marked_\partial}$ is the number of boundary ideal points or boundary edges.
\item $b$ is the number of boundary components of $\surclose$.
\item $b_2$ is the number of boundary components with an even number of ideal points.
\end{enumerate}

\begin{lemma}
Let
\begin{equation}
r(\surface)=v-\chi(\surclose)=v+2g-2+b.
\end{equation}
Then
\begin{equation}
\abs{\bar{\quasi}}=3r(\surface)+2p,\qquad
\abs{\quasi}=\abs{\bar{\quasi}}-v.
\end{equation}
\end{lemma}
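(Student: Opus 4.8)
The plan is to reduce everything to one count, $\abs{\quasi}$, via Euler-characteristic and incidence bookkeeping, and to read off the rest from the definitions. Start with the second identity: each boundary component of $\surface$ carries exactly one boundary edge, and the boundary components of $\surface$ are the components of $\partial\surclose\setminus\marked_\partial$, so removing the $r_i$ points of $\marked_\partial$ from the $i$-th boundary circle splits it into $r_i$ arcs and $\abs{\quasi_\partial}=\sum_i r_i=v$. Since $\bar\quasi=\quasi\sqcup\hat\quasi_\partial$ with $\hat\quasi_\partial$ a copy of $\quasi_\partial$, this gives $\abs{\bar\quasi}=\abs{\quasi}+v$ (the second identity) and reduces the first to $\abs{\quasi}=3r(\surface)+2p-v$. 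Since $r(\surface)=v-\chi(\surclose)$ is just $\chi(\surclose)=2-2g-b$, it is enough to prove $\abs{\quasi}=2v+2p-3\chi(\surclose)$.

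Let $t$ be the number of triangular faces; the remaining faces are punctured monogons, one for each puncture, so $\abs{\face(\quasi)}=t+p$. I would extract two linear relations among $\abs{\quasi}$, $t$, and $\chi(\surclose)$. The first is a double count of incidences between edges and faces: a triangle contributes $3$ sides and a punctured monogon $1$, while each of the $\abs{\quasi}-v$ interior edges is a side of two faces (possibly equal) and each of the $v$ boundary edges is a side of exactly one face, so $3t+p=2(\abs{\quasi}-v)+v$. The second is Euler's formula. Because a punctured monogon is not a disk, I would first enlarge $\quasi$ to an auxiliary CW decomposition $\Delta$ of $\surclose$ by drawing inside each punctured monogon a single arc from the puncture to the boundary marked point; this turns each punctured monogon into one self-folded triangle and makes $\Delta$ a CW complex with $\abs{\marked}=v+p$ vertices, $\abs{\quasi}+p$ edges, and $t+p$ faces, so $\chi(\surclose)=(v+p)-(\abs{\quasi}+p)+(t+p)$. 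Eliminating $t$ between the two relations yields $\abs{\quasi}=2v+2p-3\chi(\surclose)$, as needed. (Alternatively, one can skip the completion and compute the compactly supported Euler characteristic of $\surface$ from its stratification into open faces and open edges, getting $\chi_c(\surface)=t-\abs{\quasi}$, then compare with $\chi_c(\surface)=\chi(\surclose)-v-p$.)

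The one genuine subtlety is that the punctured monogons must not be counted as faces of a naive CW structure on $\surclose$ — doing so is exactly what makes ``$V-E+F$'' come out wrong — so the self-folded completion (or the $\chi_c$ argument) is the step to get right; one must also keep in mind that $\surface$ is assumed triangulable, i.e.\ neither a monogon nor a bigon, which is what justifies the triangle/punctured-monogon description of $\face(\quasi)$. Everything else is routine bookkeeping, and a few small cases pin down the normalization: a triangle gives $\abs{\quasi}=3$, a once-punctured monogon gives $\abs{\quasi}=1$, and an annulus with one marked point on each boundary circle gives $\abs{\quasi}=4$.
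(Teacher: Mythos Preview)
Your argument is correct and follows the same two-equation strategy as the paper: an edge--face incidence count together with an Euler-characteristic identity for $\surclose$, then eliminate the number of faces. The one place you diverge is in how you justify the Euler identity. You worry that a punctured monogon is not a disk and therefore pass to an auxiliary completion $\Delta$ with self-folded triangles (or alternatively invoke $\chi_c$). The paper sidesteps this entirely: since $\surclose$ is the \emph{compact} surface with all ideal points filled back in, each punctured monogon of $\surface$ becomes an honest monogon disk in $\surclose$, and the triangulation already gives a CW structure on $\surclose$ with $v$ vertices, $\abs{\quasi}$ edges, and $f$ two-cells ($p$ monogons and $f-p$ triangles). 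So the paper writes directly
\[
v-\abs{\quasi}+f=\chi(\surclose),\qquad 3(f-p)+p=2\abs{\quasi}-v,
\]
and solves. Your completion to self-folded triangles yields the same linear system after cancelling the extra $p$'s, so nothing is lost; it is just one step more than needed. Your derivation of $\abs{\quasi_\partial}=v$ and hence $\abs{\bar\quasi}=\abs{\quasi}+v$ matches the paper's implicit use of this fact.
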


\begin{proof}
This is a standard Euler characteristic calculation. A triangulation is the 1-skeleton of a CW-complex structure of $\surclose$. It has $v$ vertices and $\abs{\quasi}$ edges. Let $f$ be the number of 2-cells, $p$ of which are monogons with the rest being triangles. Then
\begin{equation}
v-\abs{\quasi}+f=\chi(\surclose),\qquad
3(f-p)+p=2\abs{\quasi}-v.
\end{equation}
Solving the equations proves the lemma.
\end{proof}

By \cite[Proposition~4.4]{LeTriang}, $\skein_q(\surface)$ is a domain. Therefore, its center $Z$ has a field of fractions, denoted by $\tilde{Z}$. Then $\skein_q(\surface)\otimes_Z\tilde{Z}$ is a vector space over $\tilde{Z}$, whose dimension is called the \term{dimension} of $\skein_q(\surface)$ over its center and denoted $\dim_Z\skein_q(\surface)$.

\begin{lemma}
At a root of unity $q=\zeta$, $\skein_\zeta(\surface)$ is finitely generated as a module over its center $Z$. Consequently, $\dim_Z\skein_\zeta(\surface)$ is finite.
\end{lemma}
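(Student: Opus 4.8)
The plan is to produce a finitely generated central subalgebra $Z_0\subseteq Z:=Z(\skein_\zeta(\surface))$ over which $\skein_\zeta(\surface)$ is a finite module; then finiteness over $Z$ follows, and $\skein_\zeta(\surface)\otimes_Z\tilde Z$ is a finitely generated module over the field $\tilde Z$, hence finite-dimensional. To set this up I would first record the multiplicative behaviour of leading terms. By Proposition~\ref{prob-bal} the monoid $\Lambda$ of admissible colorings is the intersection of the lattice $\balLambda$ with a rational polyhedral cone, hence finitely generated by Gordan's lemma; fix monoid generators $g_1,\dots,g_m$. Write $\bar{e}_{\bar k}\in B$ for the basis element of degree $\bar k\in\Lambda$. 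By Corollary~\ref{cor-deg-lt}, $\lead(\bar{e}_{\bar k}\bar{e}_{\bar l})=\sigma(\bar k,\bar l)\,\bar{e}_{\bar k+\bar l}$ with $\sigma(\bar k,\bar l)\in\perialg\setminus\{0\}$, and in fact $\sigma(\bar k,\bar l)\in R^\times$: a peripheral curve lies inside a face and hence has $\dego=0$, so creating a peripheral component in a product strictly lowers the degree and cannot occur in a leading term, leaving the leading coefficient a product of powers of $q^{1/2}$. (If one prefers to avoid this point, pass to the fraction field of $\perialg$, where every nonzero structure constant becomes a unit, and descend at the end using that $\skein_\zeta(\surface)$ is Noetherian and its center is affine by the Artin--Tate lemma.)

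Next I would exhibit the central generators. For each $i$ let $\mu_i\in\skein_\epsilon(\surface)$ be the basis element of degree $g_i$ and put $\gamma_i:=\Phi_\zeta(\mu_i^{\,d})$. The element $\mu_i^{\,d}$ lies in $X_\zeta$ in all three cases: trivially for $d=1$; for $d=2$ it is matching, since each $\delta_e(\mu_i^{\,2})=2\delta_e(\mu_i)$ is even; for $d=4$ it is even, since $4[\mu_i]=0$ in $\ints/2$ homology and each $\delta_e(\mu_i^{\,4})=4\delta_e(\mu_i)$ is divisible by $4$. Hence $\gamma_i\in Z$ by Corollary~\ref{cor-Xzeta}. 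By Theorem~\ref{thm-thread}(2) together with Corollary~\ref{cor-deg-lt}, $\deg\gamma_i=Nd\,g_i=n\,g_i$, and the term of $\Phi_\zeta(\mu_i^{\,d})$ in which every component is replaced by $N$ parallel copies is — after fixing heights via Remark~\ref{rem-arc-power} — a basis element of degree $n g_i$ occurring with coefficient a power of $q^{1/2}$, and it has strictly higher degree than all other terms (fewer parallel copies of a nontrivial component means strictly fewer edge intersections); thus $\lead\gamma_i$ is a unit multiple of $\bar{e}_{n g_i}$. Set $Z_0:=\perialg[\gamma_1,\dots,\gamma_m]\subseteq Z$.

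Finally I would eliminate leading terms. Since the structure constants $\sigma$ are units and $\lead\gamma_i$ is a unit times $\bar{e}_{n g_i}$, the leading term of any $\prod_i\gamma_i^{b_i}$ is a unit times $\bar{e}_{n\sum b_i g_i}$, and as the $g_i$ generate $\Lambda$ these degrees range exactly over $n\Lambda$. Because $\Lambda$ is generated by $g_1,\dots,g_m$, we have $\Lambda=n\Lambda+F$ with $F:=\{\sum_i r_i g_i:0\le r_i<n\}$ finite. I claim $\skein_\zeta(\surface)=\sum_{\bar f\in F}Z_0\,\bar{e}_{\bar f}$. Given a nonzero $x$ with $\deg x=\bar k$, write $\bar k=\bar f+n\bar p$ with $\bar f\in F$ and $\bar p=\sum_i b_i g_i\in\Lambda$; then $\gamma:=\prod_i\gamma_i^{b_i}\in Z_0$ has $\deg(\gamma\,\bar{e}_{\bar f})=\bar k$ with leading term a unit multiple of $\bar{e}_{\bar k}$, so after scaling $\gamma$ by a suitable element of $\perialg$ (still in $Z_0$) the product $\gamma\,\bar{e}_{\bar f}$ has the same leading term as $x$; subtracting reduces $x$ to strictly smaller degree, and the process terminates because the colorings carry a well-ordering, with base case $F_{\bar 0}=\perialg=\perialg\cdot\bar{e}_{\bar 0}\subseteq Z_0$. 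Hence $\skein_\zeta(\surface)$ is a finite $Z_0$-module, so a finite $Z$-module, which gives the lemma.

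The step I expect to be the main obstacle is the claim that the structure constants $\sigma(\bar k,\bar l)$ are units of $R$ — equivalently, that leading terms of products never acquire peripheral-curve coefficients in $\perialg$. This is geometrically natural (peripheral curves are invisible to the edge coloring, living inside faces), but it relies on the precise statement that the leading term of a product of basis elements is computed by a crossing resolution maximizing $\dego$; if that is deemed too delicate to invoke directly, the fraction-field-and-descend variant indicated above reaches the same conclusion using only the Noetherianity of $\skein_\zeta(\surface)$ and the affineness of its center.
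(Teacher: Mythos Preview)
Your overall strategy — construct central elements via the Frobenius homomorphism and eliminate leading terms by well-ordered induction — is exactly the paper's. The paper, however, chooses its generators differently, and that choice is what makes the argument go through cleanly. It invokes \cite[Theorem~6.7]{LYSL2} to obtain \emph{one-component} simple diagrams $\alpha_1,\dots,\alpha_s$ whose monomials span $\skein_\zeta(\surface)$, and then reduces any exponent $\ge n$ via
\[
\alpha_j^n=c_j\,\Phi_\zeta(\alpha_j^d)+(\text{lower degree terms}),\qquad c_j\in\perialg.
\]
For a single component this identity is immediate with $c_j\in R^\times$: if $\alpha_j$ is an arc then $\Phi_\zeta(\alpha_j)$ equals $\alpha_j^N$ up to a power of $\zeta$ (the remark after Theorem~\ref{thm-thread}), so $\Phi_\zeta(\alpha_j^d)=\zeta^{?}\alpha_j^n$ on the nose; if $\alpha_j$ is a closed curve then $\Phi_\zeta(\alpha_j)$ is a monic polynomial in $\alpha_j$, hence $\Phi_\zeta(\alpha_j^d)=\alpha_j^n+(\text{lower powers of }\alpha_j)$. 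Either way no division by a nontrivial element of $\perialg$ is ever required, and the reduction terminates with a generating set of size $n^s$.

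Your Gordan's-lemma route replaces these one-component generators with possibly multi-component basis elements $\mu_i=\bar e_{g_i}$, and this forces the issue you flag at the end. Both the claim that $\lead\gamma_i$ is a unit multiple of $\bar e_{ng_i}$ (you need to multiply $\Phi_\zeta(\mu_i)$ by itself $d$ times) and the elimination step (you need $\lead(\prod_i\gamma_i^{b_i}\cdot\bar e_{\bar f})$ to be a unit multiple of $\bar e_{\bar k}$) genuinely require $\sigma(\bar k,\bar l)\in R^\times$. Your geometric heuristic does not establish this: resolving a crossing leaves raw intersection numbers with edges unchanged, so the drop in degree comes only from the resulting diagram failing to be in minimal position, and nothing you said rules out a top-degree resolution that contains a peripheral component (whose removal contributes a nontrivial $\perialg$-factor while leaving the remaining diagram at full edge-coloring). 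The fallback is also incomplete for the lemma as stated: working over the fraction field of $\perialg$ gives finiteness of $\dim_{\tilde Z}(\skein_\zeta\otimes_Z\tilde Z)$, but not finite generation of $\skein_\zeta$ as a $Z$-module, and Artin--Tate runs in the wrong direction here (it deduces affineness of the center \emph{from} module-finiteness, not the converse). The simplest repair is to trade your monoid generators for the paper's one-component algebra generators, which makes the divisibility issue disappear.
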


\begin{proof}
By \cite[Theorem~6.7]{LYSL2}, there exists one-component simple diagrams $\alpha_1,\dotsc,\alpha_s\in\skein_\zeta(\surface)$ such that elements of the form $\alpha_1^{i_1}\dotsm\alpha_s^{i_s}$ span $\skein_\zeta(\surface)$. Since $\Phi_\zeta(\alpha_j^d)$ is central and has degree $N\deg\alpha_j^d=\deg\alpha_j^n$, we can write
\begin{equation}
\alpha_j^n=c_j\Phi_\zeta(\alpha_j^d)+(\text{lower degree terms})
\end{equation}
where $c_j\in\perialg$, so first term is in $Z$. Therefore, an element $\alpha_1^{i_1}\dotsm\alpha_s^{i_s}$ with any exponent at least $n$ can be written as an element of $Z$ plus lower degree terms. Since the degrees are well-ordered, any element is a finite sum of elements of $Z$ and $\perialg$-multiples of $\alpha_1^{i_1}\dotsm\alpha_s^{i_s}$ with all exponents less than $n$. Therefore, as a $Z$-module, $\skein_\zeta(\surface)$ is generated by at most $n^s$ elements.
\end{proof}

\begin{theorem}\label{thm-dim-z}
At a root of unity $q=\zeta$, the dimension of $\skein_\zeta(\surface)$ over its center is
\begin{equation}
D_\zeta=\begin{cases}
N^{\abs{\bar{\quasi}}-b_2},&d=1,\\
2^{2\lfloor\frac{v-1}{2}\rfloor}N^{\abs{\bar{\quasi}}-b_2},&d=2,\\
2^{2g+2v-2}N^{\abs{\bar{\quasi}}-b_2},&d=4.
\end{cases}
\end{equation}
\end{theorem}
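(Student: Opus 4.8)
The plan is to read the dimension over the center off the leading‑term filtration: after passing to the associated graded algebra the problem becomes a computation inside a quantum torus, and the answer is a lattice index governed by the skew form $\langle\cdot,\cdot\rangle_{\bar Q}$. First I would pass to $\operatorname{gr}\skein_\zeta(\surface)$ for the fine filtration $\{F_{\bar k}\}$. By the lemmas above $\skein_\zeta(\surface)$ is a domain, module‑finite over its center, and $\operatorname{gr}\skein_\zeta(\surface)$ is a domain: as an $\perialg$‑module it is freely generated by the classes $[\alpha]$ of the basis elements $\alpha\in B$, and by Corollary~\ref{cor-deg-lt} these satisfy $[\alpha][\beta]=q^{\langle\deg\alpha,\deg\beta\rangle_{\bar Q}}[\beta][\alpha]$ with scalar leading coefficients. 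Since the PI degree, hence $\dim_Z$, is unchanged on passing to such an associated graded ring, and since each $[\alpha]$ is a normal regular element while $\Lambda$ generates $\balLambda$, localizing at the $[\alpha]$ produces the quantum torus $\mathcal{T}=\bigoplus_{\bar k\in\balLambda}\perialg\,z^{\bar k}$ over the central subring $\perialg$, with integer commutation form $\langle\cdot,\cdot\rangle_{\bar Q}$, and $\dim_Z\skein_\zeta(\surface)=\dim_{Z(\mathcal{T})}\mathcal{T}$. (In particular Theorem~\ref{thm-center-root-1} is not needed for the dimension.)

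Next, for a quantum torus over a field with lattice $\balLambda$ and integer form $\langle\cdot,\cdot\rangle_{\bar Q}$ at $q=\zeta$ of order $n$, the center is spanned by the $z^{\bar k}$ with $\bar k$ in the mod-$n$ radical $\balLambda_0=\{\bar k\in\balLambda:\langle\bar k,\bar l\rangle_{\bar Q}\equiv0\bmod n\ \text{for all }\bar l\in\balLambda\}$, so $\dim_{Z(\mathcal{T})}\mathcal{T}=[\balLambda:\balLambda_0]$. Expressing the form in the $\ints$-basis $\{\bar k_a\}_{a\in\bar\quasi}$ of $\balLambda$ from Lemma~\ref{lemma-bal-basis} (the rows of $\bar K$) turns it into the skew-symmetric integer matrix $M:=\bar K\bar Q\bar K^T$, and
\[
[\balLambda:\balLambda_0]=\bigl[\,\ints^{\bar\quasi}:\{x:Mx\equiv0\bmod n\}\,\bigr]=\prod_i\frac{n}{\gcd(n,d_i)},
\]
where $d_1\mid d_2\mid\cdots$ are the invariant factors of $M$.

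The main obstacle is to compute the Smith normal form of $M=\bar K\bar Q\bar K^T$. Using the block forms of $\bar K$ and $\bar Q$ together with $P_+(JJ^T-Q)=2I$ (Lemma~\ref{lemma-H-inverse}), a few integral row and column operations reduce $M$ to a block matrix built from $Q$, from $P_+JJ^TP_+^T$, and from the boundary incidence matrix $J^TP_+J$, which one checks equals $I+S$ with $S$ the successor permutation on the cyclically ordered boundary edges of each boundary component. One then has to show that the invariant factors of $M$ are exactly: $b_2$ zeros (the rational radical, attached to boundary components with an even number of ideal points --- note $\abs{\bar\quasi}-b_2$ is always even); $2\lfloor(v-1)/2\rfloor$ ones (boundary-edge/grading directions, where the form is already unimodular, the floor coming from a parity argument when the boundary components close up); $2g+2v-2-4\lfloor(v-1)/2\rfloor$ twos (equivalently $2g$ if $v$ is odd and $2g+2$ if $v$ is even --- a symplectic basis for the interior homology interacting with the boundary parities); and the remaining $\abs{\bar\quasi}-b_2-2\lfloor(v-1)/2\rfloor-(2g+2v-2-4\lfloor(v-1)/2\rfloor)$ fours (generic directions, where the form is $4$ times a unimodular form, the $4$ reflecting that each skein crossing is worth $\zeta^2$ and each boundary height exchange $\zeta^{1/2}$). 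I expect this to be the bulk of the proof: it calls for splitting $\bar\quasi$ along the boundary components, treating the interior via a symplectic basis of $H_1$, and a careful parity bookkeeping to produce the floor functions; the punctured monogon ($M$ with invariant factors $4,4$) and the triangle ($M$ with invariant factors $1,1,4,4,4,4$) are illustrative small cases.

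Finally I would substitute $n=dN$ and use $\gcd(dN,1)=1$, $\gcd(dN,2)=\min(d,2)$, $\gcd(dN,4)=d$, so that invariant factors $1,2,4,0$ contribute respectively $dN,\ dN/\min(d,2),\ N,\ 1$ to $\prod_i n/\gcd(n,d_i)$; collecting the resulting powers of $2$ and of $N$ according to the counts above gives $N^{\abs{\bar\quasi}-b_2}$ when $d=1$, $2^{2\lfloor(v-1)/2\rfloor}N^{\abs{\bar\quasi}-b_2}$ when $d=2$, and $2^{2g+2v-2}N^{\abs{\bar\quasi}-b_2}$ when $d=4$, which is exactly $D_\zeta$.
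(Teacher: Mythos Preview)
Your reduction to the lattice index $[\balLambda:\balLambda_0]$ is correct and matches the paper at the top level: the paper's $\Lambda_\zeta$ (balanced solutions of \eqref{eqn-center}) is exactly your mod-$n$ radical $\balLambda_0$, and the identity $\dim_Z\skein_\zeta(\surface)=\lvert\balLambda/\Lambda_\zeta\rvert$ is what the paper extracts from \cite{FKLDimension} together with the surjectivity of $\deg_\zeta$ (Lemma~\ref{lemma-deg-sur}). Your associated-graded route to this same identity is fine in outline, though ``localizing at the $[\alpha]$ produces the quantum torus'' glosses over the fact that the structure constants in $\operatorname{gr}\skein_\zeta(\surface)$ are non-unit elements of $\perialg$ (the leading coefficients $c(\alpha)$ of Theorem~\ref{thm-qtr} live in $\periext$); this is harmless once one passes to the field of fractions of the graded center, but it should be said.

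The genuine divergence, and the genuine gap, is in computing the index. The paper never touches the Smith normal form of $M=\bar K\bar Q\bar K^T$; it handles $d=1,2,4$ by short separate arguments (Lemmas~\ref{lemma-balanced-index}, \ref{lemma-bdry-summand}, and the two lemmas following them): for $d=1$ one shows $\Lambda_\zeta=\Lambda_B+N\balLambda$ with $\Lambda_B$ a rank-$b_2$ direct summand; for $d=2$ one factors through the matching sublattice and computes $\lvert\balLambda/\Lambda^\rmmatch\rvert=2^{2\lfloor(v-1)/2\rfloor}$ by an elementary parity count on boundary gradings; for $d=4$ one counts solutions of \eqref{eqn-center-4} modulo $n$ directly and divides by $\lvert\ints^{\bar\quasi}/\balLambda\rvert=2^{\lvert\bar\quasi\rvert-r(\surface)}$. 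Each of these is a few lines. Your proposed invariant-factor list is consistent with these answers and your final substitution arithmetic is correct, but you have not established the list: ``splitting $\bar\quasi$ along the boundary components, a symplectic basis of $H_1$, and careful parity bookkeeping'' is a plan, not an argument, and the two small examples do not determine the general pattern. You have traded three short case computations for one harder uniform linear-algebra computation that you then leave undone; as written, the proposal stops exactly where the content begins.
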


Although it is not immediately obvious, $\abs{\bar{\quasi}}-b_2$ is even. Thus, $D_\zeta$ is always a square.

\subsection{Proof of Theorem~\ref{thm-dim-z}}

Recall $\balLambda\subset\ints^{\bar\quasi}$ is the subgroup of balanced vectors. Let $\Lambda_\zeta\subset\balLambda$ be the subgroup of balanced solutions to \eqref{eqn-center}. Define the \term{residue group} at $q=\zeta$ to be
\begin{equation}
R_\zeta=\balLambda/\Lambda_\zeta.
\end{equation}

\begin{lemma}\label{lemma-res-size}
$\abs{R_\zeta}=D_\zeta$.
\end{lemma}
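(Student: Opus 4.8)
The plan is to identify $\dim_Z\skein_\zeta(\surface)$ with the index $[\balLambda:\Lambda_\zeta]$, and then to compute that index by pure linear algebra over $\ints$ and $\ints/n$. For the first step I would work with the filtration $\{F_{\bar k}\}_{\bar k\in\Lambda}$ and the associated graded picture coming from the quantum trace map (Theorem~\ref{thm-qtr}): the leading term map $\lead$ is multiplicative up to a scalar by Corollary~\ref{cor-deg-lt}, so degrees of elements of $\skein_\zeta(\surface)$ form the monoid $\Lambda$, degrees of central elements form exactly $\Lambda_\zeta$ (this is where \eqref{eqn-center} comes from, as in the proofs of Theorem~\ref{thm-center-generic} and Lemma~\ref{lemma-lt-cancel}), and the degrees of elements of the field of fractions $\tilde Z$ tensored up form $\balLambda$. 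A standard Ore-localization / leading-term argument then shows $\skein_\zeta(\surface)\otimes_Z\tilde Z$ has a $\tilde Z$-basis indexed by coset representatives of $\balLambda/\Lambda_\zeta$: given any element, multiply by a suitable central element to land in a fixed coset and cancel leading terms, exactly as in the proof of Theorem~\ref{thm-center-root-1}. Hence $\dim_Z\skein_\zeta(\surface)=[\balLambda:\Lambda_\zeta]=\abs{R_\zeta}$, and it remains to check $\abs{R_\zeta}=D_\zeta$ with $D_\zeta$ as in Theorem~\ref{thm-dim-z}.

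For the index computation I would use the parametrization of $\balLambda$ from Lemma~\ref{lemma-bal-basis}: $\balLambda\cong\ints^{\quasi}\times\ints^{\hat\quasi_\partial}$ via $\bar k=(kP_+,2\hat k)$, so $\abs{\bar\quasi}=\abs{\quasi}+v$ is the rank. Substituting this parametrization into \eqref{eqn-center} and using Lemma~\ref{lemma-H-inverse} ($P_+(JJ^T-Q)=2I$) to simplify — the same manipulation done in Theorem~\ref{thm-center-generic} — reduces the defining congruences of $\Lambda_\zeta$ to the system in the $(k,\hat k)$ coordinates, which splits into a "bulk" part forcing $k\equiv 0$ and $\hat k\equiv 0$ modulo $N$ (contributing a factor $N^{\abs{\bar\quasi}}$ after accounting for the $\hat k$–to–boundary-edge bookkeeping) and a "boundary correction" coming from the extra solutions modulo $d$ analyzed case-by-case in Lemma~\ref{lemma-lt-cancel}. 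The boundary correction is organized by boundary components: a component $C_j$ with $r_j$ ideal points contributes the group of $\bmod\, d$ solutions of $\hat k(e_i)+\hat k(e_{i+1})\equiv 0$ around the cycle, whose size is $1$ when $r_j$ is odd and $d$ when $r_j$ is even for $d\in\{1,2\}$, and for $d=4$ is $2$ when $r_j$ odd (the $k_\delta\equiv N$ solution) and larger when $r_j$ even. Summing the exponents over all boundary components and subtracting the $b_2$ even components (which is where the $-b_2$ in the exponent of $N$ appears, since those components' $\bmod N$ freedom is eaten by the $B_\zeta$-elements) gives precisely $N^{\abs{\bar\quasi}-b_2}$ times the $d$-dependent power of $2$: none for $d=1$, $2^{2\lfloor(v-1)/2\rfloor}$ for $d=2$, and $2^{2g+2v-2}$ for $d=4$.

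I expect the main obstacle to be the $d=4$ boundary bookkeeping. For $d=1$ everything is forced to be divisible by $N$ and the count is immediate; for $d=2$ the only extra freedom is a single global $\bmod 2$ parity that the matching condition controls, and the exponent $2\lfloor(v-1)/2\rfloor$ falls out of counting $\bmod 2$ solutions of the cycle system over all boundary components with a single global constraint (the total endpoint parity vanishes). But for $d=4$ the module $\Lambda_\zeta$ sits in $\balLambda$ with index involving both the $\bmod 2$ homology constraints encoded in $P_+Jk_\partial$ and the $\bmod N$ constraints, and these interact through the matrix $J^TP_+J$; I would need to carefully verify that the relevant $\ints/4$ (equivalently $\ints/2$ after pulling out the odd part $N$) quotient of $\balLambda$ by the even-subalgebra degree lattice has order $2^{2g+2v-2}$, presumably by choosing an explicit basis of $\balLambda$ adapted to $H_1(\surclose,\partial\surface;\ints/2)$ — whose rank is $2g+v-1+\dots$ — together with the $v$ boundary $\bmod 4$ gradings, and checking the even subalgebra is cut out by the expected number of independent congruences. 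Getting the rank accounting exactly right (so that the $b_2$ correction and the $g,v$ dependence match) is the delicate point; the rest is routine Smith-normal-form–style computation.
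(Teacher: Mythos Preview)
Your first paragraph is aimed at the wrong target. Lemma~\ref{lemma-res-size} is purely the lattice identity $\abs{\balLambda/\Lambda_\zeta}=D_\zeta$, where $D_\zeta$ is the explicit formula in Theorem~\ref{thm-dim-z}; it says nothing about $\dim_Z\skein_\zeta(\surface)$. The identification of the dimension with $\abs{R_\zeta}$ is done separately in the paper (Lemma~\ref{lemma-deg-sur} plus the cited results from \cite{FKLDimension}), so your leading-term/Ore argument is redundant here.

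For the actual index computation your plan is in the right spirit but the bookkeeping is off in the $d=2$ case. You attribute the factor $2^{2\lfloor(v-1)/2\rfloor}$ to ``$\bmod\,2$ solutions of the cycle system'' in $\hat k$, but that system (each even boundary component contributing a factor of $2$) produces $2^{b_2}$, not $2^{2\lfloor(v-1)/2\rfloor}$; these numbers are genuinely different (take $v=3$ on a single boundary component). In the paper the $2$-power for $d=2$ comes from an entirely different place: it is the index $[\balLambda:\Lambda^\rmmatch]$, computed via the $\bmod\,2$ boundary-grading map $\Delta_2:\balLambda\to(\ints/2)^{\quasi_\partial}$, $\bar k\mapsto (k(e)\bmod 2)_{e\in\quasi_\partial}$, whose image is the sum-zero hyperplane and whose kernel-mod-$\mathbf{1}$ is $\Lambda^\rmmatch$. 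The cycle system in $\hat k$ is instead responsible for the $N^{-b_2}$ correction, via the subgroup $\Lambda_B$ generated by the degrees of the $B_\zeta$-elements.

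The paper's organization is more structural than a direct coordinate count: for $d=1$ it proves $\Lambda_\zeta=\Lambda_B+N\balLambda$ with $\Lambda_B$ a rank-$b_2$ direct summand of $\balLambda$, so $\abs{R_\zeta}=N^{\abs{\bar\quasi}-b_2}$ immediately; for $d=2$ it proves $\Lambda_\zeta=\Lambda_B+N\Lambda^\rmmatch$ and multiplies by $[\balLambda:\Lambda^\rmmatch]$; for $d=4$ it uses the tower $(n\ints)^{\bar\quasi}\subset\Lambda_\zeta\subset\balLambda\subset\ints^{\bar\quasi}$, counting $\abs{\Lambda_\zeta/(n\ints)^{\bar\quasi}}$ directly from \eqref{eqn-center-4} and computing $\abs{\ints^{\bar\quasi}/\balLambda}=2^{\abs{\bar\quasi}-r(\surface)}$ by identifying $\balLambda/(2\ints)^{\bar\quasi}$ with the cocycle group $Z^1(\surclose;\ints/2)$. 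This last cohomological step is exactly the ``explicit basis of $\balLambda$ adapted to $H_1$'' that you anticipate needing, and it is what makes the $2g+2v-2$ exponent fall out cleanly rather than requiring a delicate Smith-normal-form computation.
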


The proof of Lemma~\ref{lemma-res-size} is in the next section.

The degree can be considered as a map $\deg:\skein_\zeta(\surface)\setminus\{0\}\to\balLambda$. Clearly $\deg(Z)\subset \Lambda_\zeta$. By composing with the quotient map, we obtain $\deg_\zeta:\skein_\zeta(\surface)\setminus\{0\}\to R_\zeta$.

\begin{lemma}\label{lemma-deg-sur}
$\deg_\zeta$ is surjective.
\end{lemma}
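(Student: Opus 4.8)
The plan is to derive surjectivity from the additivity of $\deg$ combined with the finiteness of $R_\zeta$, reducing everything to the statement that the classes of a generating set of $\balLambda$ generate $R_\zeta$. First I would record that $M:=\deg(\skein_\zeta(\surface)\setminus\{0\})$ is a submonoid of $\balLambda$. Indeed, $\deg(\alpha\beta)=\deg\alpha+\deg\beta$ by Corollary~\ref{cor-deg-lt}(1), and $\deg$ of the empty tangle is $0$; moreover for every basis element $\alpha\in B$ one has $\deg\alpha=\bar{k}_\alpha$, so $M$ contains all admissible colorings, i.e.\ $\Lambda\subseteq M\subseteq\balLambda$. Since $\balLambda$ is by definition the subgroup generated by $\Lambda$, the submonoid $M$ contains a set of generators of the group $\balLambda$.

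Next I would observe that $R_\zeta$ is finite. The congruences \eqref{eqn-center} cutting out $\Lambda_\zeta$ are modulo $n$, so $n\balLambda\subseteq\Lambda_\zeta$, whence $R_\zeta$ is a quotient of $\balLambda/n\balLambda\cong(\ints/n)^{\abs{\bar\quasi}}$. (Alternatively this follows from Lemma~\ref{lemma-res-size}.) Now let $\bar{M}$ be the image of $M$ under the quotient map $\balLambda\to R_\zeta$; by construction $\bar{M}$ is precisely the image of $\deg_\zeta$. It is a submonoid of the finite group $R_\zeta$, and every submonoid $S$ of a finite group is in fact a subgroup: given $s\in S$, the elements $s,2s,3s,\dots$ cannot be pairwise distinct, so $ms=m's$ for some $m<m'$, hence $(m'-m)s=0$ and $-s=(m'-m-1)s\in S$. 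Therefore $\bar{M}$ is a subgroup of $R_\zeta$ containing the images of a generating set of $\balLambda$, so $\bar{M}=R_\zeta$, which is the claimed surjectivity.

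I do not expect a genuine obstacle in this argument; the only steps that need a little attention are verifying that $\deg$ really lands in $\balLambda$ and that $M$ contains a generating set (both immediate from the parametrization of $B$ by admissible colorings), together with the elementary fact that a submonoid of a finite group is a subgroup. It is the finiteness of $R_\zeta$, guaranteed either by $n\balLambda\subseteq\Lambda_\zeta$ or by Lemma~\ref{lemma-res-size}, that makes this last fact applicable and turns the generation of $\balLambda$ by $\Lambda$ into genuine surjectivity onto the quotient.
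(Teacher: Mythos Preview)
Your argument is correct and follows essentially the same route as the paper: the image of $\deg_\zeta$ is a submonoid of the finite group $R_\zeta$, hence a subgroup, and it contains the images of a generating set of $\balLambda$, so it is all of $R_\zeta$. The only cosmetic difference is that the paper appeals to Lemma~\ref{lemma-bal-basis} (the specific basis $\{\bar{k}_a:a\in\bar\quasi\}$ lies in the image of $\deg$) rather than to the inclusion $\Lambda\subseteq M$ and the definition of $\balLambda$ as the group generated by $\Lambda$; both observations serve the same purpose.
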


\begin{proof}
The image of $\deg_\zeta$ is a submonoid of $R_\zeta$. Thus, it is also a subgroup since $R_\zeta$ is finite. By Lemma~\ref{lemma-bal-basis}, the image of $\deg$ generates $\balLambda$. Thus, the image of $\deg_\zeta$ generates $R_\zeta$, which implies $\deg_\zeta$ is surjective.
\end{proof}

By \cite[Corollary~5.2, Theorem~6.1]{FKLDimension}, Theorem~\ref{thm-dim-z} follows from Lemmas~\ref{lemma-res-size} and \ref{lemma-deg-sur}.

\subsection{Proof of Lemma~\ref{lemma-res-size}}

Let $\Lambda^\rmmatch\subset\balLambda$ be the subgroup of matching vectors, that is, balanced vectors $\bar{k}$ such that $k(e)\equiv k(e')\pmod{2}$ for any pair of boundary edges $e,e'$. 

Using the notations of Lemma~\ref{lemma-central-boundary}, for a boundary component $C$ with an even number $r$ of ideal points, let
\begin{equation}
\bar{k}_C=\sum_{i=1}^r(-1)^i\bar{k}_{\hat{e}_i},
\end{equation}
where $\bar{k}_{\hat{e}_i}\in\balLambda$ is the edge coloring of $X_{\hat{e}_i}$.
Up to $\bmod n$, this is the edge coloring of the first type of central elements in Lemma~\ref{lemma-central-boundary} with $k=n-1$. By construction, $\bar{k}$ is balanced and solves \eqref{eqn-center}. Let $\Lambda_B$ be the subgroup generated by all such vectors.

\begin{lemma}\label{lemma-balanced-index}
$\abs{\ints^{\bar{\quasi}}/\balLambda}=2^{\abs{\bar\quasi}-r(\surface)}$.
\end{lemma}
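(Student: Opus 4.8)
The plan is to compute the index $\abs{\ints^{\bar\quasi}/\balLambda}$ by exhibiting an explicit free basis of $\balLambda$ and comparing it to the standard basis of $\ints^{\bar\quasi}$, which amounts to computing the absolute value of the determinant of the change-of-basis matrix. By Lemma~\ref{lemma-bal-basis}, $\balLambda$ is a free abelian group of full rank $\abs{\bar\quasi}$ with basis the rows of
\[
\bar{K}=\begin{pmatrix}P_+&0\\J^TP_+&2I\end{pmatrix},
\]
so the index is exactly $\abs{\det\bar{K}}$. Expanding the block determinant, $\det\bar{K}=\det(P_+)\cdot\det(2I_{\hat\quasi_\partial})=2^{\abs{\hat\quasi_\partial}}\det(P_+)=2^{v}\det(P_+)$, since $\abs{\hat\quasi_\partial}=\abs{\quasi_\partial}=v$. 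So the whole problem reduces to showing $\abs{\det P_+}=2^{\abs{\bar\quasi}-r(\surface)-v}=2^{\abs{\quasi}-r(\surface)}$, using $\abs{\bar\quasi}-v=\abs{\quasi}$.

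The key step is therefore to evaluate $\det P_+$ for a triangulation $\quasi$. Here I would use Lemma~\ref{lemma-H-inverse}, which says $P_+(JJ^T-Q)=2I_{\bar\quasi}$ — wait, that is an identity among $\quasi\times\quasi$ matrices, giving $\det(P_+)\det(JJ^T-Q)=2^{\abs{\quasi}}$. This alone does not pin down $\det P_+$ unless I also know $\det(JJ^T-Q)$. A cleaner route: I would instead compute $\det P_+$ directly, face by face. The matrix $P_+$ is built as a sum over ideal points of the small ``vertex'' matrices $P_{+,v}$, and $JJ^T-Q$ likewise decomposes triangle-by-triangle; for a single triangle $\tau$ with edges $a,b,c$, the relevant $3\times 3$ block of $JJ^T-Q$ has determinant $2$ (a routine check from the definition of $Q_\tau$). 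Alternatively, and more robustly, I would argue by building the triangulation up one face at a time, tracking how $\det P_+$ changes when a triangle or a punctured monogon is glued on. Each triangle glued along one or two existing edges contributes a controlled factor; each punctured monogon likewise. Summing the exponents over $f-p$ triangles and $p$ monogons and reconciling with the Euler-characteristic count $v-\abs{\quasi}+f=\chi(\surclose)$ should produce exactly $2^{\abs{\quasi}-r(\surface)}$.

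Concretely, the cleanest argument I expect to use: combine the two block identities. We have $\det\bar{K}=2^v\det P_+$, and from Lemma~\ref{lemma-H-inverse}, $\det P_+=2^{\abs{\quasi}}/\det(JJ^T-Q)$. It therefore suffices to show $\abs{\det(JJ^T-Q)}=2^{r(\surface)}$. Now $JJ^T$ is the diagonal matrix that is $1$ on boundary edges and $0$ on interior edges, and $Q=\sum_\tau Q_\tau$ is the ``rotation'' matrix supported on pairs of edges sharing a triangle. I would compute $\det(JJ^T-Q)$ by the face-by-face / gluing induction: start from a single triangle (where a direct $3\times 3$ computation gives determinant $\pm2$, and $r=1$), and show that attaching one more triangle along an edge multiplies $\abs{\det}$ by $2$ while increasing $r(\surface)$ by $1$ (the new triangle adds $\chi$-contribution consistent with $r\mapsto r+1$), and that attaching a punctured monogon multiplies $\abs{\det}$ by $1$ (since a monogon contributes a single boundary edge bounding it, and the $Q$-entries around it are arranged so the new row/column is handled by a unit cofactor) while also leaving $r(\surface)$ unchanged in the appropriate bookkeeping — here one must be careful, since a punctured monogon does change $v-\chi$. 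The bookkeeping of exactly how $r(\surface)$ and $\abs{\det}$ move in lockstep under each elementary gluing is the main obstacle, and it is essentially the same Euler-characteristic accounting already used in the lemma computing $\abs{\bar\quasi}$: each triangle contributes $+1$ to both the exponent and to $r(\surface)$ after normalizing, each monogon contributes $0$ to the exponent, and the initial triangle seeds the base case.

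The main obstacle is thus not any single determinant evaluation but getting the inductive gluing bookkeeping exactly right: one must pick an order of attaching faces so that at each stage the partially-glued surface still has the matrices $J,Q,P_+$ well-defined (boundary edges of the partial surface versus edges that will become interior), and verify that the elementary matrix operation corresponding to each gluing is a determinant-scaling by the claimed power of $2$. Degenerate small cases — the once-punctured monogon (Figure~\ref{punctured-monogon}) and surfaces with very few faces — should be checked by hand as base cases. Once $\abs{\det(JJ^T-Q)}=2^{r(\surface)}$ is established, the lemma follows immediately: $\abs{\ints^{\bar\quasi}/\balLambda}=\abs{\det\bar K}=2^v\cdot 2^{\abs{\quasi}}/2^{r(\surface)}=2^{\abs{\bar\quasi}-r(\surface)}$, using $v+\abs{\quasi}=\abs{\bar\quasi}$.
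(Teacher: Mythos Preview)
Your strategy---computing the index as $\abs{\det\bar K}$ via Lemma~\ref{lemma-bal-basis} and then reducing, through Lemma~\ref{lemma-H-inverse}, to the claim $\abs{\det(JJ^T-Q)}=2^{r(\surface)}$---is sound and genuinely different from the paper's argument. However, your base case is miscalculated: for a single triangle one has $v=3$ and $\chi(\surclose)=1$, so $r(\surface)=2$, and a direct computation gives $\det(I_3-Q_\tau)=4=2^2$, not $\pm2$. This is still consistent with the target formula, so the slip is harmless in itself, but it signals that the gluing bookkeeping you defer is more delicate than you suggest. You never actually carry out the induction: attaching a triangle or a punctured monogon simultaneously changes $v$, $\chi$, $\abs{\quasi}$, and the boundary/interior status of edges, and tracking how the matrix $JJ^T-Q$ transforms under such moves (especially when a gluing identifies vertices, closes up a boundary component, or creates genus) is exactly the ``main obstacle'' you acknowledge but do not resolve. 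As written, the proposal is a correct reduction followed by an unexecuted plan.

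The paper sidesteps all determinant and gluing arguments with a short cohomological count. By Proposition~\ref{prob-bal} one has $(2\ints)^{\bar\quasi}\subset\balLambda\subset\ints^{\quasi}\times(2\ints)^{\hat\quasi_\partial}$, and the quotient $\balLambda/(2\ints)^{\bar\quasi}$ is identified with the group $Z^1(\surclose;\ints/2)$ of simplicial $1$-cocycles for the CW structure given by $\quasi$: the balanced parity condition on the three edges of each triangular face is precisely the cocycle condition. Then $\dim_{\ints/2}Z^1=\dim C^0-\dim H^0+\dim H^1=v-1+(2g+b-1)=r(\surface)$, and the index drops out as $2^{\abs{\bar\quasi}}/2^{r(\surface)}$. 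Your determinant approach would eventually reprove this same number, but the cohomological route packages the Euler-characteristic bookkeeping automatically rather than through an ad hoc induction.
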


\begin{proof}
By Proposition~\ref{prob-bal}, $(2\ints)^{\bar\quasi}\subset\balLambda\subset\ints^\quasi\times(2\ints)^{\hat\quasi_\partial}$. Consider the quotient $\balLambda/(2\ints)^{\bar\quasi}$. By the second inclusion, each element of the quotient is uniquely dentermined by the $\bmod2$ reduction of the $\ints^\quasi$ components, which can be interpreted as a CW cochain $\quasi\to\ints/2$. Then the balanced condition translates exactly to the cocycle condition. Thus, we have an isomorphism
\begin{equation}
\balLambda/(2\ints)^{\bar\quasi}\cong Z^1(\surclose;\ints/2).
\end{equation}
Consider the coboundary map
\begin{equation}
\delta:C^0(\surclose;\ints/2)\to Z^1(\surclose;\ints/2),
\end{equation}
By definition,
\begin{equation}
\ker\delta=Z^0(\surclose;\ints/2)=H^0(\surclose;\ints/2),\quad \coker\delta=H^1(\surclose;\ints/2).
\end{equation}
Thus, over $\ints/2$,
\begin{equation}
\dim Z^1=\dim C^0-\dim H^0+\dim H^1=v-1+(2g+b-1)=r(\surface).
\end{equation}
Therefore,
\begin{equation}
\abs{\ints^{\bar\quasi}/\balLambda}
=\frac{\abs{\ints^{\bar\quasi}/(2\ints)^{\bar\quasi}}}{\abs{\balLambda/(2\ints)^{\bar\quasi}}}
=\frac{2^{\abs{\bar\quasi}}}{2^{r(\surface)}}.\qedhere
\end{equation}
\end{proof}

\begin{lemma}\label{lemma-bdry-summand}
$\Lambda_B$ is a direct summand of $\balLambda$.

If $d=1$, $\Lambda_\zeta=\Lambda_B+N\balLambda$. If $d=2$, $\Lambda_\zeta=\Lambda_B+N\Lambda^\rmmatch$.
\end{lemma}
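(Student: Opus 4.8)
The plan is to carry everything out in the $\ints$-basis $\{\bar{k}_a:a\in\bar\quasi\}$ of $\balLambda$ from Lemma~\ref{lemma-bal-basis}, in which the vectors $\bar{k}_{\hat{e}}$ ($\hat{e}\in\hat\quasi_\partial$) form part of the basis. In this basis the generator $\bar{k}_C=\sum_{i=1}^{r}(-1)^i\bar{k}_{\hat{e}_i}$ of $\Lambda_B$ is a $\{0,\pm1\}$-vector supported on the coordinates indexed by the boundary edges of $C$; these supports are pairwise disjoint as $C$ runs over the boundary components with an even number of ideal points, and each $\bar{k}_C$ is primitive. Replacing one basis vector $\bar{k}_{\hat{e}_1}$ by $\bar{k}_C$ on each such component (a unimodular change within that block) exhibits $\{\bar{k}_C\}$ as part of a basis of $\balLambda$, so $\Lambda_B$ is a direct summand; this is the first assertion. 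I will also use repeatedly that $\balLambda\cap N\ints^{\bar\quasi}=N\balLambda$: since $\ints^{\bar\quasi}/\balLambda$ is a $2$-group by Lemma~\ref{lemma-balanced-index} and $N$ is odd whenever $d\in\{1,2\}$, a balanced vector with all coordinates divisible by $N$ is $N$ times a balanced vector.

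For $d=1$ ($n=N$ odd) the inclusion $\Lambda_B+N\balLambda\subseteq\Lambda_\zeta$ is clear: $N\balLambda$ solves \eqref{eqn-center} trivially, and each $\bar{k}_C$ is a balanced solution of \eqref{eqn-center}, being (modulo $n$) the degree of a central element of Lemma~\ref{lemma-central-boundary}. For the reverse inclusion, let $\bar{k}=(k,2\hat{k})$ be a balanced solution of \eqref{eqn-center-1}. The computation in the proof of Lemma~\ref{lemma-lt-cancel}, Case~1 (multiply the first relation by $J^T$ and go around each boundary component) gives $\hat{k}(e_i)\equiv(-1)^{i-1}\hat{k}(e_1)\pmod N$ along $C=(e_1,\dots,e_r)$, and $\hat{k}\equiv0\pmod N$ on $C$ when $r$ is odd. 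So the $\hat\quasi_\partial$-part of $\bar{k}$ is, mod $N$, an integer combination of the alternating patterns realized by the $\bar{k}_C$; choose integers $\lambda_C$ accordingly so that $\bar{k}':=\bar{k}-\sum_C\lambda_C\bar{k}_C\in\Lambda_\zeta$ has $\hat\quasi_\partial$-part divisible by $N$. Relation \eqref{eqn-center-1} then forces the $\quasi$-part of $\bar{k}'$ to equal $-P_+J\hat{k}'$, hence to be divisible by $N$ as well. Thus every coordinate of $\bar{k}'$ is divisible by $N$, so $\bar{k}'\in N\balLambda$, and $\bar{k}=\sum_C\lambda_C\bar{k}_C+\bar{k}'\in\Lambda_B+N\balLambda$.

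For $d=2$ ($n=2N$, $N$ odd) I would run the same argument on the split form \eqref{eqn-center-2} of the center relations, with two extra checks. For $\Lambda_B+N\Lambda^\rmmatch\subseteq\Lambda_\zeta$, the only new relation to verify is the mod-$2$ one: by \eqref{eq-PJk} it says $k(b_1)+k(b_2)\equiv0\pmod2$ for the boundary edges $b_1,b_2$ counterclockwise to any given edge, which holds for a matching vector since then $k(b_1)\equiv k(b_2)\pmod2$. For the reverse inclusion, the relations modulo $N$ are the same as in the $d=1$ case, so the same recipe produces $\bar{k}'=\bar{k}-\sum_C\lambda_C\bar{k}_C\in N\balLambda$; it then remains only to see that $\bar{k}'/N\in\Lambda^\rmmatch$. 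Since $\bar{k}'\in\Lambda_\zeta$ satisfies $P_+Jk'_\partial\equiv0\pmod2$, \eqref{eq-PJk} gives $k'(b_1)+k'(b_2)\equiv0\pmod2$ whenever two boundary edges $b_1,b_2$ are joined by an edge of $\quasi$; connectedness of $\surface$ propagates this parity identity to every pair of boundary edges, and dividing by the odd number $N$ preserves parities, so $k'/N$ is matching.

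The step I expect to be the main obstacle is keeping the two descriptions of $\balLambda$ in sync: the decomposition $\bar{k}=(k,2\hat{k})$ in which \eqref{eqn-center} and its reductions are written, versus the basis $\{\bar{k}_a\}$ in which $\Lambda_B$ (and $N\balLambda$, $N\Lambda^\rmmatch$) are coordinate objects, and in particular establishing the passage $\balLambda\cap N\ints^{\bar\quasi}=N\balLambda$. With that in hand, the rest is the mod-$N$ ``around the boundary'' bookkeeping already done for Lemma~\ref{lemma-lt-cancel}, plus the two mod-$2$ matching checks in the $d=2$ case.
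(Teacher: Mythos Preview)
Your proof is correct and follows essentially the same approach as the paper. The paper's own proof is extremely terse---it gives the same basis-replacement argument for the direct summand claim and then simply says the last two statements ``follow from the argument in the proof of Theorem~\ref{thm-center-root-1}''; you have faithfully reconstructed that argument (the $\bmod N$ boundary bookkeeping from Lemma~\ref{lemma-lt-cancel} together with the connectedness/matching check for $d=2$), and your use of $\balLambda\cap N\ints^{\bar\quasi}=N\balLambda$ via the $2$-group structure is a clean way to finish.
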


\begin{proof}
By Lemma~\ref{lemma-bal-basis}, $\balLambda$ is free with basis $\bar{k}_a,a\in\bar\quasi$. For each boundary component $C$ with an even number of ideal points, we can replace one of the basis elements of $\balLambda$ with the corresponding generator of $\Lambda_B$. This shows that $\Lambda_B$ is a direct summand.

The last two statements follow from the argument in the proof of Theorem~\ref{thm-center-root-1}.
\end{proof}

\begin{lemma}
$\abs{\balLambda/\Lambda^\rmmatch}=2^{2\lfloor\frac{v-1}{2}\rfloor}$.
\end{lemma}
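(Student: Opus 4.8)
The plan is to reduce the computation to a mod-$2$ cocycle count on the triangulated surface $\surclose$, reusing the identification in the proof of Lemma~\ref{lemma-balanced-index}. Since $(2\ints)^{\bar\quasi}$ lies in both $\balLambda$ and $\Lambda^\rmmatch$ (an all-even vector is balanced by Proposition~\ref{prob-bal} and is vacuously matching), we get $\balLambda/\Lambda^\rmmatch\cong\bigl(\balLambda/(2\ints)^{\bar\quasi}\bigr)\big/\bigl(\Lambda^\rmmatch/(2\ints)^{\bar\quasi}\bigr)$. Under the isomorphism $\balLambda/(2\ints)^{\bar\quasi}\cong Z^1(\surclose;\ints/2)$ of Lemma~\ref{lemma-balanced-index}, which sends $\bar k$ to the cochain $e\mapsto\bar k(e)\bmod2$ on $\quasi$, the matching condition becomes: the cocycle is constant on the set $\quasi_\partial$ of boundary edges. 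So it suffices to compute the index in $Z^1(\surclose;\ints/2)$ of $\rho^{-1}(\Delta)$, where $\rho\colon Z^1(\surclose;\ints/2)\to C^1(\partial\surclose;\ints/2)$ is restriction to boundary edges and $\Delta=\{0,\mathbf 1\}$ is the subgroup of constant boundary cochains ($\mathbf 1$ the all-ones cochain).

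The key step is to identify $W:=\operatorname{im}\rho$; I claim $W=\{c_0:\sum_{e\in\quasi_\partial}c_0(e)=0\}$, so $|W|=2^{v-1}$. Indeed, for a $2$-cochain of the form $\delta c$ one has $\sum_{\tau\in\face(\quasi)}(\delta c)(\tau)=\sum_e m_e\,c(e)$, where $m_e$ is the number of occurrences of $e$ in the boundaries of the faces; since each interior edge has $m_e=2$ and each boundary edge has $m_e=1$, this equals $\sum_{e\in\quasi_\partial}c(e)\bmod2$. Hence if $c$ is a cocycle then $\sum_{e\in\quasi_\partial}\rho(c)(e)=0$, giving one inclusion. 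Conversely, lift an arbitrary $c_0$ to some $c_1\in C^1(\surclose;\ints/2)$; one can adjust $c_1$ by a cochain supported on interior edges to make it a cocycle exactly when $\delta c_1\in\delta\bigl(C^1(\surclose,\partial\surclose;\ints/2)\bigr)$, and the quotient $C^2(\surclose;\ints/2)/\delta\bigl(C^1(\surclose,\partial\surclose;\ints/2)\bigr)=H^2(\surclose,\partial\surclose;\ints/2)\cong H_0(\surclose;\ints/2)=\ints/2$ is detected by $d\mapsto\sum_\tau d(\tau)$ (evaluation on the mod-$2$ fundamental class); so the obstruction is $\sum_\tau(\delta c_1)(\tau)=\sum_{e\in\quasi_\partial}c_0(e)$, which vanishes by hypothesis. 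As a consistency check, $\dim W=v-1=\dim Z^1(\surclose;\ints/2)-\dim\ker\rho$, since $r(\surface)=v+2g+b-2$ and $\ker\rho=Z^1(\surclose,\partial\surclose;\ints/2)=H^1(\surclose,\partial\surclose;\ints/2)\cong H_1(\surclose;\ints/2)$ has dimension $2g+b-1$ (the middle equality uses that every ideal point of $\marked_\partial$ lies on $\partial\surclose$, so relative $0$-cochains vanish).

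Finally, $[\,Z^1(\surclose;\ints/2):\rho^{-1}(\Delta)\,]=|W|/|W\cap\Delta|=2^{v-1}/|W\cap\Delta|$. The zero cochain always lies in $W$, while $\mathbf 1\in W$ iff $\sum_{e\in\quasi_\partial}1=v$ is even; thus $|W\cap\Delta|$ is $2$ when $v$ is even and $1$ when $v$ is odd. Therefore $|\balLambda/\Lambda^\rmmatch|=2^{v-2}$ if $v$ is even and $2^{v-1}$ if $v$ is odd, and in both cases this equals $2^{2\lfloor(v-1)/2\rfloor}$, as claimed.

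The step I expect to be the main obstacle is the identification of $W$: the cohomological fact that $\operatorname{im}\bigl(H^1(\surclose;\ints/2)\to H^1(\partial\surclose;\ints/2)\bigr)$ is the sum-zero hyperplane is classical, but here it is needed at the cochain level, which forces the explicit incidence bookkeeping (interior edges two-sided, boundary edges one-sided among the faces $\face(\quasi)$) together with the fact that the relative complex $C^\ast(\surclose,\partial\surclose)$ has trivial $C^0$ because every vertex of the triangulation lies on $\partial\surclose$. The remaining reduction mod $2$ and the $\Delta$-bookkeeping are routine.
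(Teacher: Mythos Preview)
Your argument is correct and follows essentially the same route as the paper: define the mod-$2$ restriction map to $(\ints/2)^{\quasi_\partial}$, identify its image as the sum-zero hyperplane, and compute the index of the preimage of the constant subgroup. The paper does this in two lines by working directly with $\Delta_2$ on $\balLambda$ and simply asserting that the image consists of vectors with zero component sum, whereas you route the same map through the identification $\balLambda/(2\ints)^{\bar\quasi}\cong Z^1(\surclose;\ints/2)$ of Lemma~\ref{lemma-balanced-index} and supply a full cohomological justification (the incidence count plus the $H^2(\surclose,\partial\surclose)$ obstruction) for that image claim; this is extra scaffolding rather than a different idea.
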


\begin{proof}
Consider the $\bmod{2}$ boundary grading map $\Delta_2:\balLambda\to(\ints/2)^{\quasi_\partial}$ given by
\begin{equation}
\Delta_2(\bar{k})(e)=\bar{k}(e)\bmod{2}.
\end{equation}
The image of $\Delta_2$ consists of vectors whose components sum to $0$, which is an index $2$ subgroup of $(\ints/2)^{\quasi_\partial}$. Thus, $\abs{\image\Delta_2}=2^{v-1}$.

Let $\mathbf{1}\subset(\ints/2)^{\quasi_\partial}$ be the subgroup generated by the all $1$ vector. Then $\Lambda^\rmmatch=\Delta_2^{-1}(\mathbf{1})$ by definition. Thus, the induced map
\begin{equation}
\bar{\Delta}_2:\balLambda/\Lambda^\rmmatch\to\image \Delta_2/(\image \Delta_2\cap\mathbf{1})
\end{equation}
is an isomorphism. The intersection $\image \Delta_2\cap\mathbf{1}$ is $\mathbf{1}$ when $v$ is even, and it is trivial when $v$ is odd. Thus, $\abs{\balLambda/\Lambda^\rmmatch}=2^{v-2}$ when $v$ is even, and $\abs{\balLambda/\Lambda^\rmmatch}=2^{v-1}$ when $v$ is odd.
\end{proof}

\begin{lemma}
When $d=4$, $\abs{\Lambda_\zeta/(n\ints)^{\bar{\quasi}}}=2^{\abs{\quasi}+b}N^{b_2}$.
\end{lemma}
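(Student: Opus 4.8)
I would prove this by a direct count of solutions modulo $n=4N$, building on the $\bmod 2N$ reformulation of \eqref{eqn-center} carried out in the proof of Lemma~\ref{lemma-lt-cancel}, Case~3. Recall that for $\bar k=(k,2\hat k)\in\balLambda$ with $k\in\ints^\quasi$, $\hat k\in\ints^{\hat\quasi_\partial}$, one has $\bar k\in\Lambda_\zeta$ exactly when $k_\partial:=J^Tk\equiv0\pmod{2N}$ (so that $k_\delta:=\tfrac12 k_\partial-\hat k$ is integral) and $P_+Jk_\delta\equiv k\pmod{2N}$, this being \eqref{eqn-center-4}. Since $n=4N$ is even, $(n\ints)^{\bar\quasi}\subseteq(2\ints)^{\bar\quasi}\subseteq\balLambda$ and every congruence is trivial on it, so $(n\ints)^{\bar\quasi}\subseteq\Lambda_\zeta$, and an element of the quotient is a pair $(k\bmod 4N,\ \hat k\bmod 2N)$ satisfying the two congruences above together with balancedness of $\bar k$; by Proposition~\ref{prob-bal} the latter depends only on $k\bmod 2$.

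The first task is to strip off the free coordinates. The congruence $k_\partial\equiv0\pmod{2N}$ forces $k|_{\quasi_\partial}\bmod 4N$ into $\{0,2N\}^{\quasi_\partial}$; write $k|_{\quasi_\partial}=2\mu$ with $\mu\in\{0,N\}^{\quasi_\partial}$, so $k_\delta\equiv\mu-\hat k\pmod{2N}$. Reading $P_+Jk_\delta\equiv k\pmod{2N}$ on the interior edges $\quasi\setminus\quasi_\partial$ pins down $k$ there modulo $2N$ as a function of $k_\delta$; its lift modulo $4N$ is unconstrained (balancedness sees only $k\bmod 2$, which is already fixed), contributing a factor $2^{\abs{\quasi}-v}$ with $v=\abs{\quasi_\partial}$. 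Reading the same equation on the boundary edges gives $(J^TP_+J)k_\delta\equiv0\pmod{2N}$, since $k|_{\quasi_\partial}=2\mu\equiv0\pmod{2N}$. Now $\mu$ is free ($2^v$ choices) and, for each $\mu$, the map $\hat k\mapsto k_\delta$ is a bijection onto $(\ints/2N)^{\quasi_\partial}$; collecting the factors gives $\abs{\Lambda_\zeta/(n\ints)^{\bar\quasi}}=2^{\abs{\quasi}}\cdot M$, where $M$ is the number of $k_\delta\in(\ints/2N)^{\quasi_\partial}$ satisfying both $(J^TP_+J)k_\delta\equiv0\pmod{2N}$ and the requirement that $(P_+Jk_\delta)|_{\quasi\setminus\quasi_\partial}$ reduce mod $2$ to the interior part of a balanced vector.

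Next I would show the balancedness clause in $M$ is automatic (so that $M$ counts only the first congruence). When $(J^TP_+J)k_\delta\equiv0\pmod{2N}$ the vector $P_+Jk_\delta$ already vanishes $\bmod 2$ on $\quasi_\partial$, so it is enough to check that $P_+Jk_\delta\bmod 2$ is the $\quasi$-part of a balanced vector for \emph{every} $k_\delta$. This is linear algebra over $\ints/2$: by Lemma~\ref{lemma-H-inverse}, $H:=JJ^T-Q$ has $P_+H=HP_+=2I$, hence the elementary divisors of $P_+$ divide $2$ and $\rank(P_+\bmod2)+\rank(H\bmod2)=\abs{\quasi}$; together with $H\equiv H^T\pmod2$ (as $H+H^T=2JJ^T$) this forces the column space and row space of $P_+\bmod2$ both to equal $\ker(H\bmod2)$, and the row space of $P_+\bmod 2$ is exactly the $\quasi$-part of $\balLambda\bmod2$ by Lemma~\ref{lemma-bal-basis}. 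Since $P_+Jk_\delta$ lies in the column space of $P_+$, the clause holds. It then remains to count $M=\#\{k_\delta:(J^TP_+J)k_\delta\equiv0\pmod{2N}\}$: as in Lemma~\ref{lemma-lt-cancel}, Case~3, $J^TP_+J$ is block-diagonal over the boundary components, the block of a component with ideal points $e_1,\dots,e_r$ (cyclically ordered) imposing $k_\delta(e_i)+k_\delta(e_{i+1})\equiv0\pmod{2N}$; hence $k_\delta$ is determined by $k_\delta(e_1)$ up to sign, with the extra relation $2k_\delta(e_1)\equiv0$ when $r$ is odd, yielding $2N$ solutions if $r$ is even and $2$ if $r$ is odd. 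Multiplying over the $b$ components, $b_2$ of them even, $M=(2N)^{b_2}2^{\,b-b_2}=2^bN^{b_2}$, so $\abs{\Lambda_\zeta/(n\ints)^{\bar\quasi}}=2^{\abs{\quasi}+b}N^{b_2}$.

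The step I expect to be most delicate is the bookkeeping in the second paragraph — keeping straight which coordinates are free modulo which of $2,N,2N,4N$, and in particular verifying that lifting the interior coordinates from $\bmod 2N$ to $\bmod 4N$ contributes exactly $2^{\abs{\quasi}-v}$ with no hidden balancedness obstruction — and, to a lesser extent, the mod-$2$ argument that $P_+Jk_\delta$ is always balanced. The remaining inputs (the block-cyclic shape of $J^TP_+J$ and the identity $P_+H=2I$) are already supplied by Lemma~\ref{lemma-lt-cancel}, Case~3, and Lemma~\ref{lemma-H-inverse}. As a sanity check, combining with $\abs{\balLambda/(n\ints)^{\bar\quasi}}=n^{\abs{\bar\quasi}}/2^{\abs{\bar\quasi}-r}$ recovers $\abs{R_\zeta}=2^{2g+2v-2}N^{\abs{\bar\quasi}-b_2}$, matching $D_\zeta$ for $d=4$.
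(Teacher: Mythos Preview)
Your proof is correct and follows essentially the same counting strategy as the paper: parametrize solutions modulo $n=4N$ by the boundary values $k_\partial\in\{0,2N\}^{\quasi_\partial}$ ($2^v$ choices), the vector $k_\delta$ subject to $(J^TP_+J)k_\delta\equiv0\pmod{2N}$ ($(2N)^{b_2}2^{b-b_2}$ choices via the boundary-cycle analysis), and the free lift of interior coordinates from $\bmod\,2N$ to $\bmod\,4N$ ($2^{\abs{\quasi}-v}$ choices). The one place you go beyond the paper is in justifying that balancedness is automatic: the paper dispatches this in a single sentence citing Lemma~\ref{lemma-bal-basis}, whereas you supply the underlying mod-$2$ linear algebra (elementary divisors of $P_+$ divide $2$, hence $\rank(P_+\bmod2)+\rank(H\bmod2)=\abs{\quasi}$, and $H\equiv H^T\pmod2$ forces the row and column spaces of $P_+\bmod2$ to coincide) --- this is a genuine clarification of a step the paper leaves implicit.
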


\begin{proof}
It is clear that $(n\ints)^{\bar{\quasi}}\subset \Lambda_\zeta$, so the quotient makes sense. By Lemma~\ref{lemma-bal-basis}, a solution of the form $\bar{k}=(k,2\hat{k})$ to \eqref{eqn-center}, or equivalently \eqref{eqn-center-4}, is always balanced. Thus, we only need to count the $\bmod n$ solutions of the above form to \eqref{eqn-center-4}. The solutions can be generated in the following way.
\begin{enumerate}
\item For each boundary edge $e$, $k(e)=k_\partial(e)$ can be $0$ or $2N$.
\item For each boundary component, the boundary grading $2k_\delta=k_\partial+2\hat{k}$ is determined by a single edge, as argued in the proof of Theorem~\ref{thm-center-root-1}.
\begin{enumerate}
\item If there is an even number of ideal points, $2k_\delta$ can be any of the $2N$ even numbers for a fixed edge, and the rest are determined.
\item If there is an odd number of ideal points, $2k_\delta$ can be either $0$ or $2N$.
\end{enumerate}
\item For each boundary edge $e$, $2\hat{k}(e)$ is determined the chosen $k_\partial(e)$ and $2k_\delta(e)$.
\item For each internal edge $e$, $\bar{k}(e)\bmod{2N}$ is determined by the boundary gradings $k_\delta\bmod{2N}$. Thus, there are two choices $\bmod{n}$.
\end{enumerate}
Thus, the count is
\begin{equation}
\abs{\Lambda_\zeta/(n\ints)^{\bar{\quasi}}}
=2^v\cdot\left((2N)^{b_2}\cdot2^{b-b_2}\right)\cdot2^{\abs{\quasi}-v}
=2^{\abs{\quasi}+b}N^{b_2}.\qedhere
\end{equation}
\end{proof}

\begin{proof}[Proof of Lemma~\ref{lemma-res-size}]
When $d=1$,
\begin{equation}
R_\zeta\cong\frac{\balLambda/\Lambda_B}{\Lambda_\zeta/\Lambda_B}
=\frac{\balLambda/\Lambda_B}{N\balLambda/\Lambda_B}.
\end{equation}
By Lemma~\ref{lemma-bal-basis}, $\balLambda$ is a free abelian group of rank $\abs{\bar\quasi}$. Since $\Lambda_B$ is a direct summand of $\balLambda$ of rank $b_2$, $\balLambda/\Lambda_B$ is free with rank $\abs{\bar{\quasi}}-b_2$. Therefore, $\abs{R_\zeta}=N^{\abs{\bar{\quasi}}-b_2}=D_\zeta$.

When $d=2$, the same argument using the finite index subgroup $\Lambda^\rmmatch$ in place of $\balLambda$ shows $\abs{\Lambda^\rmmatch/\Lambda_\zeta}=N^{\abs{\bar{\quasi}}-b_2}$. Thus,
\begin{equation}
\abs{R_\zeta}=\abs{\balLambda/\Lambda^\rmmatch}\abs{\Lambda^\rmmatch/\Lambda_\zeta}
=2^{2\lfloor\frac{v-1}{2}\rfloor}N^{\abs{\bar{\quasi}}-b_2}=D_\zeta.
\end{equation}

When $d=4$, we have $(n\ints)^{\bar{\quasi}}\subset \Lambda_\zeta\subset\balLambda\subset\ints^{\bar{\quasi}}$. Then
\begin{align}
\abs{R_\zeta}
&=\frac{\abs{\ints^{\bar{\quasi}}/(n\ints)^{\bar{\quasi}}}}{\abs{\ints^{\bar{\quasi}}/\balLambda}\abs{\Lambda_\zeta/(n\ints)^{\bar{\quasi}}}}
=\frac{n^\abs{\bar{\quasi}}}{2^{\abs{\bar\quasi}-r(\surface)}\cdot2^{\abs{\quasi}+b}N^{b_2}}\\
&=2^{2g+2v-2}N^{\abs{\bar{\quasi}}-b_2}=D_\zeta.\qedhere
\end{align}
\end{proof}

\printbibliography

@article{FKLUnicity,
  title={Unicity for representations of the Kauffman bracket skein algebra},
  author={Frohman, Charles and Kania-Bartoszynska, Joanna and L{\^e}, Thang},
  journal={Inventiones mathematicae},
  volume={215},
  pages={609--650},
  year={2019},
  publisher={Springer}
}

@article{FKLDimension,
  title={Dimension and trace of the Kauffman bracket skein algebra},
  author={Frohman, Charles and Kania-Bartoszynska, Joanna and L{\^e}, Thang},
  journal={Transactions of the American Mathematical Society, Series B},
  volume={8},
  number={18},
  pages={510--547},
  year={2021}
}

@article{LYSL2,
  title={Quantum traces and embeddings of stated skein algebras into quantum tori},
  author={L{\^e}, Thang TQ and Yu, Tao},
  journal={Selecta Mathematica},
  volume={28},
  number={4},
  pages={66},
  year={2022},
  publisher={Springer}
}

@article{LeTriang,
  title={Triangular decomposition of skein algebras},
  author={L{\^e}, Thang TQ},
  journal={Quantum Topology},
  volume={9},
  number={3},
  pages={591--632},
  year={2018}
}

@article{BLFrob,
  title={The Chebyshev--Frobenius homomorphism for stated skein modules of 3-manifolds},
  author={Bloomquist, Wade and L{\^e}, Thang TQ},
  journal={Mathematische Zeitschrift},
  pages={1--43},
  year={2020},
  publisher={Springer}
}

@article{Pr,
  title={Fundamentals of Kauffman bracket skein modules},
  author={Przytycki, J{\'o}zef H},
  journal = {Kobe Journal of Mathematics},
  volume = {16},
  number = {1},
  pages = {45-66},
  year = {1999}
}

@inproceedings{Tu,
  title={Skein quantization of Poisson algebras of loops on surfaces},
  author={Turaev, Vladimir G},
  booktitle={Annales scientifiques de l'Ecole normale sup{\'e}rieure},
  volume={24},
  number={6},
  pages={635--704},
  year={1991}
}

@article{Bul,
  title={Rings of $SL_2(\mathbb{C})$-characters and the Kauffman bracket skein module},
  author={Bullock, Doug},
  journal={Commentarii Mathematici Helvetici},
  volume={72},
  number={4},
  pages={521--542},
  year={1997},
  publisher={Springer}
}

@article{BFK,
  title={Understanding the Kauffman bracket skein module},
  author={Bullock, Doug and Frohman, Charles and Kania-Bartoszy{\'n}ska, Joanna},
  journal={Journal of knot theory and its ramifications},
  volume={8},
  number={03},
  pages={265--277},
  year={1999},
  publisher={World Scientific}
}

@article{PS,
  title={On Skein Algebras And $SL_2(\mathbb{C})$-Character Varieties},
  author={Przytycki, J{\'o}zef H and Sikora, Adam S},
  journal = {Topology},
  volume = {39},
  number = {1},
  pages = {115-148},
  year = {2000}
}

@article{BWqtr,
  title={Quantum traces for representations of surface groups in $SL_2(\mathbb{C})$},
  author={Bonahon, Francis and Wong, Helen},
  journal={Geometry \& Topology},
  volume={15},
  number={3},
  pages={1569--1615},
  year={2011},
  publisher={Mathematical Sciences Publishers}
}

@article{Mu,
  title={Skein and cluster algebras of marked surfaces},
  author={Muller, Greg},
  journal={Quantum topology},
  volume={7},
  number={3},
  pages={435--503},
  year={2016}
}

@article{CL,
  title={Stated skein algebras of surfaces},
  author={Costantino, Francesco and L{\^e}, Thang TQ},
  journal={Journal of the European Mathematical Society},
  volume={24},
  number={12},
  pages={4063--4142},
  year={2022}
}

@article{Fa,
  title={Holonomy and (stated) skein algebras in combinatorial quantization},
  author={Faitg, Matthieu},
  journal={arXiv preprint arXiv:2003.08992},
  year={2020}
}

@article{Kor,
  title={Finite presentations for stated skein algebras and lattice gauge field theory},
  author={Korinman, Julien},
  journal={Algebraic \& Geometric Topology},
  volume={23},
  number={3},
  pages={1249--1302},
  year={2023},
  publisher={Mathematical Sciences Publishers}
}

@article{KQ,
  title={Classical shadows of stated skein representations at roots of unity},
  author={Korinman, Julien and Quesney, Alexandre},
  journal={arXiv preprint arXiv:1905.03441},
  year={2019}
}

@article{BWi,
  title={Representations of the Kauffman bracket skein algebra I: invariants and miraculous cancellations},
  author={Bonahon, Francis and Wong, Helen},
  journal={Inventiones mathematicae},
  volume={204},
  pages={195--243},
  year={2016},
  publisher={Springer}
}

@article{BWii,
  title={Representations of the Kauffman bracket skein algebra, II: Punctured surfaces},
  author={Bonahon, Francis and Wong, Helen},
  journal={Algebraic \& geometric topology},
  volume={17},
  number={6},
  pages={3399--3434},
  year={2017},
  publisher={Mathematical Sciences Publishers}
}

@article{BWiii,
  title={Representations of the Kauffman bracket skein algebra III: closed surfaces and naturality},
  author={Bonahon, Francis and Wong, Helen},
  journal={Quantum Topology},
  volume={10},
  number={2},
  pages={325--398},
  year={2019}
}

@article{KK,
  title={Classification of semi-weight representations of reduced stated skein algebras},
  author={Karuo, Hiroaki and Korinman, Julien},
  journal={arXiv preprint arXiv:2303.09433},
  year={2023}
}

@article{LYSurvey,
  title={Stated skein modules of marked 3-manifolds/surfaces, a survey},
  author={L{\^e}, Thang TQ and Yu, Tao},
  journal={Acta Mathematica Vietnamica},
  volume={46},
  number={2},
  pages={265--287},
  year={2021},
  publisher={Springer}
}

@article{LSStated,
  title={Stated SL (n)-skein modules and algebras},
  author={L{\^e}, Thang TQ and Sikora, Adam S},
  journal={arXiv preprint arXiv:2201.00045},
  year={2021}
}

@article{Hig,
  title={Triangular decomposition of $SL_3$ skein algebras},
  author={Higgins, Vijay},
  journal={Quantum Topology},
  volume={14},
  number={1},
  pages={1--63},
  year={2023}
}

@article{Wang,
  title={On stated $ SL (n) $-skein modules},
  author={Wang, Zhihao},
  journal={arXiv preprint arXiv:2307.10288},
  year={2023}
}

@book{BGQGroup,
  title={Lectures on algebraic quantum groups},
  author={Brown, Ken and Goodearl, Ken R},
  year={2012},
  publisher={Birkh{\"a}user}
}

@article{DL,
  title = {Quantum Function Algebra at Roots of 1},
  journal = {Advances in Mathematics},
  volume = {108},
  number = {2},
  pages = {205--262},
  year = {1994},
  author = {C. Deconcini and V. Lyubashenko}
}

@article{BY,
  title={Azumaya loci and discriminant ideals of PI algebras},
  author={Brown, Ken A and Yakimov, Milen T},
  journal={Advances in Mathematics},
  volume={340},
  pages={1219--1255},
  year={2018},
  publisher={Elsevier}
}

@inbook{DP,
  title={Quantum groups},
  booktitle={D-modules, representation theory, and quantum groups},
  author={De Concini, C and Procesi, C},
  pages={31--140},
  year={1992}
}

\end{document}